\def\C{{\mathbf C}}
\def\R{{\mathbf R}}
\def\Q{{\mathbf Q}}
\def\A{{\mathbf A}}
\newcommand{\subalign}[1]{%
  \vcenter{%
    \Let@ \restore@math@cr \default@tag
    \baselineskip\fontdimen10 \scriptfont\tw@
    \advance\baselineskip\fontdimen12 \scriptfont\tw@
    \lineskip\thr@@\fontdimen8 \scriptfont\thr@@
    \lineskiplimit\lineskip
    \ialign{\hfil$\m@th\scriptstyle##$&$\m@th\scriptstyle{}##$\crcr
      #1\crcr
    }%
  }
}
\newtheorem{theorem}{Theorem}[section]
\newtheorem{thm}[theorem]{Theorem}
\newtheorem{lemma}[theorem]{Lemma}
\newtheorem{proposition}[theorem]{Proposition}
\newtheorem{prop}[theorem]{Proposition}
\newtheorem{claim}[theorem]{Claim}
\theoremstyle{definition}
\theoremstyle{remark}
\DeclareMathOperator{\diag}{diag}
\renewcommand{\hom}{\operatorname{Hom}}
\renewcommand{\(}{\left(}
\renewcommand{\)}{\right)}
\newcommand{\set}[1]{\left\lbrace#1\right\rbrace}
\newcommand{\floor}[1]{\left\lfloor#1\right\rfloor}
\newcommand{\gen}[1]{{\prec}#1{\succ}}
\newcommand{\ol}[1]{\overline{#1}}
\newcommand{\mm}[4]{\(\begin{smallmatrix} #1 & #2\\ #3 & #4\end{smallmatrix}\)}
\DeclareMathOperator{\ind}{Ind}
\DeclareMathOperator{\Spin}{Spin}
\DeclareMathOperator{\Sp}{Sp}
\DeclareMathOperator{\GSp}{GSp}
\DeclareMathOperator{\PGSp}{PGSp}
\DeclareMathOperator{\SL}{SL}
\DeclareMathOperator{\GL}{GL}
\DeclareMathOperator{\PGL}{PGL}
\DeclareMathOperator{\ord}{ord}
\newcommand{\inj}{\hookrightarrow}
\begin{document}
\title{A multivariate integral representation on $\GL_2 \times \GSp_4$ inspired by the pullback formula}

\author{Aaron Pollack}
\address{Department of Mathematics, Institute for Advanced Study, Princeton, NJ 08540, USA}\email{aaronjp@math.ias.edu}
\author{Shrenik Shah}
\address{Department of Mathematics, Columbia University, New York, NY 10027, USA}\email{snshah@math.columbia.edu}

\thanks{A.P.\ has been supported by NSF grant DMS-1401858.  S.S.\ has been supported by NSF grant DMS-1401967.}

\begin{abstract}
We give a two variable Rankin-Selberg integral inspired by consideration of Garrett's pullback formula.  For a globally generic cusp form on $\GL_2\times \GSp_4$, the integral represents the product of the $\mathrm{Std}\times \mathrm{Spin}$ and $\mathbf{1} \times \mathrm{Std}$ $L$-functions.  We prove a result concerning an Archimedean principal series representation in order to verify a case of Jiang's first-term identity relating certain non-Siegel Eisenstein series on symplectic groups.  Using it, we obtain a new proof of a known result concerning possible poles of these $L$-functions.
\end{abstract}

\maketitle

\section{Introduction}

Bump, Friedberg, and Ginzburg \cite{bfg2} give three multivariate integrals on $\GSp_4$, $\GSp_6$, and $\GSp_8$ that each represent the product of the Standard and Spin $L$-functions on the group.  There are also works of a similar flavor by Bump-Friedberg \cite{bumpFriedberg} on $\GL_n$ and by Ginzburg-Hundley \cite{ginzh} on certain orthogonal groups.  We produce a similar construction for the product $\GL_2 \times \GSp_4$.  Multivariate integral representations on products of groups are scarce in the literature: there is the integral of Hundley and Shen \cite{hs}, which represents the product of two $\GL_2$-twisted Spin $L$-functions on $\GSp_4$, as well as a very general construction of Jiang and Zhang \cite{jz} that also considers varying $\GL_n$-twists of a fixed representation.

For a precise definition of the main integral and relevant groups, embeddings, and Eisenstein series, refer to Section \ref{subsec:global} below.  Briefly, the integral is as follows.  Define an embedding of $G=\GL_2 \boxtimes \GSp_4$ into $\GSp_6$ (where $\boxtimes$ indicates a matching similitude condition).  Let $V$ be the symplectic space of rank 6 upon which $\GSp_6$ acts.  The normalized Eisenstein series $E^*(g,s,w)$ is a function of $g \in \GSp_6$ as well as two complex parameters $s,w$.  It is associated to a non-maximal parabolic stabilizing a partial flag consisting of a two-dimensional isotropic subspace inside a three-dimensional isotropic subspace of $V$.  If $\pi$ is a generic cuspidal automorphic representation of $G$ with trivial central character, and $\phi$ is a cusp form in the space of $\pi$, we define
\[I^*(\phi,s,w) = \int_{G(\mathbf{Q})Z(\mathbf{A}) \backslash G(\mathbf{A})} E^*(g,s,w) \phi(g)dg.\]
The following is a rough statement of our main result, which is Theorem \ref{thm:mainthm} below.
\begin{thm}\label{thmA:intro} We have
\[I^*(\phi,s,w) {=}_S L(s,\pi,\mathrm{Std}\times \mathrm{Spin})L(w,\pi_2,\mathrm{Std}),\]
where $=_S$ indicates equality away from ramified places.
\end{thm}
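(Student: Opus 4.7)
The plan is to follow the standard Rankin--Selberg unfolding strategy, adapted to the two-variable Eisenstein series $E^*$. Let $P \subset \GSp_6$ be the parabolic stabilizing the partial flag used to define $E^*$, and, in its region of absolute convergence, write $E^*(g,s,w) = \sum_{\gamma \in P(\Q) \backslash \GSp_6(\Q)} \Phi(\gamma g,s,w)$ for a suitable normalized flat section $\Phi$. Substituting into $I^*(\phi,s,w)$ and interchanging summation with integration yields a sum of orbital integrals indexed by the double cosets $P(\Q) \backslash \GSp_6(\Q) / G(\Q)$. The first step is therefore to enumerate these double cosets via an orbit analysis of $G$ on the variety of partial flags $W_2 \subset W_3 \subset V$ (with $W_i$ isotropic of dimension $i$), to describe their stabilizers, and to show that every non-generic orbit contributes zero --- typically because the corresponding stabilizer leaves a full unipotent subgroup along which $\phi$ has vanishing period by cuspidality.

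Once only the ``open'' double coset survives, the second step exploits the global genericity of $\pi$. Fourier-expanding $\phi$ along an appropriate unipotent subgroup $U \subset G$ with respect to a generic character $\psi_U$ should collapse the remaining integral to a Whittaker-type integral against the translated section $\Phi(\gamma_0 \cdot, s,w)$, with $\phi$ replaced by its $(U,\psi_U)$-Whittaker function $W_\phi$. As $W_\phi$ factors over places and $\Phi$ is chosen as a pure tensor, the resulting integral is Eulerian, so the global identity reduces to its local analogue at almost all places.

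The final and main step is the unramified local computation. At each place $v \notin S$ one inserts the Casselman--Shalika formula for the spherical Whittaker function of the unramified generic representation $\pi_{1,v} \otimes \pi_{2,v}$ of $\GL_2(\Q_v) \times \GSp_4(\Q_v)$, and integrates against the spherical section $\Phi_v(\cdot,s,w)$. The output should be a geometric series in the Satake parameters, indexed by a cone of dominant weights corresponding to the two Iwasawa parameters $(s,w)$, and this series must be recognized as $L_v(s,\pi_v,\mathrm{Std} \times \mathrm{Spin})\, L_v(w,\pi_{2,v},\mathrm{Std})$. This identification is the principal obstacle: the two-variable nature of the integral together with the presence of the ``large'' tensor factor $\mathrm{Std} \times \mathrm{Spin}$ makes the underlying Weyl-type combinatorial identity substantially more intricate than in the one-variable analogues of Bump--Friedberg \cite{bumpFriedberg} or the $\GSp_{2n}$ integrals of Bump--Friedberg--Ginzburg \cite{bfg2}, and is likely to require either a reduction to a known identity on a smaller group or an explicit manipulation of symmetric-function identities.
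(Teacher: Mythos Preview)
Your outline is correct and matches the paper's approach essentially step for step: the paper carries out exactly the orbit analysis you describe (finding five $G(\Q)$-orbits on the flag variety, with four killed by cuspidality), then unfolds the open orbit via successive Fourier/Whittaker expansions on $\phi_1$ and $\phi_2$ to reach an Eulerian integral, and finally computes the unramified local factor. For the last step, the paper chooses the second of your two suggested routes---an explicit symmetric-function manipulation---expanding both sides in characters of $\SL_2(\C)\times\Spin_5(\C)$ via a Pieri rule for $B_2$ and then matching the multiplicity functions term by term; no reduction to a smaller group is used.
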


\subsection{Heuristic analysis of $I^*(\phi,s,w)$} The integral $I^*(g,s,w)$ and its relation to the $L$-functions $L(s,\pi,\mathrm{Std}\times \mathrm{Spin})L(w,\pi_2,\mathrm{Std})$ can be heuristically motivated by appealing to Garrett's pullback formula \cite{garrett,garrett2}, as follows.  Consider the embedding
\begin{equation} \label{eqn:introsiegel} \underbrace{\GSp_2}_{E(h,s)} \boxtimes \underbrace{\GSp_2}_{\phi_1} \boxtimes \underbrace{\GSp_4}_{\phi_2} \inj \underbrace{\GSp_8,}_{E_{\mathrm{Siegel}}(g,w)}\end{equation}
where $\boxtimes$ denotes the subgroup of the product where the elements share the same similitude, $E(h,s)$ is the usual real analytic Eisenstein series on $\GSp_2=\GL_2$, $E_{\mathrm{Siegel}}(g,w)$ is the usual Siegel Eisenstein series on $\GSp_8$, and $\phi_1 \in \pi_1$ and $\phi_2 \in \pi_2$ are cusp forms in generic cuspidal automorphic representations on $\GSp_2$ and $\GSp_4$, respectively.  Consider the integral of the pullback of the Siegel Eisenstein series $E_{\mathrm{Siegel}}(g,w)$ against the three functions on the embedded groups.  

Due to the presence of the Eisenstein series $E(h,s)$, such an integral does not converge.  However, we may heuristically connect (\ref{eqn:introsiegel}) to a product of $L$-functions as follows.
\begin{enumerate}
	\item The ``doubling'' case of the pullback formula for
	\begin{equation} \label{eqn:introdoub} \GSp_4 \boxtimes \underbrace{\GSp_4}_{\phi_2} \inj \underbrace{\GSp_8}_{E_{\mathrm{Siegel}}(g,w)}\end{equation}
	indicates that pulling back $E_{\mathrm{Siegel}}$ to the product group, then integrating against $\phi_2$ only on the second factor will yield $L(\pi_2,\mathrm{Std},w)$ multiplied by an automorphic form $\widetilde{\phi}_2$ on the first $\GSp_4$ factor.  Furthermore, the automorphic form $\widetilde{\phi}_2$ is essentially the cusp form $\phi_2$, now considered on the first $\GSp_4$ factor.
	\item We embed $\underbrace{\GSp_2}_{E(h,s)} \boxtimes \underbrace{\GSp_2}_{\phi_1}$ into the first $\GSp_4$ factor in (\ref{eqn:introdoub}) and consider its integral against the restriction of $\widetilde{\phi}_2$.  This is the integral representation of Novodvorsky \cite{novodvorsky} for $L(\pi_1 \times \pi_2, \mathrm{Std} \times \mathrm{Spin},s)$.
\end{enumerate}
This suggests that when the complex variables are suitably normalized and convergence issues are ignored, the integral suggested by (\ref{eqn:introsiegel}) should yield the product $L(\pi_2,\mathrm{Std},w)L(\pi_1 \times \pi_2, \mathrm{Std} \times \mathrm{Spin},s)$.

Moreover, we may apply the pullback formula to (\ref{eqn:introsiegel}) in a different way to arrive at the integral $I^*(\phi,s,w)$.  To see this, consider the embedding
\begin{equation}\label{newPullback}\underbrace{\GSp_2}_{E(h,s)} \boxtimes \GSp_6 \rightarrow \underbrace{\GSp_8}_{E_{\mathrm{Siegel}}(g,w)}.\end{equation}
Ignoring issues of convergence and cuspidality, Garrett's pullback formula suggests that if one were to integrate out the Eisenstein series $E(h,s)$ in (\ref{newPullback}) to obtain an automorphic form on $\GSp_6$, this divergent integral would be replaced by the Eisenstein series on $\GSp_6$ corresponding to the parabolic of shape
\[ \( \begin{array}{ccc} m_1 & * & *\\  & m_2 & *\\ & & *\end{array}\)\]
in $2 \times 2$ block form and with (non-cuspidal) data given by the $\GSp_2$ Eisenstein series $E(h,s)$ on the block corresponding to $m_2$.  In other words, what should result is the two-variable Eisenstein series $E^*(g,s,w)$ used in $I^*(\phi,s,w)$.  We should then pull back to an embedded
\[\underbrace{\GSp_2}_{\phi_1} \boxtimes \underbrace{\GSp_4}_{\phi_2} \inj \GSp_6\]
and integrate.

Thus the integral suggested by (\ref{eqn:introsiegel}) is connected to both $L(\pi_2,\mathrm{Std},w)L(\pi_1 \times \pi_2, \mathrm{Std} \times \mathrm{Spin},s)$ and the integral $I^*(\phi,s,w)$.  In Theorem \ref{thmA:intro}, we directly relate $I^*(\phi,s,w)$ to the product of these two $L$-functions.

\subsection{A first term identity and application to the poles}

Under an assumption concerning Archimedean principal series, Jiang \cite{jiang2} proved a general first term identity relating the special values of the different Eisenstein series on symplectic groups.  This was inspired by the Siegel-Weil formula and subsequent work of Kudla and Rallis, both of which concern the Siegel Eisenstein series.  Jiang's thesis \cite{jiang} intriguingly applied a case of this first-term identity that related two \emph{non-Siegel} parabolic subgroups.  The appearance of the two-variable Eisenstein series $E^*(g,s,w)$ in $I^*(g,s,w)$ directly motivates the consideration of another first term identity between two non-Siegel parabolic subgroups, which is given in Proposition \ref{FTQ} below.  We give a simple but imprecise formulation here.
\begin{prop} \label{prop:introFT}
Let $Q$ be the Klingen parabolic stabilizing an isotropic line in the symplectic space $V$ and let $R$ be the parabolic subgroup stabilizing a two-dimensional isotropic subspace of $V$.  We write $E_Q^*(g,s)$ and $E_R^*(g,w)$ for the normalized Eisenstein series on $\mathrm{Sp}_6(\mathbf{A})$ associated to these respective parabolics with degenerate data.  (See Section \ref{subsec:res} for precise definitions.)  Then for any choice of data in $E_R^*(g,w)$, there is a corresponding choice of data for $E_Q^*(g,s)$ so that we have an equality
\[\mathrm{Res}_{w=2} E_R^*(g,w) = E_{Q}^*(g,s)|_{s=3/2}\]
for all $g \in \Sp_6(\A)$.
\end{prop}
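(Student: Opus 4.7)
The plan is to establish Proposition \ref{prop:introFT} within Jiang's general framework \cite{jiang2} of first term identities between Eisenstein series on symplectic groups associated to different maximal parabolic subgroups. Jiang's theorem is conditional on an Archimedean hypothesis about the intertwining operator between the two degenerate principal series at the relevant special values, and the core of the argument consists in verifying this hypothesis for the case at hand.

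First, I would fix the normalizations. The Eisenstein series $E_R^*(g,w)$ is built from degenerate data $|\det|^w \otimes \mathbf{1}$ on the Levi $\GL_2 \times \Sp_2$ of $R$, and $E_Q^*(g,s)$ from $|\cdot|^s \otimes \mathbf{1}$ on the Levi $\GL_1 \times \Sp_4$ of $Q$, both normalized by products of completed $\zeta$-factors chosen so that the standard intertwining operators send unramified sections to unramified sections with rational coefficients (the Langlands--Shahidi normalization). A direct computation of the leading $\zeta$-factors then shows that $E_R^*$ acquires a simple pole at $w=2$ coming from a specific $\zeta$-factor, while $E_Q^*$ remains regular and non-zero at $s=3/2$.

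The key geometric ingredient is a long Weyl element $w_0$ with associated intertwining operator $M(w_0, w)$ from the $R$-induced degenerate principal series to the $Q$-induced degenerate principal series. At the non-Archimedean places, $M(w_0, \cdot)$ has a pole at $w=2$ whose residue defines an explicit correspondence between $R$-data and $Q$-data; at an unramified place this correspondence sends the spherical vector to the spherical vector up to a rational factor determined by the $\zeta$-factors above. The global identity in Proposition \ref{prop:introFT} then follows from Jiang's argument, which realizes both $\mathrm{Res}_{w=2} E_R^*$ and $E_Q^*|_{s=3/2}$ inside a common residual automorphic representation of $\Sp_6(\A)$ and matches them via constant term computations along a sequence of parabolics of $\Sp_6$.

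The main obstacle is the Archimedean step: the residual intertwiner at $w=2$ must be shown to be holomorphic and non-zero at the real place on a suitable explicit vector. This reduces to a concrete analysis of an Archimedean principal series of $\Sp_6(\R)$, which the abstract indicates is the subject of a separate result in the paper. Granting that input, Jiang's general first term identity applies and delivers the claimed equality for the explicit choice of data on $E_Q^*$ obtained from $E_R^*$ via the residual intertwiner.
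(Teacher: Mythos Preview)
Your overall strategy matches the paper's: invoke Jiang's general first-term identity \cite[Proposition 3.1]{jiang2} for the case $n=3$, $r=2$, $\ell=1$, and reduce the proof to verifying the Archimedean hypothesis that Jiang leaves as an assumption. This is exactly what the paper does in Proposition~\ref{FTQ}.

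However, you have mischaracterized the Archimedean hypothesis. It is \emph{not} that ``the residual intertwiner at $w=2$ must be shown to be holomorphic and non-zero at the real place on a suitable explicit vector.'' Rather, Jiang's \cite[Assumption/Proposition]{jiang2} is a structural statement about the real degenerate principal series $I_R(s)$ itself: for $s>\tfrac{1}{2}$, $I_R(s)$ has a unique irreducible quotient and is generated by its spherical vector. This is what the paper establishes as Proposition~\ref{prop:assume}, and the proof (following Jiang's thesis \cite[\S4]{jiang}) goes through Bruhat theory to bound the intertwining number $\dim_{\mathbf C}\hom_G(I_{R,\infty}^{\mathrm{sm}}(s),I_{R,\infty}^{\mathrm{sm}}(-s))$ by $1$, together with a contragredient argument via Cartan subgroup analysis. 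Your description of the global mechanism (matching constant terms inside a residual automorphic representation) is a reasonable gloss on Jiang's argument, but the local input you identify is not the one actually needed or proved.
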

We emphasize that the preceding identity is a result of Jiang \cite[Proposition 3.1]{jiang2} assuming only an Archimedean hypothesis, which we verify in Proposition \ref{prop:assume} below, following a method also due to Jiang \cite[\S 4]{jiang}.  This identity can then be applied to the aforementioned integral representation to analyze when poles of the $L$-functions $L(\pi_2,\mathrm{Std},w)$ and $L(\pi_1 \times \pi_2, \mathrm{Std} \times \mathrm{Spin},s)$ can simultaneously exist.  This analysis is similar to the argument of Bump, Friedberg, and Ginzburg \cite{bfg2}, who determined whether poles can simultaneously exist for the Standard and Spin $L$-functions on $\GSp_4$ and $\GSp_6$ using the Siegel-Weil formula of Ikeda \cite{ikeda} and Kudla-Rallis-Soudry \cite{kr,krs}.  It appears that aside from Jiang's thesis \cite{jiang}, Proposition \ref{propB:intro} below is the only other application of an identity relating values of non-Siegel parabolics in the literature.

We also carry out a nonvanishing computation at the Archimedean place and a ramified calculation for our construction.  The analysis is intricate due to the presence of an Eisenstein series associated to a non-maximal parabolic.  The following is an imprecise version of Proposition \ref{prop:ramcalc}.
\begin{prop} \label{prop:ramcalcIntro}
For any finite place $v$, the data in the local version of the unfolded integral may be chosen in order to make the integral constant and non-zero.  If $v$ is Archimedean, the local integral has meromorphic continuation in $s$ and $w$.  Moreover, if $s_0,w_0 \in \mathbf{C}$ are given, it is possible to choose the data so that the local integral is nonvanishing at the point $(s_0,w_0)$.
\end{prop}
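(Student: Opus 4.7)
\bigskip

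\noindent\textbf{Proof proposal.} I would handle the three assertions separately, since the finite-place and Archimedean analyses require quite different techniques. Throughout, I assume the global integral has already been unfolded to an Eulerian product, so at each place $v$ one is analyzing a local integral whose integrand is (roughly) the product of a local Whittaker function for $\phi$ and a section $f_{s,w}^v$ of the degenerate induced representation determining $E^*$.

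For a finite place $v$, the plan is a standard Gelfand--Kazhdan--type bump function argument. I would first identify the open cell of the flag variety on which the unfolded integral is concentrated and exhibit a point $g_0$ in this cell at which the local Whittaker function is nonzero. I would then choose the local section $f_{s,w}^v$ to be supported inside a small open compact neighborhood of $g_0$ modulo the parabolic, given by the restriction of the standard section normalized at $g_0$. On such a neighborhood the modular character carrying the $(s,w)$-dependence is locally constant, so the dependence on $(s,w)$ disappears and the integral reduces to a nonzero scalar times the value of the Whittaker function, which can be arranged to be nonzero by translation.

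At the Archimedean place the two assertions are meromorphic continuation and nonvanishing. For meromorphic continuation, my plan is to decompose the integration domain via an iterated Iwasawa decomposition adapted to the two steps in the flag stabilized by the parabolic for $E^*$: a compact integration (which is harmless) times an integration over two torus directions carrying the $s$ and $w$ parameters. On the torus factor the integrand is a Schwartz-class function (obtained from the rapid decay of the Whittaker function) multiplied by an explicit character involving $s$ and $w$, so the integral is a Mellin transform that converges in a right half-plane and admits meromorphic continuation in $(s,w)$ by standard Tate-type reasoning; alternatively, one invokes Bernstein's theorem on meromorphic continuation of zeta integrals attached to holomorphic families of distributions on algebraic varieties, with the requisite $b$-function coming from the structure of the induced representation.

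For nonvanishing at a prescribed $(s_0,w_0)$, the plan is to exhibit enough freedom in the choice of data to make the bilinear pairing $(\phi_\infty, f_{s,w}^\infty) \mapsto I^*_\infty(\phi,s,w)$ nonzero at $(s_0,w_0)$. Concretely, I would first multiply the inducing section by a Schwartz function in the torus variables so that the inner Mellin transform becomes, up to a holomorphic nonvanishing factor, any prescribed Paley--Wiener function of $(s,w)$; this is arranged by choosing the $K_\infty$-finite vector appropriately on the compact direction and a bump function on the noncompact direction. I expect the main obstacle will be this last point: because $E^*$ is associated to a non-maximal parabolic, the two Mellin variables are genuinely coupled through the geometry of the unfolded integration domain, and one must verify that the resulting two-dimensional Mellin transform can be made not to vanish at an arbitrary prescribed point. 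I would address this by reducing, via a further Iwasawa decomposition on the Levi of the non-maximal parabolic, to a product of one-variable Mellin transforms whose individual nonvanishing at any prescribed point is classical.
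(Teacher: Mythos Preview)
Your finite-place sketch is reasonable in spirit and would likely work, but the paper takes a rather different and more structured route, and the difference matters for the Archimedean case.

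The paper does not use a bump section on the flag variety directly. Instead it builds the section as a product $f_{(3,3)}(g,s,\Phi_{14})\,f_{(2,2,2)}(g,w-2s,\Phi_{12})$ of Godement--Jacquet--type sections attached to Schwartz--Bruhat functions on two explicit representations $W_{14}$ and $W_{12}$ of $\GL_2\times\GSp_6$. A stabilizer computation then collapses the local integral to
\[
I(W,f',s,w)=\int_{S_5'\backslash \GL_2\times H}|\mu(g)\det(m)^{-1}|^{w-2s}\,\Phi_{26}(v_0(m,g))\,|\mu(g)|^{s}\,I'(v_1,v_2,s;g)\,dg\,dm,
\]
where $I'$ is an integral over a subgroup $S_5'\subseteq\mathrm{Stab}_5$ depending only on $s$. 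At a finite place one then chooses $\Phi_{26}$ with very small support to reduce to $I'$, and $I'$ is made constant by a separate small-support argument inside $S_5'$. The payoff of this construction is that the $w$-dependence has been completely isolated into the Schwartz integration, which is the decoupling you were hoping for; but it comes from the specific section, not from an Iwasawa decomposition on the Levi.

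For the Archimedean place the paper's argument diverges sharply from yours. Meromorphic continuation is not obtained by any local Mellin or Bernstein-type analysis at all: once the finite places are controlled, the paper simply invokes meromorphic continuation of the global Rankin--Selberg integral and of the partial $L$-functions to force meromorphic continuation of the remaining Archimedean factor. For nonvanishing, the key step is to relate the auxiliary integral $I'(v_1,v_2,s)$ to Novodvorsky's local $\GL_2\times\GSp_4$ integral $J$: via Dixmier--Malliavin one writes $I'$ as a finite sum of $J$-integrals, and conversely $J$ is an integral of $I'$ against a Schwartz function. Nonvanishing of $J$ at any $s_0$ is then imported from Soudry and from classical $\GL_2\times\GL_2$ Rankin--Selberg, and the freedom in $\Phi_{26}$ handles the $w$-variable.

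Your proposed endgame---``reduce via a further Iwasawa decomposition on the Levi to a product of one-variable Mellin transforms''---is where I see a genuine gap. The unfolded integrand involves the full $\GSp_4$-Whittaker function, which does not factor along any torus splitting adapted to the $(s,w)$-variables, so there is no evident product structure to exploit. The paper's decoupling succeeds precisely because the special section \emph{forces} the $w$-dependence into a separate Schwartz integration, and the residual $s$-integral is then identified with a known object. Absent that device, I do not see how your Mellin argument produces nonvanishing at an arbitrary $(s_0,w_0)$.
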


Write the automorphic representation $\pi$ as the tensor product $\pi_1 \times \pi_2$ on $\GL_2 \times \GSp_4$.  Piatetski-Shapiro and Rallis \cite{psr} have shown that the Standard $L$-function of $\pi_2$ has possible simple poles only at $w \in \set{-1,0,1,2}$.  (By the functional equation, it is enough to study the cases $w \in \set{1,2}$.)  Soudry \cite{soudry} has completely characterized the existence of the pole $w=2$, while Kudla, Rallis, and Soudry \cite{krs} have done the same for the pole $w=1$.  On the other hand, it follows from the integral of Novodvorsky \cite{novodvorsky} that $L(s,\pi,\mathrm{Std} \times \mathrm{Spin})$ can have only a simple pole at $s \in \set{0,1}$.  Piatetski-Shapiro and Soudry \cite[Theorem 1.3]{pss} have given a necessary and sufficient criterion for the existence of such a pole in terms of the theta lifting from split $\mathrm{GO}(4)$.   We may deduce the following consequence from Proposition \ref{prop:introFT}, the ramified calculation Proposition \ref{prop:ramcalcIntro}, and the integral representation.
\begin{prop}\label{propB:intro}
Suppose that $\pi$ is a generic cuspidal automorphic represention of $G$, and fix a finite set $S$ of places including the Archimedean place and every place of ramification of $\pi$.  If the partial $L$-function $L^S(w,\pi_2,\mathrm{Std})$ has a pole at $w=2$, then $L^S(s,\pi,\mathrm{Std} \times \mathrm{Spin})$ cannot have a pole at $s=1$.
\end{prop}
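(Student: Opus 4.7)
The approach is by contradiction, combining the main integral representation with the first-term identity. Suppose both that $L^S(w,\pi_2,\mathrm{Std})$ has a pole at $w=2$ and that $L^S(s,\pi,\mathrm{Std}\times\mathrm{Spin})$ has a pole at $s=1$. Combining Theorem \ref{thmA:intro} with the local nonvanishing assertion of Proposition \ref{prop:ramcalcIntro}, I would first select a cusp form $\phi \in \pi$ and local data for $E^*(g,s,w)$ at each $v \in S$ so that the local integrals appearing in the unfolded expression of $I^*$ are simultaneously nonvanishing at $(s_0,w_0) = (1,2)$. It follows that $I^*(\phi,s,w)$ has a simple pole in each variable along the divisors $\{s=1\}$ and $\{w=2\}$; in particular, $\mathrm{Res}_{w=2}\,I^*(\phi,s,w)$ is a meromorphic function of $s$ which has a nonzero simple pole at $s=1$.

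On the other hand, one can compute $\mathrm{Res}_{w=2}\,I^*(\phi,s,w)$ by pushing the residue inside the absolutely convergent integral (valid since $\phi$ is cuspidal):
\[\mathrm{Res}_{w=2}\,I^*(\phi,s,w) = \int_{G(\mathbf{Q})Z(\mathbf{A}) \backslash G(\mathbf{A})} \mathrm{Res}_{w=2}\,E^*(g,s,w)\,\phi(g)\,dg.\]
The two-variable Eisenstein series $E^*(g,s,w)$ is attached to a parabolic contained in both the Klingen parabolic $Q$ and the parabolic $R$ of Proposition \ref{prop:introFT}. Its residue at $w=2$ can therefore be analyzed by lifting the first-term identity to the two-variable setting: $\mathrm{Res}_{w=2}\,E^*(g,s,w)$ is identified with (a section of) a Klingen-type Eisenstein series on $\GSp_6$ attached to $Q$, carrying an $s$-dependent inducing section. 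Substituting yields $\mathrm{Res}_{w=2}\,I^*(\phi,s,w) = J^*(\phi,s)$, where $J^*(\phi,s)$ is a new global integral of $\phi$ against a Klingen Eisenstein series.

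The final step is to show that $J^*(\phi,s)$ is holomorphic at $s=1$, which produces the desired contradiction. This requires a second unfolding of $J^*(\phi,s)$, now using the orbit structure of $G$ on $Q \backslash \GSp_6$ rather than the partial flag variety used for $I^*$; one identifies the stabilizers and the resulting unipotent Fourier coefficient of $\phi$ and verifies that the unramified local factors involve only $L$-functions holomorphic at $s=1$, in particular avoiding the $\mathrm{Std}\times\mathrm{Spin}$ factor responsible for the assumed pole. The main obstacle is this unfolding: the Klingen section produced by the first-term identity is of a specific non-standard type, so the orbit analysis and Euler factorization must be carried out carefully, and one must also verify that the ramified data chosen in the first step does not introduce compensating local poles at $s=1$. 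Executing this step will follow the general pattern developed by Bump, Friedberg, and Ginzburg \cite{bfg2} for analogous pole-comparison results on $\GSp_4$ and $\GSp_6$.
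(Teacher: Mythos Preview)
Your overall strategy---contradiction, choose good local data, apply the first-term identity, then unfold against a Klingen Eisenstein series---matches the paper. But the execution of the last two steps is off in a way that matters.

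First, the order of residues is reversed. The first-term identity (Proposition~\ref{prop:introFT}) relates $\mathrm{Res}_{w=2}$ of the \emph{one-variable} $R$-Eisenstein series to a value of the $Q$-Eisenstein series. To get an $R$-Eisenstein series from the two-variable $E^*(g,s,w)$, one must first take $\mathrm{Res}_{s=1}$: by the inner $\GL_2$ Eisenstein series having constant residue, $\mathrm{Res}_{s=1}E(g,s,w)$ is an Eisenstein series for $R$ in the variable $w$ (Lemma~\ref{s=1:lemma}). Only then does Jiang's identity apply, giving $\mathrm{Res}_{w=2}\mathrm{Res}_{s=1}E(g,f_{s,w}) = E_Q(g,\varphi_{s'})|_{s'=3/2}$. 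There is no ``lift of the first-term identity to the two-variable setting'' that produces an $s$-dependent Klingen Eisenstein series from $\mathrm{Res}_{w=2}$ alone; your $J^*(\phi,s)$ as described does not arise.

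Second, and as a consequence, the final step is much simpler than you anticipate. After both residues, one is integrating $\phi_1\phi_2$ against $E_Q(g,\varphi_{s'})$ at a \emph{fixed} value, and the claim is that this integral vanishes identically. The unfolding over $Q\backslash\GSp_6$ has three $H$-orbits of isotropic lines, represented by $\gen{f_1}$, $\gen{f_2}$, $\gen{f_1+f_2}$; each stabilizer contains a unipotent radical over which $\phi_2$ integrates to zero by cuspidality. No unramified computation, no ``$L$-functions holomorphic at $s=1$,'' no worry about ramified local poles---the integral is zero on the nose. Your proposed step of verifying holomorphy of local Euler factors is both unnecessary and, given that the object you would be unfolding does not actually exist in the form you describe, not well-posed.
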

By comparing the conditions in \cite{krs} and \cite{pss} (both of which characterize the poles in terms of liftings from split $\mathrm{GO}(4)$), one sees that $L(w, \pi_2, \mathrm{Std})$ can have a pole at $w=1$ while $L(s,\pi,\mathrm{Std} \times \mathrm{Spin})$ has a pole at $s=1$ (for a suitable choice of $\pi_1$), so the above result cannot be extended to $(w,s) = (1,1)$.

Proposition \ref{propB:intro} is strictly weaker than one should expect, for the following reason.  Soudry \cite[Theorem A]{soudry} shows that if $L(w,\pi_2,\mathrm{Std})$ has a pole at $w=2$, then $\pi_2$ is a CAP form.  It is a conjecture that such forms are never generic, which can perhaps be deduced from work on the Arthur conjectures in this case, though a more direct argument is possible.  We check below in Proposition \ref{prop:nopole} that the non-existence of a pole of $L(w,\pi_2,\mathrm{Std})$ at $w=2$ follows from the two-variable integral representation of Bump, Friedberg, and Ginzberg \cite{bfg2}.

We now give the layout of the paper.  We unfold the integral $I^*(\phi,s,w)$ in Section \ref{sec:unfolding} and complete the unramified calculation in Section \ref{sec:unramified}.  The first term identity, ramified calculations, and analysis of the poles are given in Section \ref{sec:poles}.

{\bf Acknowledgements}: We thank Solomon Friedberg, Michael Harris, and Christopher Skinner for helpful discussions.  We would also like to thank the anonymous referee for suggestions that have improved the writing of this paper.

\section{Unfolding} \label{sec:unfolding}

We precisely define the global integral $I^*(\phi,s,w)$ of the introduction in Section \ref{subsec:global} below.  We then unfold it to a factorizable form in Section \ref{subsec:unfolding}.  We perform an inner unipotent integration of the local factors in Section \ref{subsec:unipotent}, which will prepare us to conduct an unramified computation in Section \ref{sec:unramified}.

\subsection{The global integral} \label{subsec:global}

Let $W_{2n}$ be a $2n$-dimensional vector space over $\mathbf{Q}$.  We fix an ordered basis $\set{e_1,\dots,e_n,f_n,\dots,f_1}$ for $W_{2n}$, which will be used when writing matrices of linear operators on $W_{2n}$.  We define a symplectic form $\langle \; ,\; \rangle$ on $W_{2n}$ by
\begin{equation}\label{eqn:defsymp} \langle e_i,f_j \rangle = \delta_{ij}\textrm{ and }\langle e_i,e_j\rangle  = \langle f_i,f_j\rangle = 0,\end{equation}
where $\delta_{ij}$ is the Kronecker delta function.  Define an algebraic group $\GSp_{2n}$ over $\mathbf{Q}$ by
\begin{align*}
  \GSp_{2n}(R) = & \left\lbrace M \in \GL_{2n}(R): \textrm{there exists }\mu(M) \in R^\times\right.\\
	&\left.\textrm{with }\langle Mv,Mw \rangle = \mu(M)\langle v,w\rangle \textrm{ for all }v,w \in W_{2n}\right\rbrace
\end{align*}
for $\mathbf{Q}$-algebras $R$.  This also defines the similitude homomorphism $\mu: \GSp_{2n} \rightarrow \mathbf{G}_m$.  We regard $\GSp_{2n}$ as acting on the right of $W_{2n}$.

For $m,n \in \mathbf{Z}_{>0}$ we define the algebraic group $\GSp_{2m} \boxtimes \GSp_{2n}$ over $\mathbf{Q}$ by
\[(\GSp_{2m} \boxtimes \GSp_{2n})(R) = \set{M \in (\GSp_{2m} \times \GSp_{2n})(R): \mu_1(M) = \mu_2(M)},\]
where $\mu_1: \GSp_{2m} \rightarrow \mathbf{G}_m$ and $\mu_2: \GSp_{2n}\rightarrow \mathbf{G}_m$ are the similitude maps.

Now let $V = W_6$.  We define a decomposition $V = V_1 \oplus V_2$ by $V_1=\gen{e_1,f_1}$ and $V_2 =\gen{e_2,e_3,f_3,f_2}$, where $\gen{\cdot}$ denotes the span.  Using (\ref{eqn:defsymp}) to define the forms $\langle \cdot , \cdot \rangle_i$ on $V_i$, there are identifications $V_1 \cong W_2$ and $V_2 \cong W_4$.  We obtain an embedding $\GSp_2 \boxtimes \GSp_4 \rightarrow \GSp_6$.  When considering this subgroup, we will identity $\GSp_2$ with $\GL_2$ in what follows.  We also write $B, B_1$ and $B_2$ for the upper-triangular Borel subgroups of $\GSp_6$, $\GL_2$, and $\GSp_4$, respectively, and $U_B,U_{B_1}$, and $U_{B_2}$ for the respective unipotent radicals.

We define the parabolic subgroup $P \subseteq \GSp_6$ as the stabilizer of the flag $\gen{f_1,f_2} \subseteq \gen{f_1,f_2,f_3}$.  With respect to our ordered basis,
\begin{equation} \label{eqn:defp} P = \set{ \(\begin{array}{cccc} m_1 & * & * & * \\ & m_2 & * & * \\  & & \mu m_2^{-1} & *\\ & & & \mu m_1^*\end{array}\) \in \GSp_6: m_1 \in \GL_2, m_2 \in\GL_1, \mu \in \mathbf{G}_m}.\end{equation}
The notation $m_1^*$ means that the matrix is determined by $m_1$ and the symplectic condition. The Levi $M$ of $P$ is isomorphic to $\GL_2 \times \GL_1 \times \mathbf{G}_m$, where as in (\ref{eqn:defp}) we write the last factor as $\mathbf{G}_m$ to emphasize that it comes from the similitude.  Let $\mathbf{A}$ denote the adeles of $\mathbf{Q}$.  We define a character $\chi_{w,s}: M(\mathbf{A}) \rightarrow \mathbf{C}^\times$ by $\chi_{w,s}(m_1,m_2,\mu) = |\det m_1|_{\mathbf{A}}^{w}|m_2|_{\mathbf{A}}^{2s}|\mu|_{\mathbf{A}}^{-s-w}$.  Let $f_{w,s} = \prod_v f_{v,w,s}$ denote a factorizable section of $\ind_{P(\mathbf{A})}^{\GSp_6(\mathbf{A})} \chi_{w,s}$, where here we mean the non-normalized induction.  We require that at all but finitely many primes $p$, $f_{p,w,s}$ is unramified, by which we mean that $f_{p,w,s}$ is fixed by $G(\mathbf{Z}_p)$ and normalized so that $f_{p,w,s}(1)=1$.  We then define the function
\[E(g,w,s) = \sum_{\gamma \in P(\mathbf{Q}) \backslash \GSp_6(\mathbf{Q})} f_{w,s}(\gamma g)\]
on $\GSp_6(\mathbf{A})$.  By \cite[Proposition II.1.5]{mw}, this sum converges if both $\mathrm{Re}(w-2s)$ and $\mathrm{Re}(s)$ are sufficiently large, and the series can be meromorphically continued in $w$ and $s$ by work of Langlands.

For a finite set of places $S$, we define a normalized Eisenstein series
\[E_S^*(g,s,w) = \zeta^S(2s)\zeta^S(w-1)\zeta^S(2w-4)\zeta^S(2s+w-2)\zeta^S(w-2s) E(g,s,w).\]
The normalized Eisenstein series satisfies a functional equation relating $w$ to $5-w$ and $s$ to $1-s$.

Let $\pi_1$ and $\pi_2$ be cuspidal automorphic representations with trivial central character on $G_1=\GL_2$ and $G_2=\GSp_4$, respectively.  For $i \in \set{1,2}$, we write $\phi_i \in \pi_i$ for an automorphic form that, when viewed as a vector in a fixed factorization $\pi_i = \bigotimes_v \pi_{i,v}$ of $\pi_i$ into representations of each $G_i(\mathbf{Q}_v)$, corresponds to a pure tensor.  Let $Z$ be the common center of $\GSp_6$ and $\GL_2 \boxtimes \GSp_4$.  Define the non-normalized Rankin-Selberg integral
\[I(\phi_1,\phi_2, s,w) = \int_{(\GL_2\boxtimes \GSp_4)(\mathbf{Q})Z(\mathbf{A}) \backslash (\GL_2\boxtimes \GSp_4)(\mathbf{A})} E(g,w,s)\phi_1(g_1)\phi_2(g_2)dg,\]
where $g_1$ and $g_2$ denote the respective projections to $\GL_2$ and $\GSp_4$.  Then our main result is the following theorem.

\begin{thm} \label{thm:mainthm}
Define the normalized Rankin-Selberg integral
\[I^*_S(\phi_1,\phi_2, s,w) = \int_{(\GL_2\boxtimes \GSp_4)(\mathbf{Q})Z(\mathbf{A}) \backslash (\GL_2\boxtimes \GSp_4)(\mathbf{A})} E^*_S(g,w,s)\phi_1(g_1)\phi_2(g_2)dg.\]
Then
\begin{equation} \label{eqn:mainthm} I^*_S(\phi_1,\phi_2, s,w) =_S L(w-2,\pi_1\times \pi_2,\mathbf{1}\times \mathrm{Std})L(s,\pi_1\times \pi_2,\mathrm{Std}\times \mathrm{Spin}),\end{equation}
where the notation $=_S$ means that both sides factorize into a product of functions at each place of $\mathbf{Q}$ that are equal term-by-term away from the finite set $S$ of places that are either infinite or at which $\phi_1$, $\phi_2$, or $f_{s,w}$ is ramified.
\end{thm}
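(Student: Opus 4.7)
The proof follows the standard Rankin-Selberg paradigm of unfolding followed by a local unramified computation. The plan is to first substitute the definition of $E(g,w,s)$ as a sum over $P(\mathbf{Q}) \backslash \GSp_6(\mathbf{Q})$ and unfold against $G = \GL_2 \boxtimes \GSp_4$. The double coset decomposition $P(\mathbf{Q}) \backslash \GSp_6(\mathbf{Q}) / G(\mathbf{Q})$ should be analyzed in terms of the relative positions of the partial flag $\langle f_1, f_2\rangle \subseteq \langle f_1, f_2, f_3\rangle$ with the decomposition $V = V_1 \oplus V_2$, and for each non-negligible orbit one has to decide whether the inner unipotent integration of $\phi_1(g_1)\phi_2(g_2)$ forces the contribution to vanish by cuspidality. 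I expect exactly one open orbit to survive; identifying its stabilizer inside $G$ and a convenient representative $\gamma_0$ will reduce the global integral to an integral over $\mathrm{Stab}(\gamma_0)(\mathbf{Q}) Z(\mathbf{A}) \backslash G(\mathbf{A})$ of $f_{w,s}(\gamma_0 g)\phi_1(g_1)\phi_2(g_2)$.

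Second, I would exploit genericity: replace $\phi_1$ and $\phi_2$ by their Whittaker expansions against the standard Whittaker characters on $U_{B_1}$ and $U_{B_2}$, collapsing unipotent sums against $P(\mathbf{Q}) \cap \gamma_0 G(\mathbf{Q}) \gamma_0^{-1}$ in the usual way. After this the integrand is factorizable over places, and the result is a product of local integrals
\[\prod_v \int_{H_v \backslash G(\mathbf{Q}_v)} f_{v,w,s}(\gamma_0 g_v)\, W_{1,v}(g_{1,v}) W_{2,v}(g_{2,v})\, dg_v,\]
where $H_v$ is the appropriate Whittaker-stabilizer. The inner unipotent integration indicated in Section \ref{subsec:unipotent} should pre-simplify $f_{v,w,s}(\gamma_0 u g)$ along a suitable unipotent subgroup so as to produce a cleaner local integrand before the unramified computation.

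Third, and this is the main obstacle, comes the unramified calculation: for a finite prime $p$ at which everything is unramified, one must evaluate the local integral for spherical $W_{1,v}$, $W_{2,v}$ and spherical $f_{v,w,s}$. The Casselman-Shalika formula should express $W_{1,v}$ and $W_{2,v}$ as explicit polynomials in the Satake parameters on the torus, reducing the problem to a geometric series over lattice cosets. The identity to verify is that this equals
\[\frac{L_v(w-2,\pi_{1,v}\times \pi_{2,v},\mathbf{1}\times \mathrm{Std})\, L_v(s,\pi_{1,v}\times \pi_{2,v},\mathrm{Std}\times \mathrm{Spin})}{\zeta_v(2s)\zeta_v(w-1)\zeta_v(2w-4)\zeta_v(2s+w-2)\zeta_v(w-2s)}.\]
Matching the two sides will likely proceed by recognizing the sum as a generating function in the Satake parameters and comparing it with a Weyl-character style identity for the spin and standard representations; here the normalizing zeta factors in $E^*_S$ were chosen precisely to cancel the denominators that appear in the geometric summation.

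Finally, assembling the global and local steps gives the claimed product decomposition, with equality at all unramified places; the ramified and Archimedean places contribute the unspecified finite factors hidden by the notation $=_S$.
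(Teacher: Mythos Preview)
Your proposal is correct and follows essentially the same approach as the paper: a five-orbit double coset decomposition with four orbits vanishing by cuspidality, Whittaker unfolding on the surviving open orbit, an explicit unipotent integration, and then the unramified computation. The paper's unramified step is carried out via symmetric algebra decompositions and orthogonal Pieri rules for $\Spin_5(\mathbf{C})$, followed by a lengthy combinatorial matching of character coefficients, rather than a single Weyl-character identity, but your outline captures the correct overall structure.
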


\subsection{Unfolding calculation} \label{subsec:unfolding}
We prove the following result, which shows that the global integral on the left-hand side of (\ref{eqn:mainthm}) can be expressed as a product of local integrals involving Whittaker functions.  First, we fix some notation.  Write $H = \GL_2 \boxtimes \GSp_4$.

\subsubsection{Notation} For $y$ in $V = W_6$, denote by $E(y)$ the abelian unipotent group inside $\GSp_6 \supseteq H$ consisting of the maps $v \mapsto v + u\langle v,y \rangle y$ for $u \in \mathbf{G}_a$.  For instance, we have $E(f_1) = U_{B_1}$, where we recall that $U_{B_i}$ denotes the Borel of our fixed embedded copy of $\GSp_{2i}$ for $i \in \set{1,2}$.  We fix an additive character $\psi: \Q\backslash \A \rightarrow \C^\times$ of conductor $1$.  Define the Whittaker coefficient of $\phi_1$ to be
\begin{equation}\label{eqn:whit1} W_{\phi_1}(g) = \int_{E(f_1)(\Q)\backslash E(f_1)(\A)}{\psi^{-1}(\langle e_1, e_1 \cdot n \rangle)\phi_1(n g)\,dn}.\end{equation}
(Recall that $e_1 \cdot n$ denotes the right action of $n \in \GSp_6$ on $V$.) Also let
\begin{equation}\label{eqn:whit2} W_{\phi_2}(g) = \int_{U_{B_2}(\Q)\backslash U_{B_2}(\A)}{\psi^{-1}(\langle e_2 \cdot u,f_3 \rangle + \langle e_3, e_3 \cdot u \rangle)\phi(ug)\,du}\end{equation}
be the Whittaker coefficient associated to $\phi_2$.

Finally, define $E(f_3)'$ to be the subgroup of $H$ consisting of the maps 
\[v \mapsto v - u \langle v, f_1\rangle f_1 + u\langle v, f_3 \rangle f_3\]
for $u$ in $\mathbf{G}_a$ and let $\gamma_5$ denote an element of $\Sp_6(\Q)$ that satisfies $\gen{f_1, f_2} \gamma_5 = \gen{f_1 +f_3, e_1-e_3}$ and $\gen{f_1, f_2, f_3} \gamma_5 = \gen{f_1 + f_3, e_1 -e_3, f_2}.$ (The particular choice of $\gamma_5$ does not matter, although we will choose a specific one below.)
\begin{thm} \label{unfolding} With notation as above,
\begin{equation} \label{eqn:adelicint} I(\phi_1,\phi_2,s,w) = \int_{E(f_3)'(\A)E(f_2)(\A)Z(\A)\backslash H(\A)}{W_{\phi_1}(g_1) W_{\phi_2}(g_2)f(\gamma_5 g,s ,w)\,dg}. \end{equation}
\end{thm}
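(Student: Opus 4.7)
I would follow the classical three-step unfolding of a Rankin--Selberg integral against an Eisenstein series with cuspidal data. First, in the region where $E(g,w,s)$ converges absolutely, substitute its defining series into $I(\phi_1,\phi_2,s,w)$ and interchange the sum and integral to obtain
\begin{equation*}
I(\phi_1,\phi_2,s,w) = \sum_{\gamma} \int_{H_\gamma(\Q) Z(\A) \backslash H(\A)} f_{w,s}(\gamma g)\, \phi_1(g_1) \phi_2(g_2) \, dg,
\end{equation*}
where $\gamma$ runs over representatives of $P(\Q) \backslash \GSp_6(\Q) / H(\Q)$ and $H_\gamma = H \cap \gamma^{-1} P \gamma$.

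Next I would identify the double cosets geometrically. Since $P$ stabilizes the flag $\gen{f_1,f_2} \subseteq \gen{f_1,f_2,f_3}$, the quotient $P \backslash \GSp_6$ is the variety of partial isotropic flags $L \subseteq M \subseteq V$ with $\dim L = 2$ and $\dim M = 3$. The $H(\Q)$-orbits on this variety are classified by the finitely many invariants coming from the intersection/projection data of $L$ and $M$ with the decomposition $V = V_1 \oplus V_2$, together with the rank of the induced symplectic pairings of the projections. There are only finitely many orbits, and the open one is represented by an element like $\gamma_5$ that carries $\gen{f_1, f_2} \subseteq \gen{f_1, f_2, f_3}$ to $\gen{f_1 + f_3,\, e_1 - e_3} \subseteq \gen{f_1 + f_3,\, e_1 - e_3,\, f_2}$, a flag in general position with respect to the decomposition $V_1 \oplus V_2$. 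For every non-open orbit representative $\gamma$, I would exhibit a unipotent subgroup of $H_\gamma$ along which either $\phi_1$ or $\phi_2$ has vanishing constant term, thereby killing that orbit's contribution by cuspidality.

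For the remaining open orbit I would compute $H_{\gamma_5}$ explicitly, check that its unipotent part contains $E(f_3)'$ and $E(f_2)$ rationally, and then perform the Whittaker unfolding. Concretely, the section $f_{w,s}(\gamma_5\, \cdot\, g)$ transforms by the additive characters appearing in (\ref{eqn:whit1}) and (\ref{eqn:whit2}) under the larger unipotent subgroups $E(f_1)$ and $U_{B_2}$ of $H$; using Fourier expansion of $\phi_1$ along $E(f_1)$ and of $\phi_2$ along $U_{B_2}$, the nongeneric terms vanish by cuspidality, the generic terms assemble into $W_{\phi_1}(g_1)$ and $W_{\phi_2}(g_2)$, and the $\Q$-rational quotients in the remaining integration are simultaneously converted into adelic ones. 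The net effect is to replace the quotient $H_{\gamma_5}(\Q) Z(\A) \backslash H(\A)$ by $E(f_3)'(\A) E(f_2)(\A) Z(\A) \backslash H(\A)$, yielding (\ref{eqn:adelicint}).

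The main obstacle is the orbit analysis: enumerating all $H$-orbits on the two-step flag variety $P \backslash \GSp_6$, and for each non-open orbit producing an explicit unipotent subgroup inside the stabilizer along which cuspidality applies. Once that geometric step is in hand, identifying the stabilizer $H_{\gamma_5}$ and running the Whittaker unfolding is a careful but essentially standard bookkeeping exercise.
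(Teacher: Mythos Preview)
Your overall plan---unfold the Eisenstein series, classify the $H(\Q)$-orbits on the flag variety $P\backslash\GSp_6$, kill the non-open orbits by cuspidality, then Whittaker-unfold the surviving open-orbit integral---is exactly the paper's argument. The paper finds five orbits and kills four by exhibiting unipotent subgroups of the stabilizer: $U_{B_1}\times\mathbf{1}_4$ for three of them and the Klingen unipotent radical $U(f_3)\subseteq\GSp_4$ for the fourth.

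However, your description of the final Whittaker step contains a real error. You claim that the section $f_{w,s}(\gamma_5\,\cdot\,)$ ``transforms by the additive characters'' of (\ref{eqn:whit1}) and (\ref{eqn:whit2}) under $E(f_1)$ and $U_{B_2}$. It does not: neither group lies in $\mathrm{Stab}_5=H\cap\gamma_5^{-1}P\gamma_5$, and a principal-series section has no reason to be equivariant under unipotents outside the inducing parabolic. The correct mechanism is as follows. The unipotent part of $\mathrm{Stab}_5$ is generated by $E(f_2)$ and the \emph{diagonal} group $E(f_3)'$, and the section is \emph{invariant} under both. One integrates $\phi_2$ over $E(f_2)$, then Fourier-expands that constant term along the abelian quotient $U(f_2)/E(f_2)$; the reductive part of $\mathrm{Stab}_5$ (an embedded copy of $\GL_2$) acts transitively on the nontrivial characters, collapsing the expansion to a single term $\phi_{2,\chi_1}$. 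One then Whittaker-expands $\phi_1$ over $T'(\Q)$. The last and crucial step is the $E(f_3)'$ integration: because $E(f_3)'$ sits diagonally in $\GL_2\boxtimes\GSp_4$, its $\GL_2$-component makes $W_{\phi_1}$ emit precisely the simple-root character needed on the $\GSp_4$ side to turn the partial coefficient $\phi_{2,\chi_1}$ into the full Whittaker function $W_{\phi_2}$. The characters thus come from Fourier-expanding the cusp forms and from this diagonal coupling, not from the section; if you try to extract them from the section as you propose, the computation will not go through.
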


Before proving the theorem, we will first perform an orbit calculation that will be used to unfold the Eisenstein series.  Recall that $P(\mathbf{Q}) \backslash G(\mathbf{Q})$ may be identified with the space of partial isotropic flags of the form $F_2 \subseteq F_3$, where $F_i$ is an $i$-dimensional isotropic subspace of $V$.

\begin{lemma}\label{orbitCalc}
The double coset space $P(\mathbf{Q}) \backslash G(\mathbf{Q})/H(\mathbf{Q})$ consists of five elements represented by the flags
\begin{enumerate}
	\item $\gen{f_2,f_3} \subseteq \gen{f_1,f_2,f_3}$,
	\item $\gen{f_1,f_2} \subseteq \gen{f_1,f_2,f_3}$,
	\item $\gen{f_1+f_2,f_3} \subseteq \gen{f_1,f_2,f_3}$,
	\item $\gen{f_1+f_2,f_3} \subseteq \gen{f_1+f_2,e_1-e_2,f_3}$, and
	\item $\gen{f_1+f_2,e_1-e_2} \subseteq \gen{f_1+f_2,e_1-e_2,f_3}$.
\end{enumerate}

\end{lemma}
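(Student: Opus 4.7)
The plan is to first identify $P(\mathbf{Q}) \backslash G(\mathbf{Q})$ with pairs $(F_2, F_3)$ of isotropic subspaces $F_2 \subseteq F_3$ of dimensions $2$ and $3$ in $V$, as noted in the sentence preceding the lemma, so that the task reduces to enumerating $H(\mathbf{Q})$-orbits on such pairs. Since $H = \GL_2 \boxtimes \GSp_4$ preserves the orthogonal decomposition $V = V_1 \oplus V_2$, orbits can be distinguished by the $H$-invariant quadruple
\[(a,b,c,d) := (\dim(F_3 \cap V_1),\ \dim(F_3 \cap V_2),\ \dim(F_2 \cap V_1),\ \dim(F_2 \cap V_2)),\]
subject to $a \leq 1$, $b \leq 2$, $c \leq a$, and $d \leq b$ (the first two because isotropic subspaces of the non-degenerate $V_i$ have dimension at most $1$ and $2$ respectively).

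The first step is to classify $H$-orbits of $F_3$ using $(a,b)$. A direct isotropy argument rules out $(0,2)$ and $(1,1)$: in each forbidden case, a ``tilted'' vector $v \in F_3$ with nonzero projections to both $V_1$ and $V_2$ would be constrained by $\langle v, F_3 \cap V_i\rangle = 0$ to have each component already lying in $F_3 \cap V_i$, collapsing the dimension count. For the surviving cases, the transitivity of $\GL_2$ on lines in $V_1$ and of the Siegel parabolic of $\GSp_4$ on Lagrangians in $V_2$ reduce the split case $(1,2)$ to $F_3 = \langle f_1, f_2, f_3\rangle$, and together with the simple transitivity of $\Sp_2$ on symplectic bases of a two-dimensional symplectic space reduce the non-split case $(0,1)$ to $F_3 = \langle f_1+f_2,\, e_1-e_2,\, f_3\rangle$.

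The second step classifies $F_2 \subseteq F_3$ modulo $\mathrm{Stab}_H(F_3)$ via $(c,d)$. For the split $F_3$, a short enumeration yields the three admissible tuples $(0,2), (1,1), (0,1)$, producing the orbits (1), (2), (3) after applying the $\GL_2$-action on $\langle f_2, f_3\rangle$ afforded by the Levi of the Siegel parabolic of $\GSp_4$. For the non-split $F_3$, the key dichotomy is whether $f_3 \in F_2$: if so, $F_2/\langle f_3\rangle$ is a line in $V_1 \cong F_3/\langle f_3\rangle$ normalized by the $\GL_2$-action to yield orbit (4); if not, $F_2$ is the image of a section of $\pi_1|_{F_3}: F_3 \to V_1$, differing from the canonical section $f_1 \mapsto f_1+f_2,\, e_1 \mapsto e_1-e_2$ by a pair $(\lambda,\mu) \in \mathbf{Q}^2$ of $f_3$-multiples. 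I expect the main obstacle to be normalizing $(\lambda,\mu)$ to $(0,0)$, which I would handle by exhibiting two Siegel-type one-parameter unipotent subgroups of $\mathrm{Stab}_H(F_3) \cap (\{I\}\times \Sp_4)$, namely $f_2 \mapsto f_2+af_3,\ e_3 \mapsto e_3-ae_2$ and $e_2 \mapsto e_2+bf_3,\ e_3 \mapsto e_3+bf_2$ (with all other basis vectors fixed), whose action on $F_3$ is trivial but which independently shift $\lambda$ and $\mu$ on $F_2$, yielding orbit (5). Distinctness of the five orbits then follows from the pairwise distinctness of the invariant tuples $(a,b,c,d)$ at the five listed representatives.
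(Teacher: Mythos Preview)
Your argument is correct and complete, with one small terminological slip: the two one-parameter unipotents you exhibit do not act \emph{trivially} on $F_3$ (for instance $e_1-e_2 \mapsto e_1-e_2-bf_3$), they merely stabilize $F_3$ setwise. That is of course exactly what is needed, since a literally trivial action on $F_3$ would also fix $F_2$.

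Your approach is genuinely different from the paper's. The paper classifies $F_2$ first (finding four $H$-orbits, via an ad hoc case split (a), (b), (c1), (c2) according to how $F_2$ sits relative to $V_1$ and $V_2$), and then for each normalized $F_2$ determines the possible $F_3 \supseteq F_2$, with case (c1) splitting further into (c1i) and (c1ii). You reverse this: you classify $F_3$ first, reducing to just two orbits via the invariant pair $(a,b) = (\dim F_3\cap V_1,\dim F_3\cap V_2)$, and then classify $F_2 \subseteq F_3$ modulo $\mathrm{Stab}_H(F_3)$ using $(c,d)$. Your route has two advantages: the invariant tuple $(a,b,c,d)$ gives distinctness of the five orbits for free at the end (the paper instead relies on the qualitative descriptions of its cases being visibly $H$-invariant), and the overall case tree is shallower. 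The paper's route is somewhat more elementary in that it never needs to compute $\mathrm{Stab}_H(F_3)$ for the non-split $F_3$, whereas you implicitly use that the $\GL_2$-factor of $H$ still acts transitively on lines in $V_1 \cong F_3/\langle f_3\rangle$ through $\mathrm{Stab}_H(F_3)$; this is true (since $g_1$ determines the $\langle e_2,f_2\rangle$-block of $g_2$ via the anti-isometry, with compatible similitude), but it is worth making explicit in a full write-up.
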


\begin{proof}

We need to classify the $H(\mathbf{Q})$ orbits of the flags parametrized by $P(\mathbf{Q}) \backslash G(\mathbf{Q})$.  We first consider orbits for the 2-dimensional space $F_2$, and then classify extensions of these to the flag $F_2 \subseteq F_3$.

Recall that $\GL_2(\mathbf{Q})$ acts on $\gen{e_1,f_1}$ while $\GSp_4(\mathbf{Q})$ acts on $\gen{e_2,e_3,f_2,f_3}$.  This corresponds to the fixed splitting $V = V_1 \oplus V_2$.  We first consider case (a), which is where $F_2 \subseteq V_2$.  In this case, $F_2$ can be moved to $\gen{f_2,f_3}$ by $\GSp_4(\mathbf{Q})$, and conversely, the action of $H$ permutes such spaces, so this is the only needed representative.  For the remaining orbits, we may assume that $F_2$ is not contained in $V_2$.

We further subdivide into case (b), which is when $F_2$ contains a vector in $V_1$, or case (c), where $F_2$ does not contain a vector of $V_1$ and is not contained in $V_2$.  In case (b), we pick $v_1 \in F_2$, and in case (c), we pick any $v_1 \in F_2$ whose $V_1$ component is non-zero.  In either case, we move by $\GL_2(\mathbf{Q})$ so that the $V_1$ component is $f_1$, choosing an arbitrary action on $V_2$ with the right similitude to obtain an element of $H(\mathbf{Q})$.  In case (c), we further act by an element of $H$ whose $\GSp_4$ part moves the $V_2$ component of $v_1$ to $f_2$ and whose $\GL_2$ component fixes $f_1$ and scales $e_1$ suitably to meet the similitude condition.  In particular, $v_1 = f_1$ in case (b) and $v_1= f_1+f_2$ in case (c).  In either case, we pick a second basis element $v_2 \in F_2$ so that when $v_2$ is written in our chosen basis, the $f_1$ coefficient is 0.  Due to $F_2$ being isotropic, $v_2 \notin V_1$.  In case (b), the fact $\langle v_1,v_2 \rangle = 0$ forces $v_2 \in V_2$, so we can act by $H$ to move $v_2$ to $f_2$ while fixing $v_1$.  This shows that the flags in case (b) form a single orbit represented by $\gen{f_1,f_2}$.

In case (c), we further subdivide into the case (c1) that $v_2 \in V_2$ or (c2) that $v_2 \notin V_2$.  In case (c1), $v_2$ and the $V_2$ component of $v_1$ generate a maximal isotropic subspace of $V_2$, so we may move $v_2$ to $f_3$ while fixing $f_1$ and $f_2$.  In particular, the $F_2$ that contain no vector in $V_1$ but do contain a vector in $V_2$ form a single orbit represented by $\gen{f_1+f_2,f_3}$.

In case (c2), first note that the $V_2$ component of $v_2$ is not a multiple of $f_2$, else we would be in case (b).  Moreover, writing $v_i = v_i^{(2)} + v_i^{(4)} \in V_1 \oplus V_2 = V$ for $i \in \set{1,2}$, we have $\langle v_1^{(2)}, v_2^{(2)} \rangle = -\langle v_1^{(4)}, v_2^{(4)} \rangle$.  We may first move by an element of $H$ that fixes the $f_i$ and scales the $e_i$ by a constant so that $v_2^{(2)}=e_1$ and $v_2^{(4)} = -e_2 +be_3+cf_2+df_3$.  We may then move by an element of $\Sp_4(\mathbf{Q}) \subseteq H(\mathbf{Q})$ that fixes $v_1^{(4)}$ and sends $-v_2^{(4)}$ to $e_2$ since it acts transitively on symplectic bases (and $v_1^{(4)}$ and $-v_2^{(4)}$ may be extended to such a basis).  So all $F_2$ containing no vectors in $V_1$ or $V_2$ form a single orbit represented by $\gen{f_1+f_2,e_1-e_2}$.

We now consider possibilities for the maximal isotropic subspace $F_3$ containing $F_2$ in each case.  For case (a), since $F_3$ is isotropic, a vector $v_3 \in F_3 \setminus F_2$ must not lie in $V_2$, and by subtracting elements of $F_2$, we can assume $v_3 \in V_1$.  We may then move the flag to $\gen{f_2,f_3} \subseteq \gen{f_1,f_2,f_3}$, which is representative (1) above.  For case (b), take any $v_3 \in F_3 \setminus F_2$ and subtract a multiple of $v_1=f_1$ so that (since $F_3$ is isotropic) we have $v_3 \in V_2$.  We can then use an element of $H(\mathbf{Q})$ to move $v_3$ to $f_3$ while fixing $v_1$ and $v_2$, giving the flag $\gen{f_1,f_2} \subseteq \gen{f_1,f_2,f_3}$, which is representative (2).

In case (c1), we subdivide further into the cases (c1i) that $F_3$ contains a maximal isotropic subspace of $V_2$ and (c2ii) that $F_3$ does not contain a maximal isotropic subspace of $V_2$.  In case (c1i), it is easy to see that $(F_3 \setminus F_2) \cap V_2$ must be nonempty, and given $v_3$ in this intersection, it may be moved by $H(\mathbf{Q})$ to $f_2$ while fixing $v_1$ and $v_2$.  This gives us representative (3) above.  In case (c1ii), let $v_3 \in F_3 \setminus F_2$, and note that we cannot have $v_3 \in V_2$ or $\gen{v_2,v_3}$ would be a maximal isotropic subspace of $V_2$.  Subtracting a multiple of $v_1$ and $v_2$ and multiplying by an element of $H(\mathbf{Q})$ that scales the $e_i$ and fixes the $f_i$, we can arrange using the argument in case (c2) above that in the decomposition $v_3 = v_3^{(2)}+v_3^{(4)}$, $v_3^{(2)}=e_1$ and $v_3^{(4)} = -e_2 +be_3+cf_2+df_3$.  As before, we can use an element of $\Sp_4(\mathbf{Q}) \subseteq H(\mathbf{Q})$ to move $-v_3^{(4)}$ to $e_2$ while fixing $v_1^{(4)}$ and $v_2^{(4)}$.  We obtain the representative $\gen{f_1+f_2,f_3} \subseteq \gen{f_1+f_2,e_1-e_2,f_3}$ in this case, which is (4) above.

We finally consider case (c2).  Starting with any $v_3 \in F_3 \setminus F_2$, we may subtract multiples of $v_1$ and $v_2$ to get $v_3 \in V_2$.  Using the notation above, we observe that $\langle v_1^{(4)},v_3^{(4)} \rangle = \langle -v_2^{(4)},v_3^{(4)} \rangle = 0$, so $v_1^{(4)},v_2^{(4)},$ and $v_3^{(4)}$ may be extended to a symplectic basis.  We move by an element of $\Sp_4(\mathbf{Q}) \subseteq H(\mathbf{Q})$ as before to obtain representative (5) above.
\end{proof}

We remark that the qualitative descriptions of the cases given in the proof are all invariant under the action of $H(\mathbf{Q})$ and thus provide natural characterizations of the five orbits.

\begin{proof}[Proof of Theorem \ref{unfolding}] For $i = 1,2,3,4$, denote by $F_2^i \subseteq F_3^i$ the corresponding $(2,3)$ flag from Lemma \ref{orbitCalc}, i.e.\ the $i^\textrm{th}$ flag listed in this lemma.  For $i = 5$, set $F_2^5 = \gen{f_1 + f_3, e_1 - e_3}$, $F_3^5 = \gen{ f_1 + f_3, e_1 - e_3, f_2} = F_3$.  Clearly this flag is in the same $H$ orbit as the $5$th flag of Lemma \ref{orbitCalc}.

Suppose $\gamma_1, \ldots , \gamma_5$ are elements of $\GSp_6(\Q)$ such that $\gen{f_1,f_2}\gamma_i = F_2^i$ and $\gen{f_1,f_2,f_3} \gamma_i = F_3^i$. Denote by $\mathrm{Stab}_i$ the stabilizer inside $H$ of the flag $F_2^i \subseteq F_3^i$.  Then the global integral is a sum $\sum_{i =1}^{5}{I_i(\phi_1, \phi_2,s,w)}$, where
\[I_i(\phi_1,\phi_2,s,w) = \int_{\mathrm{Stab}_i(\Q)Z(\A)\backslash H(\A)}{\phi_1(g_1)\phi_2(g_2)f(\gamma_i g,s,w)\,dg}.\]
We claim the integrals $I_i$ vanish for $1 \leq i \leq 4$.  Indeed, the unipotent group $U_{B_1} \times \mathbf{1}_4 \subseteq H$ is contained inside $\mathrm{Stab}_i$ for $i = 1,2,3$, so these integrals vanish by the cuspidality of $\phi_1$.  Now let $i=4$.  Denote by $U(f_3)$ the unipotent radical of the parabolic subgroup of $\GSp_4$ that stabilizes the line spanned by $f_3$.  Then $\mathbf{1}_2 \times U(f_3) \subseteq H$ is contained inside $\mathrm{Stab}_4$.  Moreover, the section $f(\gamma_4 g,s,w)$ is left-invariant by $U(f_3)$ since the conjugate $\gamma_4 U(f_3) \gamma_4^{-1}$ is contained in the parabolic $P$.  Therefore the integral $I_4(\phi_1,\phi_2,s,w)$ vanishes by the cuspidality of $\phi_2$.

We now unfold the integral $I_5$.  First, we compute $\mathrm{Stab}_5$.
\begin{lemma} \label{lem:gamma5stab} An element $(g_1, g_2) \in (\GL_2 \boxtimes \GSp_4)(\mathbf{Q})$ is in $\mathrm{Stab}_5(\mathbf{Q})$ if and only if they have the form
\[g_1 = \mm{a}{-b}{-c}{d} \textrm{ and } g_2 = \left(\begin{array}{cccc} * & & & *\\ & a & b & \\ & c&d & \\ & & & *\end{array}\right),\]
where the ordered basis used to write the second matrix is $\set{e_2, e_3, f_3, f_2}$.
\end{lemma}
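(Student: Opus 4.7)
The plan is to translate the flag-stabilization condition into matrix conditions on $(g_1,g_2)$, exploiting that $g_1$ acts on $V_1 = \gen{e_1,f_1}$, $g_2$ acts on $V_2 = \gen{e_2,e_3,f_3,f_2}$, and any $v = v^{(1)} + v^{(2)} \in V_1 \oplus V_2$ is sent by $(g_1,g_2)$ to $v^{(1)} g_1 + v^{(2)} g_2$. An element $(g_1, g_2) \in H(\Q)$ lies in $\mathrm{Stab}_5$ iff each of the three spanning vectors $f_1+f_3,\ e_1-e_3$ (for $F_2^5$) and $f_2$ (for $F_3^5$) is sent into the appropriate subspace. Since an element $\lambda(f_1+f_3)+\mu(e_1-e_3) \in F_2^5$ is determined by its $V_1$-component $\lambda f_1 + \mu e_1$, the stabilization condition for $F_2^5$ decouples cleanly between $g_1$ and $g_2$.

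Concretely, writing $g_1 = \mm{\alpha}{\beta}{\gamma}{\delta}$ in the ordered basis $(e_1,f_1)$, the image of $f_1+f_3$ has $V_1$-part $\gamma e_1 + \delta f_1$, forcing $(\lambda,\mu) = (\delta,\gamma)$, and matching $V_2$-parts then forces $f_3 g_2 = -\gamma e_3 + \delta f_3$. The same procedure applied to $e_1-e_3$ yields $e_3 g_2 = \alpha e_3 - \beta f_3$. Under the change of variables $(a,b,c,d) = (\alpha,-\beta,-\gamma,\delta)$, this reproduces the claimed shape of $g_1$ and the inner $2\times 2$ block of $g_2$. Separately, $F_3^5 \cap V_2 = \gen{f_2}$ by direct inspection (any element of $F_3^5$ with zero $V_1$-part has both $f_1+f_3$ and $e_1-e_3$ coefficients zero), so $f_2 g_2 \in F_3^5$ forces $f_2$ to be an eigenvector of $g_2$, pinning down the last row.

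What remains is to show that in the top ($e_2$) row of $g_2$ the entries in the $e_3$- and $f_3$-columns must vanish; this is the only step that goes beyond pure flag-stabilization. I would invoke the symplectic condition $g_2 J g_2^t = \mu(g_2) J$, with $J$ the Gram matrix of $V_2$ in the basis $(e_2,e_3,f_3,f_2)$. Extracting the $(1,2)$- and $(1,3)$-entries of this matrix identity produces a $2\times 2$ homogeneous linear system in the two unknown top-row entries whose coefficient matrix has determinant $-(ad-bc) = -\mu(g_2) \ne 0$, forcing both to vanish. The matching-similitudes condition $\mu(g_1) = \mu(g_2)$ defining $H$ comes for free, since $\det g_1 = ad - bc$ equals $\mu(g_2)$ by the $(2,3)$-entry of the same symplectic identity. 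The converse direction is a direct check on the three spanning vectors. I expect the symplectic step to be the most delicate: the flag conditions are linear-algebraic and transparent, but the vanishing of the spurious top-row entries genuinely requires that $g_2$ preserves the form, with care needed to set up $J$ correctly in the chosen basis ordering.
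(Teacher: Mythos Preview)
Your proof is correct and follows essentially the same route as the paper's: first use $f_2 g_2 \in F_3^5 \cap V_2 = \gen{f_2}$ to pin down the bottom row, then use the $F_2^5$-stabilization to force $g_2$ to preserve $\gen{e_3,f_3}$ and to tie the inner block to $g_1$ via the sign-twisted identification. The paper simply absorbs your explicit symplectic step for the top row into ``by a simple computation''; a slightly slicker way to phrase that step is to observe that once $g_2$ preserves $\gen{e_3,f_3}$, it must preserve the symplectic complement $\gen{e_3,f_3}^{\perp} = \gen{e_2,f_2}$, which immediately kills the two spurious entries without writing out $g_2 J g_2^t$.
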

\begin{proof} Suppose $g = (g_1, g_2)$ is in the stabilizer.  First note that the line spanned by $f_2$ must be taken to itself.  Indeed, since $f_2 \in F_3^5$, $f_2 \mapsto A f_2 + B(f_1 + f_3) + C(e_1 -e_3)$ for some $A,B,C \in\mathbf{Q}$.  But since $g_2 \in \GSp_4(\mathbf{Q})$, $B = C = 0$, since $e_1$ and $f_1$ cannot occur with nonzero coefficients.  Thus $g_2$ does indeed stabilize the line spanned by $f_2$.

Next, we see that $g_2$ takes $\gen{e_3, f_3}$ to itself, since $g_2$ stabilizes $F_2^5$.  Suppose the action of $g_2$ on this space is given by the matrix $\mm{a}{b}{c}{d}$, as in the statement of the lemma.  Then by a simple computation one sees that $(g_1,g_2)$ is in the stabilizer if and only if $g_1 = \mm{a}{-b}{-c}{d}$, as claimed. \end{proof}

Then the (open orbit) integral unfolds as
\begin{align*} I_5(\phi_1, \phi_2, s,w) &= \int_{\mathrm{Stab}_5(\Q)Z(\A)\backslash H(\A)}{\phi_1(g_1) \phi_2(g_2)f(\gamma_5 g,s ,w)\,dg} \\ &= \int_{\mathrm{Stab}_5(\Q)E(f_2)(\A)Z(\A)\backslash H(\A)}{\phi_1(g_1) \phi_{2,0}(g_2)f(\gamma_5 g,s ,w)\,dg} \end{align*}
where
\[\phi_{2,0}(g_2) = \int_{E(f_2)(\Q)\backslash E(f_2)(\A)}{\phi(ng_2)\,dn}.\]
Denote by $U(f_2)$ the unipotent radical of the parabolic subgroup of $\GSp_4$ that stabilizes the line spanned by $f_2$.  This is a Heisenberg group, with center the group $E(f_2)$.  Define the character $\chi_1:U(f_2)(\Q)\backslash U(f_2)(\A) \rightarrow \mathbf{C}^\times$ by $\chi_1(n) = \psi(\langle e_2 \cdot n,f_3\rangle)$.  Fourier expanding $\phi_{2,0}$ along $U(f_2)/E(f_2)$, one obtains
\[I_5(\phi_1, \phi_2, s,w) = \int_{B_\triangle(\Q)E(f_2)(\A)Z(\A)\backslash H(\A)}{\phi_1(g_1) \phi_{2,\chi_1}(g_2)f(\gamma_5 g,s ,w)\,dg},\]
where
\[\phi_{2,\chi_1}(g_2) = \int_{U(f_2)(\Q)\backslash U(f_2)(\A)}{\chi_1^{-1}(n)\phi_2(ng_2)\,dn}\]
and $B_\triangle \subseteq \mathrm{Stab}_5$ is the group of matrices of the form
\[(g_1,g_2) = (\mm{t}{-b}{}{t'},\left(\begin{array}{cccc} t & & & \\ & t & b & \\ & &t' & \\ & & & t'\end{array}\right)).\]
We let $T' \subseteq B_\triangle$ denote the diagonal maximal torus, which is embedded in $H$ as the matrices of the form $\mathrm{diag}(t,t,t,t',t',t')$.

We have $B_\triangle = T' E(f_3)'$, where $E(f_3)'\subseteq \mathrm{Stab}_5$ is defined as above to consist of the maps 
\[v \mapsto v - u \langle v, f_1\rangle f_1 + u\langle v, f_3 \rangle f_3\]
for $u$ in $\mathbf{G}_a$.  Applying the Whittaker expansion $\phi_1(g_1) = \sum_{\gamma \in T'(\Q)}{W_{\phi_1}(\gamma g_1)}$ and integrating over $E(f_3)'$, we obtain
\begin{align*} I_5(\phi_1, \phi_2, s,w) &= \int_{T'(\Q)E(f_3)'(\Q)E(f_2)(\A)Z(\A)\backslash H(\A)}{\phi_1(g_1) \phi_{2,\chi_1}(g_2)f(\gamma_5 g,s ,w)\,dg} \\ &= \int_{E(f_3)'(\Q)E(f_2)(\A)Z(\A)\backslash H(\A)}{W_{\phi_1}(g_1) \phi_{2,\chi_1}(g_2)f(\gamma_5 g,s ,w)\,dg} \\ &= \int_{E(f_3)'(\A)E(f_2)(\A)Z(\A)\backslash H(\A)}{W_{\phi_1}(g_1) W_{\phi_2}(g_2)f(\gamma_5 g,s ,w)\,dg}. \end{align*}
This completes the proof of the theorem.
\end{proof}

The following lemma specifies a suitable element $\gamma_5$.  Its proof is immediate.
\begin{lemma} \label{lem:gamma5calc} The elements $e_3, -f_1, e_2, f_2, e_1-e_3, f_1 + f_3$ form an ordered symplectic basis.  Consequently, we may choose the element $\gamma_5\in \Sp_6(\Q)$ to be the one that takes
\[e_1 \mapsto e_3, e_2 \mapsto -f_1, e_3 \mapsto e_2, f_3 \mapsto f_2, f_2 \mapsto e_1-e_3,\textrm{ and } f_1 \mapsto f_1 + f_3.\]
\end{lemma}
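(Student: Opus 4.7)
The plan is to execute a direct verification, matching the paper's indication that the proof is immediate. The first step is to check that the six listed vectors, taken in the order $e_3,\,-f_1,\,e_2,\,f_2,\,e_1-e_3,\,f_1+f_3$, satisfy the same symplectic pairing relations as the standard ordered basis $e_1,e_2,e_3,f_3,f_2,f_1$ defined by (\ref{eqn:defsymp}). Writing the first three vectors as $e_i'$ and the last three as $f_j'$ (with the convention that $f_j'$ is paired with $e_j'$), the required identities are $\langle e_i',f_j'\rangle = \delta_{ij}$ and $\langle e_i',e_j'\rangle = \langle f_i',f_j'\rangle = 0$. The only non-trivial computations are those involving $e_1-e_3$ and $f_1+f_3$, for example $\langle -f_1,e_1-e_3\rangle = -\langle f_1,e_1\rangle = 1$, $\langle e_3,f_1+f_3\rangle = 1$, and $\langle f_1+f_3,e_1-e_3\rangle = -\langle f_1,e_1\rangle + \langle f_3,e_3\rangle = 0$; the remaining pairings vanish because they involve $e_i$'s or $f_j$'s with disjoint indices.

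Once the new basis is shown to be symplectic, the second step invokes the simple transitivity of $\Sp_6(\mathbf{Q})$ on ordered symplectic bases of $V$: there is a unique element of $\Sp_6(\mathbf{Q})$ sending the standard ordered symplectic basis to the new one, and by the right action convention fixed in Section \ref{subsec:global}, this element is precisely $\gamma_5$ as described.

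Finally, I would verify that this $\gamma_5$ meets the flag conditions imposed before the statement of Theorem \ref{unfolding}, namely $\langle f_1,f_2\rangle\gamma_5 = \langle f_1+f_3,\,e_1-e_3\rangle$ and $\langle f_1,f_2,f_3\rangle\gamma_5 = \langle f_1+f_3,\,e_1-e_3,\,f_2\rangle$. Both are immediate from the explicit action on $f_1,f_2,f_3$ given in the lemma statement. There is no genuine obstacle here; the whole exercise is linear-algebraic bookkeeping, and the payoff is that this explicit choice of $\gamma_5$ will make the conjugation computations in later sections (unfolding and local unramified calculation) tractable.
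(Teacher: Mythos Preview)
Your proposal is correct and matches the paper's approach exactly: the paper states that the proof is immediate and gives no further argument, and your direct verification of the symplectic pairings together with the transitivity of $\Sp_6(\mathbf{Q})$ on ordered symplectic bases is precisely the intended computation. (One tiny cosmetic point: in the flag check you write $\langle f_1,f_2\rangle\gamma_5$ where the paper's notation for span is $\gen{f_1,f_2}$, and your expansion of $\langle f_1+f_3,e_1-e_3\rangle$ has a sign slip in the intermediate step, though the conclusion $=0$ is correct.)
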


For a local field $F$ and Whittaker functions $W_1$ on $\GL_2(F)$ and $W_2$ on $\GSp_4(F)$, define the local integral
\begin{equation}\label{eqn:localint} I(W_1, W_2,s,w) = \int_{E(f_3)'(F)E(f_2)(F)Z(F)\backslash H(F)}{W_1(g_1) W_2(g_2)f(\gamma_5 g,s ,w)\,dg}. \end{equation}
By the uniqueness of Whittaker models, the adelic integral (\ref{eqn:adelicint}) factorizes into the product of the local integrals $I(W_1,W_2,s,w)$.
\subsection{Unipotent integration} \label{subsec:unipotent}
We begin the computation of the local integrals $I(W_1,W_2,s,w)$ for unramified data by computing a certain unipotent integral.  In this subsection, $F = \mathbf{Q}_p$ for a rational prime $p$.  We assume that the local components $\pi_{1,p}$ and $\pi_{2,p}$ are unramified.  Since everything is local, we write $f(g,s,w)$ for the section $f_{p,s,w}$.  We also write $K = G(\mathbf{Z}_p)$.

We have assumed that each $\pi_i$ has trivial central character, so we may consider $\pi_1$ and $\pi_2$ to be representations of $\PGL_2$ and $\PGSp_4$, respectively.  Let $c_1(p)$ denote the conjugacy class in the dual group $\SL_2(\C)$ associated to the local spherical representation $\pi_{1,p}$ of $\PGL_2(F)$ and let $c_2(p)$ denote the conjugacy class in the dual group $\Spin_5(\C)$ associated to the local spherical representation $\pi_{2,p}$ of $\PGSp_4(F)$.  Write $\omega^1$ for the fundamental weight of $\SL_2(\C)$ and $\omega^2_1, \omega^2_2$ for the fundamental weights of $\Spin_5(\C)$.  For nonnegative integers $m$ and $n$, define $A_1[m]$ to be trace of $c_1(p)$ on the representation of $\SL_2(\C)$ with heighest weight $m \omega^1$ and define $B_2[m,n]$ to be the trace of $c_2(p)$ on the representation of $\Spin_5(\C)$ with highest weight $m \omega^2_1 + n \omega^2_2$.

The purpose of this section is to prove the following result.
\begin{thm} \label{thm:localintegral} Set $U = |p|^{w-2}$ and $V = |p|^s$.  Then $\zeta(w+2s-2)\zeta(w-1)\zeta(w-2s)I(W_1,W_2,s,w)$ is equal to
\begin{align} &\sum_{\substack{a,b,c \ge 0\\a \le c \le 2a}} U^{c-a} V^c(1+U+\dots+U^{2a-c})\(\sum_{\subalign{0 \le &e \le b\\0 \le &f}} U^{b-e+f}(V^2)^{e+f}\) A_1[2a-c]B_2[b,c] \label{eqn:localintegral}\\
	+&\sum_{\substack{a,b,c \ge 0\\c < a \le b+c}} U^{a-c} V^{2a-c}(1+U+\dots+U^c)\(\sum_{\subalign{0 \le &e \le -a+b+c\\0 \le &f}} U^{-a+b+c-e+f}(V^2)^{e+f}\) A_1[2a-c]B_2[b,c].\nonumber
\end{align}
\end{thm}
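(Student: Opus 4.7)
I would begin with the Iwasawa decomposition $H(F) = B_H(F) K_H$, where $B_H = T_H U_H$ is the upper-triangular Borel of $H = \GL_2 \boxtimes \GSp_4$ and $K_H = H(\mathbf{Z}_p)$. Since $\pi_{1,p}$ and $\pi_{2,p}$ are unramified, the Whittaker functions are right $K_H$-invariant, and (after checking that $\gamma_5$ interacts nicely with $K$) the section $f(\gamma_5 \cdot, s, w)$ is also $K_H$-invariant. After quotienting by $Z(F) E(f_3)'(F) E(f_2)(F)$, the integral $I(W_1,W_2,s,w)$ reduces to an integral over a four-dimensional torus $Z(F) \backslash T_H(F)$ together with the residual unipotent variables in $U_H$ modulo $E(f_3)' E(f_2)$.

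Next, I would compute $f(\gamma_5 u t, s, w)$ for a torus element $t \in T_H$ and a remaining unipotent element $u$. The strategy is to conjugate $\gamma_5 u \gamma_5^{-1}$ into a convenient form inside $\GSp_6$, then find the Iwasawa decomposition of $\gamma_5 u t$ with respect to $P$. This yields a product of (i) the character $\chi_{w,s}$ applied to the Levi component, which depends on $t$ alone, and (ii) a characteristic function expressing $p$-integrality of certain polynomial expressions in the unipotent coordinates and the torus parameters. Performing the finite-dimensional unipotent integration against the Whittaker characters $\chi_1$ and the appropriate characters in (\ref{eqn:whit1}), (\ref{eqn:whit2}) then produces, for each torus element, a volume factor that is piecewise polynomial in $U = |p|^{w-2}$ and $V = |p|^s$.

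At this point, I would invoke the Casselman--Shalika formula on $\GL_2$ and $\GSp_4$: $W_{\phi_1}$ and $W_{\phi_2}$ evaluated at a dominant torus element are, up to the modular character $\delta_{B_H}^{1/2}$, exactly the characters $A_1[\cdot]$ and $B_2[\cdot,\cdot]$ of representations of $\SL_2(\mathbf{C})$ and $\Spin_5(\mathbf{C})$, parametrized by nonnegative integers. Restricting to the dominant cone and summing over the torus coordinates, I would compare the resulting expression to the right-hand side of (\ref{eqn:localintegral}). Several of the coordinates give unrestricted geometric series in $U$ and $V$; these sum to $\zeta(w+2s-2)^{-1}$, $\zeta(w-1)^{-1}$, and $\zeta(w-2s)^{-1}$, which is why the statement pairs $I(W_1,W_2,s,w)$ with exactly these three zeta factors on the left-hand side.

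The main obstacle is the combinatorics of the Iwasawa step: because $\gamma_5$ mixes the $V_1$ and $V_2$ components of the flag used to define $P$, the support conditions on the unipotent integrand depend in a nonlinear way on the torus exponents. The bifurcation into the two ranges $a \le c \le 2a$ and $c < a \le b+c$, and the two different finite geometric factors $1 + U + \cdots + U^{2a-c}$ versus $1 + U + \cdots + U^c$, will arise precisely from which of these support inequalities is binding. Carefully organizing this case split, and verifying that the leftover infinite sums in $e, f$ (reflecting two unipotent directions that remain only bounded below after the support analysis) match the claimed formula, is the delicate bookkeeping step that carries the proof.
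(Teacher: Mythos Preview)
Your plan is essentially the paper's own proof: Iwasawa decomposition on $H$, explicit evaluation of the section on $\gamma_5 u t$, unipotent integration via Tate-type lemmas, and Casselman--Shalika on the torus. The only refinements worth noting are that the paper writes the spherical section invariantly as $f'(g,s,w)=|\det_3 g|^{-2s}|\det_2 g|^{2s-w}|\mu(g)|^{s+w}$ (a product of powers of maxima of minors, not a characteristic function), and the bifurcation into the two displayed sums comes from comparing two torus valuations $|\alpha|$ and $|\gamma|$ in those minors rather than from support conditions on the unipotent variables; two of the three $\zeta$-factors are produced by the $x,y,z$ integrals while $\zeta(w+2s-2)$ is simply the unrestricted $f$-sum on the right.
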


Set $P' = \gamma_5^{-1}P\gamma_5$ and let $F'$ be the flag $F_2' = \gen{f_1 + f_3, e_1 - e_3} \subseteq \gen{f_1 +f_3, e_1 - e_3,f_2} = F_3'$. Then $P'$ is the parabolic subgroup of $\GSp_6$ stabilizing $F'$.  Furthermore, set $\chi'_{s,w}(g) = \chi(\gamma_5 g \gamma_5^{-1})$, and $f'(g,s,w) = f(\gamma_5 g, s,w)$, which is a section in $\ind_{P'}^{\GSp_6}(\chi'_{s,w})$.  The next lemma describes the section $f'(g,s,w) \in \ind_{P'}^G(\chi'_{s,w})$ in an invariant form.

We define two functions $\det_2,\det_3 : G(\mathbf{Q}_p) \rightarrow \mathbf{Q}_p^\times$ as follows.  First, write the element $g \in G(\mathbf{Q}_p)$ as a $6 \times 6$ matrix using the ordered basis
\[\set{e_3, -f_1, e_2, f_2, e_1-e_3, f_1 + f_3}\]
via the right action of $\GSp_6$ on $V=W_6$.  Define $|\det_2 g|$ to be the maximum absolute value of the $2\times 2$ minors coming from the last $2$ rows of $g$.  Similarly, define $|\det_3 g|$ to be the maximum absolute value of the $3\times 3$ minors coming from the last $3$ rows of $g$.  
\begin{lemma}  Suppose that $g \in \GSp_6(\mathbf{Q}_p)$ and that $f'(g,s,w)$ is the unique $K$-spherical element of $\ind_{P'}^{\mathrm{GSp}_6}(\chi'_{s,w})$ such that $f'(1,s,w)=1$.  Then $f'(g,s,w) = |\det_3 g|^{-2s}|\det_2 g|^{2s-w} |\mu(g)|^{s+w}$. \end{lemma}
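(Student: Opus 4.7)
The plan is to invoke the Iwasawa decomposition $\GSp_6(\mathbf{Q}_p) = P'(\mathbf{Q}_p) K$ together with the uniqueness of the normalized spherical vector in $\ind_{P'}^{\GSp_6}(\chi'_{s,w})$. It therefore suffices to verify that the function $F(g) := |\det_3 g|^{-2s} |\det_2 g|^{2s-w} |\mu(g)|^{s+w}$ is right $K$-invariant, satisfies $F(p'g) = \chi'_{s,w}(p') F(g)$ for $p' \in P'(\mathbf{Q}_p)$, and takes the value $1$ at the identity. The first and third properties are immediate: the last $i$ rows of $gk$ equal the last $i$ rows of $g$ right-multiplied by $k \in \GL_6(\mathbf{Z}_p)$, so their $i \times i$ minors are $\mathbf{Z}_p$-linear combinations of the corresponding minors of $g$ (with the reverse inequality obtained from $k^{-1}$), and $\mu(K) \subseteq \mathbf{Z}_p^\times$; meanwhile, the relevant minors of the identity and $|\mu(1)|$ are all $1$.

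The substantive point is the transformation under $P'$. The key observation is the invariant interpretation: $|\det_i g|$ is the sup norm of the Plücker coordinates of the subspace $F_i' \cdot g$ computed in the ambient basis $\{e_3, -f_1, e_2, f_2, e_1 - e_3, f_1 + f_3\}$, where $F_2' = \gen{f_1+f_3,e_1-e_3}$ and $F_3' = \gen{f_2,f_1+f_3,e_1-e_3}$. For $p' \in P'$ the subspace $F_i' \cdot p'g$ coincides set-theoretically with $F_i' \cdot g$, but the distinguished spanning frame obtained by applying $p'g$ to the chosen basis of $F_i'$ differs from the one obtained from $g$ by the endomorphism $p'|_{F_i'}$, so by multilinearity of the $i \times i$ determinant
\[ |\det_i(p'g)| = |\det(p'|_{F_i'})| \cdot |\det_i(g)| \qquad (i=2,3), \]
while $|\mu(p'g)| = |\mu(p')| \cdot |\mu(g)|$.

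It remains to identify this scaling with $\chi'_{s,w}(p')$. Using the relation $\chi'_{s,w}(p') = \chi_{s,w}(\gamma_5 p' \gamma_5^{-1})$, this reduces to a calculation on $P$ via (\ref{eqn:defp}): the intrinsic action of $p \in P$ on $F_2$ is the bottom-right block $\mu m_1^*$ of determinant $\mu^2/\det(m_1)$, and the action on $F_3/F_2$ is the scalar $\mu m_2^{-1}$. Substituting these into $\chi_{s,w}(m_1,m_2,\mu) = |\det m_1|^w |m_2|^{2s} |\mu|^{-s-w}$ and simplifying reproduces the scaling of $F$ computed above, which completes the verification. The only (minor) obstacle is precisely this last bookkeeping step, since (\ref{eqn:defp}) parametrizes the Levi using $m_1$ as the symplectic dual of the intrinsic action on $F_2$ twisted by the similitude; the resulting powers of $|\mu|$ must be tracked carefully to see that they cancel to give the stated formula.
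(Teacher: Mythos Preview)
Your proof is correct and follows exactly the approach of the paper, only in much greater detail: the paper's own proof is the single sentence ``This has the correct restriction to $P'$, and is right $K$-invariant.'' Your expansion of the $P'$-equivariance via the Pl\"ucker interpretation of $|\det_i g|$ and the explicit determinant computation $\det(p|_{F_2}) = \mu^2/\det(m_1)$, $\det(p|_{F_3}) = \mu^3/(m_2\det m_1)$ is precisely what lies behind that sentence, and the final assembly of exponents does indeed recover $|\det m_1|^{w}|m_2|^{2s}|\mu|^{-s-w}$.
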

\begin{proof} This has the correct restriction to $P'$, and is right $K$-invariant. \end{proof}

For $x, y,z\in F$, define $u(x,y,z)\in H(F)$ to be the element $(u_1(z),u_2(y,z))$, where the notation is as follows.  We set $u_1(z) = \mm{1}{z}{}{1} \in \GL_2(F)$.  Using the ordered basis $\set{e_2, e_3, f_3,f_2}$, the element $u_2(x,y) \in \GSp_4(F)$ is defined by
\[u_2(x,y) = \left(\begin{array}{cccc} 1&x&y& 0\\ & 1&0 &y\\ & & 1&-x \\ &&&1\end{array}\right).\]

Denote by $T$ the diagonal maximal torus of $H$, which is also a maximal torus of $\GSp_6$.  Let $t \in T$.  With this notation, we set
\begin{equation}\label{f'Def}f'_\psi(t,s,w) = \int_{x,y,z \in F}{\psi(z)\psi(x)f'(u(x,y,z)t,s,w)\,dx\,dy\,dz}.\end{equation}
As before, denote by $B_1$ the upper triangular Borel of $\GL_2$ and by $B_2$ the upper triangular Borel of $\GSp_4$.  Further, set $K_1(t_1) = \delta_{B_1}^{-1/2}(t_1)W_1(t_1)$ and $K_2(t_2) = \delta_{B_2}^{-1/2}(t_2)W_2(t_2)$.  

Then by the Iwasawa decomposition,
\[I(W_1,W_2,s,w) = \int_{(T/Z)(F)}{\delta_{B_1}^{-1/2}(t_1)\delta_{B_2}^{-1/2}(t_2)f'_\psi((t_1,t_2),s,w) K_1(t_1)K_2(t_2)\,dt}.\]

We must compute $f'_\psi$.  We may write a representative for $[t] \in (T/Z)(F)$ in the form $t = \mathrm{diag}(\alpha \beta, \beta^2 \gamma, \beta \gamma, \beta, 1, \alpha^{-1}\beta \gamma)$.  Then the map $ut$ has the action
\begin{align*} f_1 + f_3 &\mapsto \beta(f_1+f_3)+ \beta(\alpha^{-1} \gamma - 1)f_1 - x f_2 \\ e_1 - e_3 &\mapsto \alpha\beta(e_1 -e_3) + \beta(\alpha- \gamma) e_3 + \alpha^{-1}\beta \gamma z f_1 + yf_2 \\ f_2 &\mapsto f_2 \end{align*}
on the vectors generating $F'$.  In other words, computing in the ordered basis $e_3, -f_1, e_2,f_2, e_1 - e_3, f_1 + f_3$, the bottom three rows of the matrix $ut$ are
\[\left(\begin{array}{ccc|ccc} 0&0&0&1&0&0 \\ \beta(\alpha-\gamma) &-\alpha^{-1}\beta \gamma z & 0 &y &\alpha \beta&0 \\ 0&\beta(1-\alpha^{-1}\gamma)&0&-x&0&\beta \end{array}\right).\]
Using row operations, this $3 \times 6$ matrix is right-$K$-equivalent to
\begin{equation}\label{3by6matrix}\left(\begin{array}{ccc|ccc} 0&0&0&1&0&0 \\ \beta\gamma &\alpha^{-1}\beta \gamma z & 0 &y &\alpha \beta&0 \\ 0&\alpha^{-1}\beta\gamma&0&-x&0&\beta \end{array}\right).\end{equation}
By using (\ref{3by6matrix}), we can compute $|\det_2 ut|$ and $|\det_3 ut|$.
\begin{lemma} If $|\gamma| \leq |\alpha|$, $|\det_3 ut| = |\alpha\beta^2| \max \{1, |z_0|\}$ where $z=\alpha^2 \gamma ^{-1} z_0$.  If $|\gamma| \geq |\alpha|$, $|\det_3 ut| = |\alpha^{-1}\beta^2 \gamma^2| \max \{1, |z_0|\}$ where $z = \gamma z_0$. \end{lemma}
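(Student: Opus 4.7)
The plan is to read off $|\det_3 ut|$ directly from the right-$K$-equivalent matrix in (\ref{3by6matrix}) by enumerating its $3 \times 3$ minors, and then simplify the resulting maximum under the two case hypotheses on $|\gamma|$ and $|\alpha|$.

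First, I would note that the third column of (\ref{3by6matrix}) vanishes, so there are only $\binom{5}{3}=10$ potentially nonzero minors, obtained by choosing three of the columns indexed by $\{1,2,4,5,6\}$ (in the ordering $e_3,-f_1,e_2,f_2,e_1-e_3,f_1+f_3$). A direct cofactor expansion along the sparse rows kills six of these minors, and leaves exactly five nonzero values, namely
\[ \alpha^{-1}\beta^2\gamma^2,\qquad -\beta^2\gamma,\qquad \beta^2\gamma,\qquad -\alpha^{-1}\beta^2\gamma z,\qquad \alpha\beta^2,\]
coming respectively from the column sets $\{1,2,4\}$, $\{1,4,6\}$, $\{2,4,5\}$, $\{2,4,6\}$, and $\{4,5,6\}$. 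Therefore
\[|\det_3 ut| \;=\; |\beta|^2 \cdot \max\bigl(|\alpha|^{-1}|\gamma|^2,\; |\gamma|,\; |\alpha|^{-1}|\gamma||z|,\; |\alpha|\bigr).\]

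Next I would dispose of the two cases. If $|\gamma| \leq |\alpha|$, then $|\alpha|^{-1}|\gamma|^2 \leq |\gamma| \leq |\alpha|$, so the first three terms inside the max collapse to at most $|\alpha|$, leaving $|\det_3 ut| = |\beta|^2 \max(|\alpha|, |\alpha|^{-1}|\gamma||z|)$. Substituting $z = \alpha^2\gamma^{-1}z_0$ gives $|\alpha|^{-1}|\gamma||z|=|\alpha||z_0|$, and factoring out $|\alpha|$ yields the stated formula $|\alpha\beta^2|\max(1,|z_0|)$. If instead $|\gamma|\geq|\alpha|$, then $|\alpha|^{-1}|\gamma|^2$ dominates the other constant terms, and substituting $z = \gamma z_0$ gives $|\alpha|^{-1}|\gamma||z| = |\alpha|^{-1}|\gamma|^2|z_0|$, producing $|\alpha^{-1}\beta^2\gamma^2|\max(1,|z_0|)$.

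The only step with any room for error is the minor enumeration, where sign and cofactor bookkeeping must be done carefully; but since most minors vanish for structural reasons (the zero column and the sparsity of rows 1 and 3 in (\ref{3by6matrix})), the computation is short and the remaining task is just the non-archimedean max calculation, which is routine given the two case assumptions.
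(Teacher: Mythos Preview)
Your proof is correct and follows essentially the same approach as the paper: compute the nonzero $3\times 3$ minors of the matrix (\ref{3by6matrix}), take their maximum absolute value, and simplify under each case hypothesis. The paper is terser (it just writes ``Computing the various minors, one obtains\ldots'' and lists the same five absolute values), but the content is identical. One trivial miscount: of the ten minors avoiding column~3, exactly five vanish (the four not containing column~4, plus $\{1,4,5\}$), not six---but this has no bearing on the argument.
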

\begin{proof} Computing the various minors, one obtains 
\begin{align*} |\mathrm{det}_3\,ut| &= \max \{ |\alpha \beta^2|, |\beta^2 \gamma|, |\alpha^{-1}\beta^2\gamma^2|, |\beta^2\gamma|, |\alpha^{-1}\beta^2\gamma z|\} \\ &= |\beta^2| \max \{|\alpha|,|\gamma|, |\alpha^{-1}\gamma^2|,|\alpha^{-1}\gamma z|\}. \end{align*}
The lemma follows. \end{proof}
\begin{lemma} If $|\gamma| \leq |\alpha|$, 
\[|\mathrm{det}_2\,ut| = |\alpha \beta^2| \{1,|x_0|,|y_0|, |z_0|, |x_0 z_0|\}\]
where $x = \beta x_0$, $y = \alpha \beta y_0$, and $z = \alpha^2 \gamma^{-1}z_0$.  If $|\gamma| \geq |\alpha|$, 
\[|\mathrm{det}_2\,ut| = |\alpha^{-1}\beta^2 \gamma^2| \max\{1, |x_0|, |y_0 + \alpha^{-1}\gamma x_0z_0|, |z_0|, |x_0z_0|\}\]
where $x = \alpha^{-1}\beta \gamma x_0$, $y = \beta \gamma y_0$, and $z = \gamma z_0$. \end{lemma}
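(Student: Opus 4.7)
The plan is a direct computation of the $2 \times 2$ minors of the last two rows of the matrix \eqref{3by6matrix}, followed by an elementary analysis of their maximum under each case hypothesis on $|\gamma|$ versus $|\alpha|$.

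First I would list all $2 \times 2$ minors of the bottom two rows of the $3 \times 6$ matrix in \eqref{3by6matrix}. Since its third column vanishes, only nine of the fifteen minors are nonzero; they are, up to sign, $\alpha^{-1}\beta^2\gamma^2$, $\beta\gamma x$, $\beta^2\gamma$, $\alpha^{-1}\beta\gamma(xz+y)$, $\alpha^{-1}\beta^2\gamma z$, $\alpha\beta x$, $\beta y$, and $\alpha\beta^2$. In each case I would substitute the rescaled variables $x_0,y_0,z_0$ from the statement and factor out the dominant scalar: $|\alpha\beta^2|$ when $|\gamma| \leq |\alpha|$ and $|\alpha^{-1}\beta^2\gamma^2|$ when $|\gamma| \geq |\alpha|$.

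In Case 1, after substitution, the maximum of the minors divided by $|\alpha\beta^2|$ becomes the max of $\{|\alpha^{-2}\gamma^2|,\ |\alpha^{-1}\gamma x_0|,\ |\alpha^{-1}\gamma|,\ |x_0 z_0 + \alpha^{-1}\gamma y_0|,\ |z_0|,\ |x_0|,\ |y_0|,\ 1\}$. Using $|\gamma| \leq |\alpha|$, the first three entries are absorbed into $\{1, |x_0|\}$, and the entry $|x_0 z_0 + \alpha^{-1}\gamma y_0|$ can be interchanged with $|x_0 z_0|$ in the maximum via the ultrametric identity $x_0 z_0 = (x_0 z_0 + \alpha^{-1}\gamma y_0) - \alpha^{-1}\gamma y_0$ together with $|\alpha^{-1}\gamma y_0| \leq |y_0|$.

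Case 2 is analogous but more delicate: after factoring out $|\alpha^{-1}\beta^2\gamma^2|$ the surviving relevant terms are $\{1, |x_0|, |y_0 + \alpha^{-1}\gamma x_0 z_0|, |z_0|, |\alpha\gamma^{-1} y_0|\}$, while the statement replaces $|\alpha\gamma^{-1} y_0|$ by $|x_0 z_0|$. The main obstacle is justifying this swap: no single minor directly contributes $|x_0 z_0|$, so the term arises only after taking the maximum. One applies the ultrametric inequality to the identities
\[y_0 = (y_0 + \alpha^{-1}\gamma x_0 z_0) - \alpha^{-1}\gamma x_0 z_0, \qquad \alpha^{-1}\gamma x_0 z_0 = (y_0 + \alpha^{-1}\gamma x_0 z_0) - y_0,\]
multiplied through by $|\alpha\gamma^{-1}| \leq 1$, to obtain two-sided bounds showing the two maxima agree. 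Keeping careful track of which combination survives in each case is the main bookkeeping challenge and explains the asymmetry between the two formulas.
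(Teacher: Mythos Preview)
Your proposal is correct and follows essentially the same approach as the paper: compute the $2\times 2$ minors of the last two rows of \eqref{3by6matrix}, then use the ultrametric inequality together with the hypothesis on $|\gamma|$ versus $|\alpha|$ to reduce the list and substitute the rescaled variables. The paper's proof is terser---it factors out $|\alpha^{-1}\beta\gamma|$ before substituting and simply asserts the reduced lists in each case---whereas you substitute first and spell out the ultrametric swaps (in particular the replacement of $|\alpha\gamma^{-1}y_0|$ by $|x_0 z_0|$ in Case~2) more explicitly; but the content is the same.
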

\begin{proof} Similar to above, we compute the various minors to obtain
\[|\mathrm{det}_2\,ut| = |\alpha^{-1} \beta \gamma| \max\{ |\beta \gamma|, |\alpha x|, |\alpha \beta|, |y + xz|, |\beta z| ,|\alpha^2\gamma^{-1} x|, |\alpha \gamma^{-1} y|, |\alpha^2\beta \gamma^{-1}| \}.\]
If $|\gamma | \leq |\alpha|$, this is
\[|\alpha^{-1}\beta \gamma| \max \{ |xz|,|\beta z|, |\alpha^2 \gamma^{-1} x|,|\alpha \gamma^{-1} y|, |\alpha^2 \beta \gamma^{-1}|\}.\]
If instead $|\gamma| \geq |\alpha|$, this is
\[|\alpha^{-1}\beta \gamma| \max \{|\beta \gamma|, |\alpha x|, |y+xz|, |\beta z|, |\alpha \gamma^{-1} xz|\}.\]
The lemma follows.
\end{proof}

Assume $|\gamma| \leq |\alpha|$. Combining the above lemmas, and making the variable changes $x = \beta x_0$, $y = \alpha \beta y_0$, and $z = \alpha^2 \gamma^{-1}z_0$, one gets
\begin{align}\label{gammaSmall} &f'_\psi(t,s,w) = |\alpha^3 \beta^2 \gamma^{-1}| |\alpha \beta^2|^{-w} |\beta^2 \gamma|^{w+s} \nonumber \\ & \quad \times \int_{x,y,z}{\psi(\alpha^2 \gamma^{-1} z) \psi(\beta x)\max\{1,|z|\}^{-2s} \max\{1, |x|,|y|,|z|,|xz|\}^{-(w-2s)}\,dxdydz}. \end{align}
If $|\alpha| \leq |\gamma|$, by making the variable change $x = \alpha^{-1}\beta \gamma x_0$, $y = \beta \gamma y_0$, and $z = \gamma z_0$, one gets
\begin{align}\label{alphaSmall} &f'_\psi(t,s,w) = |\alpha^{-1} \beta^2 \gamma^3| |\alpha^{-1}\beta^2 \gamma^2|^{-w}|\beta^2 \gamma|^{w+s} \nonumber \\ & \quad \times \int_{x,y,z}{\psi(\gamma z) \psi(\alpha^{-1}\beta \gamma x) \max\{1,|z|\}^{-2s} \max \{1,|x|, |y+\alpha^{-1}\gamma xz|, |z|, |xz|\}^{-(w-2s)}\,dxdydz}.\end{align}

The proofs of the following two lemmas are straightforward.  Here, $\zeta(\cdot)$ denotes the Euler factor at $p$ of the usual $\zeta$ function.
\begin{lemma}\label{y lemma} The integral
\[\int_{F}{\max\{|c|,|y|\}^{-u}\,dy} = |c|^{1-u} \frac{\zeta(u-1)}{\zeta(u)}.\]
\end{lemma}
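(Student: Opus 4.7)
The plan is to carry out the integral by the standard non-Archimedean trick of decomposing the domain according to the valuation of $y$, and then summing a geometric series. Normalize the additive Haar measure on $F=\mathbf{Q}_p$ so that $\mathbf{Z}_p$ has volume $1$; then the ball $\{y: |y|\leq |c|\}$ has volume $|c|$, and the sphere $\{y:|y|=p^{-k}\}$ has volume $p^{-k}(1-p^{-1})$ for each $k\in\mathbf{Z}$. Writing $|c| = p^{-n}$, I would split the integral into the region where $|y|\leq |c|$ and the region where $|y|>|c|$.

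On the first region the integrand equals $|c|^{-u}$ and we pick up volume $|c|$, giving the contribution $|c|^{1-u}$. On the second region $\max\{|c|,|y|\} = |y|$, so the contribution is
\[
(1-p^{-1})\sum_{k\le n-1} p^{-k}\cdot p^{ku} = (1-p^{-1})\sum_{k\le n-1} p^{k(u-1)},
\]
which for $\mathrm{Re}(u)>1$ is a convergent geometric series equal to
\[
(1-p^{-1})\cdot \frac{p^{(n-1)(u-1)}}{1-p^{1-u}}.
\]
Using $p^{-n}=|c|$, this simplifies to $|c|^{1-u}\cdot (1-p^{-1})p^{1-u}/(1-p^{1-u})$.

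Adding the two contributions and clearing the common denominator, one obtains
\[
|c|^{1-u}\cdot\frac{(1-p^{1-u}) + (1-p^{-1})p^{1-u}}{1-p^{1-u}} = |c|^{1-u}\cdot \frac{1-p^{-u}}{1-p^{1-u}}.
\]
Recalling that the local factor is $\zeta(s)=(1-p^{-s})^{-1}$, the right-hand side is exactly $|c|^{1-u}\,\zeta(u-1)/\zeta(u)$, which is the claim; the identity then extends to all $u$ by meromorphic continuation. There is no real obstacle here: the computation is a routine adaptation of the standard evaluation of $\int_F \max\{1,|y|\}^{-u}dy$, and the only slight bookkeeping point is keeping track of the valuation $n$ of $c$ so that the geometric sum produces the $|c|^{1-u}$ factor cleanly.
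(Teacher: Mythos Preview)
Your computation is correct and is exactly the standard argument one would supply here; the paper itself omits the proof entirely, calling it straightforward, so your approach is the expected one.
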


\begin{lemma}\label{x lemma} The integral
\[\int_{F}{\psi(ax)\max\{1,|x|\}^{-u}\,dx} = \begin{cases} 0 &\mbox{if } |a| > 1 \\ \frac{1}{\zeta(u)}(1 + |p|^{u-1} + \cdots + |p|^{(u-1)\mathrm{ord}_p(a)}) &\mbox{if } |a| \leq 1. \end{cases}\]
\end{lemma}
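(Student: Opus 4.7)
The plan is to decompose the integration domain $F = \mathbf{Q}_p$ by the $p$-adic valuation of $x$. Since $\max\{1,|x|\}^{-u}$ is identically $1$ on $\mathbf{Z}_p$ and equals $p^{-ku}$ on the shell $\{|x| = p^k\}$ for $k \ge 1$, the integral splits as
\[\int_F \psi(ax)\max\{1,|x|\}^{-u}\,dx = \int_{\mathbf{Z}_p}\psi(ax)\,dx + \sum_{k\ge 1} p^{-ku}\int_{|x|=p^k} \psi(ax)\,dx.\]
Each shell $\{|x|=p^k\}$ is the set-theoretic difference of the balls $\{|x|\le p^k\}$ and $\{|x|\le p^{k-1}\}$, and the standard formula $\int_{|x|\le p^m}\psi(ax)\,dx = p^m\,\mathbf{1}(|a|\le p^{-m})$ (which comes from $\psi$ being a conductor-$1$ character, trivial on $\mathbf{Z}_p$ but not on $p^{-1}\mathbf{Z}_p$) reduces the whole computation to manipulating indicator functions.

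If $|a| > 1$, then every indicator $\mathbf{1}(|a|\le p^{-m})$ with $m \ge 0$ vanishes, killing every term and giving $0$ as claimed.

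Otherwise set $n = \mathrm{ord}_p(a) \ge 0$, so $|a| = p^{-n}$. The ball-difference formula then evaluates the shell integral at level $k$ to $p^{k-1}(p-1)$ for $1 \le k \le n$, to $-p^n$ for the single ``boundary'' level $k = n+1$, and to $0$ for $k > n+1$. Adding in the contribution $1$ from $\mathbf{Z}_p$ and collecting the geometric series gives
\[1 + (1-p^{-1})\sum_{k=1}^n p^{k(1-u)} - p^{n(1-u)-u}.\]
The remaining step is to recognize this as $(1 - p^{-u})\sum_{j=0}^n p^{j(1-u)}$; this follows from the identities $p^{-u}-p^{-1} = p^{-1}(p^{1-u}-1)$ and $\sum_{k=1}^n p^{k(1-u)} = p^{1-u}(p^{n(1-u)}-1)/(p^{1-u}-1)$. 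Using $|p|^{u-1} = p^{1-u}$ and $\zeta(u)^{-1} = 1 - p^{-u}$ then matches the right-hand side of the lemma.

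The only point requiring any care is the boundary shell at $k = n+1$: it is the unique level where the two indicators in the ball-difference formula take different values, and the resulting $-p^n$ (equivalently the correction $-p^{-u}\cdot p^{n(1-u)}$) is precisely what allows $(1-p^{-u})$ to factor cleanly out of the sum. There is no serious obstacle beyond this bookkeeping; the lemma is of the expected ``Tate local zeta integral'' shape.
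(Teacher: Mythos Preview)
Your argument is correct and is exactly the standard computation one would carry out here; the paper itself omits the proof entirely, stating only that it is ``straightforward.'' Your decomposition into the ball $\mathbf{Z}_p$ and the shells $\{|x|=p^k\}$, together with the indicator identity $\int_{|x|\le p^m}\psi(ax)\,dx = p^m\,\mathbf{1}(|a|\le p^{-m})$ and the boundary term at $k=n+1$, is precisely the expected argument and fills in the details the authors left to the reader.
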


Assume $|\gamma| \leq |\alpha|$.  Integrating over $y$ first in (\ref{gammaSmall}) and applying Lemma \ref{y lemma}, we obtain
\begin{align*} f'_\psi(t,s,w) &= |\alpha^3 \beta^2 \gamma^{-1}| |\alpha \beta^2|^{-w} |\beta^2 \gamma|^{s+w} \frac{\zeta(w-2s-1)}{\zeta(w-2s)} \\ & \quad \times \int_{x,z}{\psi(\alpha^2 \gamma^{-1} z) \psi(\beta x)\max\{1,|z|\}^{-2s} \max\{1, |x|,|z|,|xz|\}^{1+2s-w}\,dxdydz}.\end{align*}
The integral in this last expression is
\[\left(\int_{x}{\psi(\beta x)\max\{1,|x|\}^{1+2s-w}\,dx}\right) \left(\int_{z}{\psi(\alpha^2 \gamma^{-1} z) \max\{1,|z|\}^{1-w}\,dz}\right),\]
which can then be evaluated by Lemma \ref{x lemma}.  One obtains, if $|\gamma| \leq |\alpha|$,
\begin{align*} f'_\psi(t,s,w) &= |\beta^2 \gamma|^{s} |\gamma \alpha^{-1}|^{w} |\alpha^3 \beta^2 \gamma^{-1}| (\zeta(w-2s)\zeta(w-1))^{-1} \\ & \quad \times \left(1 + |p|^{w-2} + \cdots + |p|^{(w-2)\ord_p(\alpha^2 \gamma^{-1})}\right)\left(1 + |p|^{w-2s-2} + \cdots + |p|^{(w-2s-2)\ord_p(\beta)}\right). \end{align*}

If $|\alpha| \leq |\gamma|$, then integrating over $y$ first in (\ref{alphaSmall}) and making the variable change $y \mapsto y - \alpha^{-1}\gamma xz$, we get
\begin{align*} f'_\psi(t,s,w) &= |\alpha^{-1}\beta^2 \gamma^3| |\alpha^{-1}\beta^2 \gamma^2|^{-w}|\beta^2 \gamma|^{w+s}\frac{\zeta(w-2s-1)}{\zeta(w-2s)} \\ & \quad \times \int_{x,z}{\psi(\gamma z) \psi(\alpha^{-1}\beta \gamma x) \max\{1,|z|\}^{1-w}\max\{1,|x|\}^{1+2s-w}\,dxdz}. \end{align*}
Hence when $|\alpha| \leq |\gamma|$, for the local integral to be nonvanishing one needs $|\beta| \leq |\alpha \gamma^{-1}|\leq 1$, and one gets
\begin{align*} f'_\psi(t,s,w)&= (\zeta(w-2s) \zeta(w-1))^{-1} |\alpha \gamma^{-1}|^{w}|\beta^2 \gamma|^{s} |\alpha^{-1}\beta^2 \gamma^{3}| \\ &\quad \times\left(1 + |p|^{w-2} + \cdots + |p|^{(w-2)\ord_p(\gamma)}\right)\left(1 + |p|^{w-2s-2} + \cdots + |p|^{(w-2s-2)\ord_p(\alpha^{-1}\beta \gamma)}\right).\end{align*}

Write 
\[I(W_1,W_2,s,w) = I_{|\gamma| \leq |\alpha|}(W_1,W_2,s,w) + I_{|\alpha| < |\gamma|}(W_1,W_2,s,w)\]
where the domain of the first integral in this sum is over the $\alpha,\beta,\gamma$ with $|\gamma|\leq |\alpha|$ and the domain of the second integral in this sum is over those $\alpha,\beta,\gamma$ with $|\alpha|< |\gamma|$.  

We have $\delta_{B_1}(t_1)^{-1/2}\delta_{B_2}(t_2)^{-1/2} = |\alpha \beta^2 \gamma|^{-1}$.  We conclude that 
\begin{align*} \zeta(w-2s) \zeta(w-1) I_{|\gamma| \leq |\alpha|}(W_1,W_2,s,w)
  =&\int_{\alpha,\beta,\gamma} |\beta^2 \gamma|^{s}|\gamma\alpha^{-1}|^{w-2}K_1[\ord_p\frac{\alpha^2}{\gamma}]K_2[\ord_p\beta,\ord_p\gamma]\\ &\times\left(1 + |p|^{w-2} + \cdots + |p|^{(w-2)\ord_p \frac{\alpha^2}{\gamma}}\right) \\ & \times \left(1 + |p|^{w-2s-2} + \cdots + |p|^{(w-2s-2)\ord_p\beta}\right)\,d\alpha d\beta d\gamma.\end{align*}
The above integral is over $\alpha, \beta, \gamma$ in $\mathcal{O}_F \cap F^{\times}$ with $|\gamma| \leq |\alpha|$.  Similarly, we obtain
\begin{align*} \zeta(w-2s) \zeta(w-1) I_{|\alpha| < |\gamma|}(W_1,W_2,s,w) =& \int_{\alpha,\beta,\gamma} |\beta^2 \gamma|^{s}|\alpha\gamma^{-1}|^{w-2}K_1[\ord_p\frac{\alpha^2}{\gamma}]K_2[\ord_p \beta,\ord_p \gamma] \\ & \times \left(1 + |p|^{w-2} + \cdots + |p|^{(w-2)\ord_p\gamma}\right) \\ & \times\left(1 + |p|^{w-2s-2} + \cdots + |p|^{(w-2s-2)\ord_p\frac{\beta \gamma}{\alpha}}\right)\,d\alpha d\beta d\gamma.\end{align*}
This integral is over $\alpha, \beta, \gamma$ in $\mathcal{O}_F \cap F^{\times}$ with $|\beta| \leq |\alpha/\gamma| < 1$.

Set $a = \ord_p\alpha$, $b = \ord_p\beta$, and $c = \ord_p\gamma$, and define $U = |p|^{w-2}$, $V = |p|^{s}$.  Finally, set $I'(W_1,W_2,s,w) = \zeta(w-1)\zeta(w-2s)I(W_1,W_2,s,w)$.  Then we have computed
\[I'(W_1,W_2,s,w) = I'_{a \leq c}(W_1,W_2,s,w) + I'_{c < a}(W_1,W_2,s,w),\]
where
\begin{align*} I_{a \leq c}'(W_1,W_2,s,w) &= \sum_{a,b,c \geq 0, a \leq c \leq 2a} U^{c-a}V^{2b+c}A_1[2a-c]B_2[b,c] (1+U + \cdots + U^{2a-c})\\ & \qquad \qquad \times \left(1 + \frac{U}{V^2} + \cdots + \left(\frac{U}{V^2}\right)^{b}\right)\end{align*}
and
\begin{align*} I_{c < a}'(W_1,W_2,s,w) &= \sum_{a,b,c \geq 0, b \geq a -c > 0} U^{a-c}V^{2b+c}A_1[2a-c]B_2[b,c] (1+U + \cdots + U^{c})\\ & \qquad \qquad \times \left(1 + \frac{U}{V^2} + \cdots + \left(\frac{U}{V^2}\right)^{b+c-a}\right).\end{align*}
This completes the proof of Theorem \ref{thm:localintegral}.
\section{Unramified calculation} \label{sec:unramified}

We first work with the right-hand side of the expression in (\ref{eqn:mainthm}).  By expanding the $L$-functions using symmetric algebra decompositions in Section \ref{subsec:symmalg} and evaluating the product of the $L$-functions using a Pieri rule in Section \ref{subsec:pieri}, we arrive at an explicit expression (\ref{eqn:lprod}) for the product of $L$-functions.

Our comparison of the right-hand side of (\ref{eqn:lprod}) with (\ref{eqn:localintegral}) is in three stages.  We first write the right-hand side of (\ref{eqn:localintegral}) in the form
\begin{align}\label{eqn:multdef} &\sum_{\substack{a,b,c \ge 0\\a \le c \le 2a}} \sum_{x,y \ge 0} m(x,y,a,b,c)U^{c-a+x}V^{c+2y}A_1[2a-c]B_2[b,c] \\+& \sum_{\substack{a,b,c \ge 0\\c < a \le b+c}} \sum_{x,y \ge 0} m(x,y,a,b,c)U^{a-c+x}V^{2a-c+2y}A_1[2a-c]B_2[b,c] \nonumber\end{align}
for an explicit coefficient function $m(x,y,a,b,c)$ in Section \ref{subsec:locintside}.  We then do the same to obtain a coefficient function $n(x,y,a,b,c)$ for the the right-hand side of (\ref{eqn:lprod}) in Section \ref{subsec:lfuncside}, and finally show that $m=n$ in Section \ref{subsec:compare}.

\subsection{Symmetric algebra decompositions} \label{subsec:symmalg}

In this section, we collect results from \cite{brion} and \cite[Appendix]{gpsr} that prove the following proposition, which writes each of the $L$-functions on the right-hand side of (\ref{eqn:mainthm}) as a series expansion in characters of the dual group $\SL_2(\mathbf{C})\times \Spin_5(\mathbf{C})$.
\begin{prop}
We have
\begin{align} \label{eqn:symmalg} &\frac{L(\pi_1 \times \pi_2, \mathbf{1} \times \mathrm{Std},w-2)L(\pi_1 \times \pi_2, \mathrm{Std} \times \mathrm{Spin},s)}{\zeta(2w-4)\zeta(2s)}\\ = &\(\sum_{k \ge 0} U^k B_2[k,0]\)\(\sum_{m,n\ge 0} V^{m+2n}A_1[m]B_2[n,m]\).\nonumber \end{align}
\end{prop}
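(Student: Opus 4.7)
The plan is to expand each local $L$-factor as a generating series of characters of symmetric powers of its representation, then invoke two symmetric-algebra decompositions to match the right-hand side.

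First, I would use the standard identity $L_p(T, r)(c) = \det(1 - T c \mid V_r)^{-1} = \sum_{k \ge 0} T^k \operatorname{ch}(\operatorname{Sym}^k V_r)(c)$, applied once with $T = U$ to the $5$-dimensional standard representation $V_{\omega^2_1}$ of $\Spin_5(\mathbf{C})$, and once with $T = V$ to the $8$-dimensional tensor $V_{\omega^1} \otimes V_{\omega^2_2}$. For the standard-$L$ factor the decomposition I need is $\operatorname{Sym}^k V_{\omega^2_1} = \bigoplus_{0 \le 2j \le k} V_{(k-2j)\omega^2_1}$, which is the classical spherical-harmonic decomposition: the $\Spin_5$-invariant quadratic form generates the invariants in $\operatorname{Sym}^\bullet V_{\omega^2_1}$, and the harmonic complement of degree $k$ is the irreducible of highest weight $k\omega^2_1$. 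Summing over $k$ and re-indexing so that the $j$-series factors as $(1-U^2)^{-1} = \zeta_p(2w-4)$ produces the first bracketed sum on the right-hand side.

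For the $\mathrm{Std}\times\mathrm{Spin}$ factor, the parallel calculation requires the symmetric-algebra decomposition
\[\operatorname{Sym}^N(V_{\omega^1} \otimes V_{\omega^2_2}) = \bigoplus_{\substack{m,n,j \ge 0\\ m+2n+2j=N}} V_{m\omega^1} \boxtimes V_{n\omega^2_1 + m\omega^2_2},\]
after which the $j$-series factors out as $\zeta_p(2s) = (1-V^2)^{-1}$ and the remainder equals $\sum_{m,n \ge 0} V^{m+2n} A_1[m] B_2[n,m]$. There is a unique $(\SL_2 \times \Spin_5)$-invariant of degree $2$, built from the $\SL_2$-invariant alternating form on $V_{\omega^1}$ together with the symplectic form on the $4$-dimensional spin representation; this plays the role of the quadratic form above, and one must verify that $\operatorname{Sym}^\bullet(V_{\omega^1}\otimes V_{\omega^2_2})$ is free over the polynomial ring it generates, with harmonic fibers multiplicity-free and cut out by the highest weights listed. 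This is exactly the type of computation carried out in Brion \cite{brion} and recorded in the appendix to \cite{gpsr}, so I would appeal to these references directly rather than reprove them.

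The main obstacle is the second symmetric-algebra decomposition: the first is elementary harmonics for an orthogonal group, but the second concerns a reducible pair acting on a reducible representation, and the multiplicity-free structure asserted there is not immediate from general principles. Once both decompositions are in hand, the remainder is routine bookkeeping: multiply the two simplified series, divide by $\zeta_p(2w-4)\zeta_p(2s)$, and compare term by term with the two bracketed sums in \eqref{eqn:symmalg}.
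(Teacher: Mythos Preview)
Your proposal is correct and follows essentially the same approach as the paper: both arguments reduce the identity to the two symmetric-algebra decompositions you name, citing Brion \cite{brion} for $\operatorname{Sym}^\bullet V_{\omega^2_1}$ and the appendix to \cite{gpsr} for $\operatorname{Sym}^\bullet(V_{\omega^1}\otimes V_{\omega^2_2})$, and then factor out $\zeta(U^2)$ and $\zeta(V^2)$ respectively. The only cosmetic difference is that the paper writes out the telescoping sum explicitly to extract $V^{m+2n}$ from the cited formula, whereas you fold that into ``routine bookkeeping''; either way the substance is identical.
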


\begin{proof}
Equivalently, we must show that
\begin{equation}\label{eqn:symalg1}\sum_{\ell \ge 0} \mathrm{Sym}^\ell(B_2[1,0])U^\ell = \zeta(U^2)\sum_{k \ge 0} B_2[k,0]U^k\end{equation}
and
\begin{equation}\label{eqn:symalg2}\sum_{\ell \ge 0} \mathrm{Sym}^\ell(A_1[1]B_2[0,1])V^\ell = \zeta(V^2)\sum_{m,n \ge 0} A_1[m]B_2[n,m]V^{n+2m}.\end{equation}
The symmetric algebra decomposition for $B_2[1,0]$ is given by the table in \cite{brion}.  It has a generator of weight 2 given by the trivial representation, so we may factor it out to get the zeta factor $\zeta(U^2)$, giving (\ref{eqn:symalg1}).

For the second equality, (A.1.3) of \cite[Appendix]{gelbartPSR} calculates
\[\mathrm{Sym}^\ell(A_1[1]B_2[0,1]) = \sum_{\substack{0 \le j \le \ell\\j \equiv \ell\ (\textrm{mod }2)}} \sum_{0 \le i \le \frac{j}{2}} A_1[j-2i]B_2[i,j-2i].\]
We have $\zeta(V^2)^{-1} = 1-V^2$, so to prove (\ref{eqn:symalg2}), we need to check the identity of power series expansions
\[(1-V^2)\sum_{\ell \ge 0} \sum_{\substack{0 \le j \le \ell\\j \equiv \ell\ (\textrm{mod }2)}} \sum_{0 \le i \le \frac{j}{2}} A_1[j-2i]B_2[i,j-2i]V^\ell = \sum_{m,n \ge 0} A_1[m]B_2[n,m]V^{n+2m}.\]
To verify this equality, it suffices to show that the coefficient of $A_1[m]B_2[n,m]$ on the left is $V^{n+2m}$.  We distribute the multiplication on the left so that the summand becomes $A_1[j-2i]B_2[i,j-2i](V^\ell-V^{\ell+2})$.  If $A_1[j-2i]B_2[i,j-2i] = A_1[m]B_2[n,m]$, then $j-2i = m$ and $i = n$, so $j = m+2n$.  The possible values of $\ell$ are all values greater than or equal to $j$ and congruent to $j$ modulo 2, so the coefficient is $\sum_{k \ge 0} (V^{j+2k} - V^{j+2k+2})$, which telescopes to $V^j = V^{n+2m}$.
\end{proof}

\subsection{Orthogonal Pieri rules} \label{subsec:pieri}

In this section, we apply orthogonal Pieri rules in order to expand the right hand side of (\ref{eqn:symmalg}).  The results may be summarized in the following proposition.

\begin{prop} \label{prop:pieri}

We have
\begin{align} &\(\sum_{k \ge 0} U^k B_2[k,0]\)\(\sum_{m',n\ge 0} V^{m'+2n}A_1[m']B_2[n,m']\)\nonumber\\
  \label{eqn:lprod} =&\sum_{k,m,n \ge 0} \sum_{\epsilon \in \set{0,1}} \sum_{\alpha,\beta,i} U^k V^{2m+2n} A_1[2m] B_2[2\alpha+k-n-2m-\epsilon-2i,2\beta+2i]\\
	+&\sum_{k,m,n \ge 0} \sum_{\epsilon \in \set{0,1}} \sum_{\alpha,\beta,i} U^k V^{2m+2n+1} A_1[2m+1]B_2[2\alpha+k-n-2m-\epsilon-2i,2\beta+2i+1], \nonumber
\end{align}
where the sums over $\alpha,\beta,$ and $i$ on the right-hand side are subject to the conditions
\begin{align}
  \label{eqn:cond1} m \le & \alpha \le m+n\\
  \label{eqn:cond2} \underline{\epsilon} \le & \beta \le m\\
	\label{eqn:cond3} 0 \le & \, i \le \min(\alpha-\beta,k-2m-n+\alpha+\beta-\epsilon).
\end{align}
For brevity, we have written $\underline{\epsilon}$ to denote $\epsilon$ for the first sum on the right-hand side of (\ref{eqn:lprod}) and 0 for the second sum.

\end{prop}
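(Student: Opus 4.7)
The plan is to evaluate the left-hand side of (\ref{eqn:lprod}) by applying an orthogonal Pieri rule to expand each product $B_2[k,0] \otimes B_2[n, m']$ as a direct sum of irreducible $\Spin_5(\mathbf{C})$-representations, and then reorganize the resulting triple sum according to the output weight. The representation $B_2[k,0]$ is the $k$th harmonic symmetric power of the standard 5-dimensional representation, so the decomposition of $B_2[k,0] \otimes B_2[n,m']$ is multiplicity free and governed by explicit interlacing constraints on the output weights $(p,q)$ relative to $(n, m')$ involving $k$ and a parity condition $q \equiv m' \pmod 2$. I would take this Pieri rule from \cite{brion} (or derive it from iterated branching $\Spin_5 \supset \Spin_4 \supset \Spin_3$).

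Next, I would substitute this decomposition into the left-hand side, producing a four-fold sum
\[ \sum_{k,n,m' \ge 0}\sum_{(p,q)} U^k V^{m'+2n} A_1[m'] B_2[p,q] \]
where $(p,q)$ ranges over the weights allowed by the Pieri rule for the given $(k, n, m')$. I would then split off the parity of $m'$: setting $m' = 2m$ gives the first sum on the right-hand side of (\ref{eqn:lprod}), and $m' = 2m+1$ gives the second. The variable $\epsilon \in \{0,1\}$ in (\ref{eqn:lprod}) is introduced to encode, within each parity class, the parity of the shift $q - m'$ (which is constrained by the Pieri rule). The variable $\beta$ parameterizes the second component $q$, and $\alpha$ and $i$ jointly parameterize the first component $p$ together with the ``used'' part of the tensor factor $k$.

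The third step is to check that the interlacing inequalities produced by the Pieri decomposition match the stated constraints (\ref{eqn:cond1})--(\ref{eqn:cond3}) after reparameterization. In more detail, the bound $m \le \alpha \le m + n$ should come from the bound on how far $p$ can drop below $n$ in the tensor product (at most $2m$ in the weight $2m$ component, and at most $n$ in the $2n$ component of $V$); the bound $\underline{\epsilon} \le \beta \le m$ should encode the allowed range of the spin component $q$ with its parity shift; and the bound on $i$ combines the Pieri inequality $q \le$ (something) with the constraint that the total ``used'' portion of $k$ does not exceed $k$ itself.

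The main obstacle will be the bookkeeping of this change of variables: writing down the Pieri rule in a form that isolates $p$ and $q$ cleanly, and verifying that the upper bound $\min(\alpha - \beta, \; k - 2m - n + \alpha + \beta - \epsilon)$ in (\ref{eqn:cond3}) correctly encodes both the interlacing inequality constraining the output weight from above and the constraint $k \ge$ (sum of shifts). Once the dictionary between $(p,q,k,n,m')$ and $(k, m, n, \epsilon, \alpha, \beta, i)$ is fixed, the equality reduces to a verification that the stated constraints are exactly the image of the Pieri constraints under this reparameterization, which is a routine but careful combinatorial check.
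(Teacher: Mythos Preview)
Your overall strategy---apply an orthogonal Pieri rule to decompose $B_2[k,0]\otimes B_2[n,m']$ and then reparameterize---is exactly what the paper does. However, there is a genuine error in your proposal that would block the argument as you have described it.

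You assert that the decomposition of $B_2[k,0]\otimes B_2[n,m']$ is multiplicity free. This is false for $k\ge 2$. The orthogonal Pieri rule the paper uses (quoted from \cite{bfg2}, not \cite{brion}) says that the multiplicity of a constituent $\pi(\sigma)$ in $\pi(\lambda)\otimes B_2[k,0]$ equals the number of partitions $\nu$ contained in both $\lambda$ and $\sigma$ with $\lambda\setminus\nu$ and $\sigma\setminus\nu$ horizontal strips and $|\lambda\setminus\nu|+|\sigma\setminus\nu|\in\{k,k-1\}$ (with a length condition in the non-spinor case). These multiplicities are often larger than one, and the entire right-hand side of (\ref{eqn:lprod}) is organized to \emph{count} them: the tuple $(\alpha,\beta,i,\epsilon)$ does not parameterize output weights but rather the choices of the intermediate partition $\nu=\{\alpha,\beta\}$, the alternative $\epsilon\in\{0,1\}$ selecting $k$ versus $k-1$, and the number $i$ of boxes added to the second row of $\sigma$ beyond $\nu$.

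Consequently several of your interpretations are off. The variable $\epsilon$ has nothing to do with the parity of $q-m'$ (indeed $q\equiv m'\pmod 2$ always, so that quantity is even); it records which of the two box-count options in the Pieri rule is used, and the asymmetric lower bound $\underline{\epsilon}$ in (\ref{eqn:cond2}) is precisely the ``$\nu$ has length $2$'' condition required for the $k-1$ option in the non-spinor case. Likewise $\beta$ alone does not determine the second coordinate of the output: one has $q=2\beta+2i$ (or $2\beta+2i+1$), so $\beta$ and $i$ together determine $q$, while $\alpha$ and $i$ together (with $k,m,n,\epsilon$) determine $p$. The inequality (\ref{eqn:cond1}) is the containment/horizontal-strip condition $\nu\subseteq\lambda=\{m+n,m\}$, and (\ref{eqn:cond3}) combines the horizontal-strip condition $\sigma\setminus\nu$ (giving $i\le\alpha-\beta$) with $\nu\subseteq\sigma$ (giving the other bound). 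Once you adopt this reading of the parameters, the verification is indeed routine; but with your current dictionary it cannot be carried out.
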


To prove the proposition, we apply a Pieri rule found in \cite[\S 3]{bfg2}.  We state the rule only for the case of $B_2$, though the rule applies to any orthogonal group.  Following the notation there, a partition $\lambda = \set{k_1+k_2,k_2}$ is associated to the representation $B_2[k_1,2k_2]$ of $\Spin_5(\mathbf{C})$, which we denote by $\pi(\lambda)$.  We also write $\pi(\Delta,\lambda)$ for the representation $B_2[k_1,2k_2+1]$.
\begin{thm}[{\cite[Propositions 3.3 and 3.4]{bfg2}}] \label{thm:bfg}
Suppose that $\sigma$ and $\pi$ are partitions of length at most 2.  We have the following two facts.
\begin{enumerate}
	\item The multiplicity of $\pi(\sigma)$ in $\pi(\lambda) \otimes B_2[k,0]$ is the number of partitions $\nu$ contained in both $\sigma$ and $\lambda$ with $\lambda\setminus \nu$ and $\sigma \setminus \nu$ horizontal strips such that either $|\lambda \setminus \nu| + |\sigma \setminus \nu| =k$ or $\lambda$ has length 2 and $|\lambda \setminus \nu| + |\sigma \setminus \nu| =k-1$.
	\item The multiplicity of $\pi(\Delta,\sigma)$ in $\pi(\Delta,\lambda) \otimes B_2[k,0]$ is the number of partitions $\nu$ contained in both $\sigma$ and $\lambda$ with $\lambda\setminus \nu$ and $\sigma \setminus \nu$ horizontal strips such that $|\lambda \setminus \nu| + |\sigma \setminus \nu| =k$ or $k-1$.
\end{enumerate}
\end{thm}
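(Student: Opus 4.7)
The plan is to derive this Pieri rule for $\Spin_5(\C)$ by combining Littlewood's orthogonal Pieri formula with the harmonic decomposition of symmetric powers of the standard representation. The key starting observation is that $B_2[k,0]$ is the traceless (harmonic) subspace of $\mathrm{Sym}^k(V)$ for $V$ the five-dimensional standard representation of $\SO_5$, so that
\[
\mathrm{Sym}^k(V) = \bigoplus_{j=0}^{\lfloor k/2 \rfloor} B_2[k-2j,\,0].
\]
This reduces the problem to decomposing $\pi(\lambda) \otimes \mathrm{Sym}^k(V)$ and then inverting a triangular system in $k$ to isolate the contribution of $B_2[k,0]$.

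I would compute $\pi(\lambda) \otimes \mathrm{Sym}^k(V)$ in the Littlewood-Koike-Terada framework: expand the character of $\pi(\lambda)$ as a universal character, multiply by $h_k$ (the complete symmetric function, corresponding to $\mathrm{Sym}^k(V)$) using the $\GL$ Pieri rule, and then apply Littlewood's modification rule to re-express the result in terms of orthogonal characters. After the standard cancellations, the multiplicity of $\pi(\sigma)$ in this tensor product equals the number of intermediate partitions $\nu \subseteq \lambda, \sigma$ with $\lambda \setminus \nu$ and $\sigma \setminus \nu$ horizontal strips of combined size $|\lambda \setminus \nu| + |\sigma \setminus \nu| \leq k$ of the correct parity. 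Inverting the harmonic decomposition then extracts precisely the contribution with $|\lambda \setminus \nu| + |\sigma \setminus \nu| = k$ on the nose, giving the main term of case (1).

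The $k-1$ correction in case (1), which appears only when $\lambda$ has length $2$, and its unconditional appearance in case (2), arise from the distinction between $\mathrm{O}_5$ and $\SO_5$ and from the effect of the associated-partition involution in Littlewood's rule. I would handle this by first decomposing over $\mathrm{O}_5$, where the rule is symmetric, and then restricting to $\SO_5$: when $\lambda$ has length at most $2$, the $\mathrm{O}_5$-irrep splits as $\pi(\lambda) \oplus \pi(\lambda^{\ast})$ where $\lambda^{\ast}$ is the associated partition. The extra strip of size $k-1$ comes precisely from the $\pi(\lambda^{\ast})$ summand, which is nontrivially coupled only when $\lambda$ has length exactly $2$ in the tensor setting (case (1)). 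For the spinor case (2), the same computation carries over after tensoring through by $\Delta$, and the parity shift built into the spinor representation guarantees that both the $k$ and $k-1$ alternatives contribute unconditionally.

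The main obstacle will be the careful bookkeeping of the parity condition and the $\SO_5$ vs.\ $\mathrm{O}_5$ comparison, particularly in the spinor case. I would verify the rule first in low-rank test cases---for example, $\pi(\lambda) \otimes B_2[1, 0]$, where the classical Pieri rule for $B_2$ adds or removes a single box---and then bootstrap to general $k$ by induction on $k$, cross-checking totals against Weyl's dimension formula for $B_2$.
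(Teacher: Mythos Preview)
The paper does not prove this theorem; it is quoted verbatim from \cite[Propositions 3.3 and 3.4]{bfg2} and used as a black box in the proof of Proposition \ref{prop:pieri}. There is therefore no proof in the paper to compare your proposal against.

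Your sketch is a reasonable outline of how one might approach an orthogonal Pieri rule: the harmonic decomposition $\mathrm{Sym}^k(V)=\bigoplus_{j}B_2[k-2j,0]$ together with Littlewood's branching formula is indeed the standard route, and inverting in $k$ is the right idea for isolating $B_2[k,0]$. That said, the explanation of the $k-1$ correction is imprecise. For $\SO_5$ versus $\mathrm{O}_5$, the associated partition $\lambda^\ast$ of a partition $\lambda$ of length at most $2$ has length at least $3$, so the ``splitting'' $\pi(\lambda)\oplus\pi(\lambda^\ast)$ you describe does not occur as stated; rather, the $\mathrm{O}_5$-representation labeled by $\lambda$ restricts irreducibly to $\SO_5$, and the extra contributions come from $\mathrm{O}_5$-constituents indexed by partitions of length $3$ (which become $\SO_5$-representations indexed by length-$2$ partitions upon restriction). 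The condition ``$\lambda$ has length $2$'' in part (1) is what allows $\nu$ to have length $2$, which is exactly when such length-$3$ $\mathrm{O}_5$-labels can arise in the product. In the spinor case (2) this length restriction disappears because tensoring by $\Delta$ shifts all labels uniformly. If you intend to write this up, you should consult the original argument in \cite{bfg2} or a standard reference on orthogonal Pieri rules rather than rely on the associate-partition heuristic as you have phrased it.
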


\begin{proof}[Proof of Proposition \ref{prop:pieri}]
Let $\lambda = \set{m+n,m}$, so that $\pi(\lambda) = [n,2m]$ and $\pi(\Delta,\lambda)=[n,2m+1]$.  The sum on the left-hand side of (\ref{eqn:lprod}) will contribute to the first or second sum on the right based on whether $m' = 2m$ is even or $m' = 2m+1$ is odd, respectively.  We apply Theorem \ref{thm:bfg} to calculate $\pi(\lambda) \otimes B_2[k,0]$ and $\pi(\Delta,\lambda) \otimes B_2[k,0]$.  Note that the Pieri rules differ only slightly, so we will be able to handle both cases simultaneously.

Write $\nu = \set{\alpha,\beta}$.  Then the condition that $\nu$ is contained in $\lambda$ and $\lambda \setminus \nu$ is a horizontal strip implies $m \le \alpha \le m+n$ and $0 \le \beta \le m$.  In considering possibilities for $\sigma$, let $\epsilon \in \set{0,1}$ be such that $|\lambda \setminus \nu| + |\sigma \setminus \nu| =k-\epsilon$.  Note that $|\lambda \setminus \nu| = 2m+n-\alpha-\beta$.  Then letting $i \ge 0$ be the number of additional blocks in the second row of $\sigma$ compared to $\nu$, we must have $\sigma = \set{\alpha+k-2m-n+\alpha+\beta-\epsilon-i,\beta+i}$.  The condition that $\sigma$ contains $\nu$ implies that $k-2m-n+\alpha+\beta-\epsilon-i \ge 0$.  For $\sigma \setminus \nu$ to be a horizonal strip, we need only require that $i\le \alpha - \beta$.  In the spinor case, this completely determines the $\sigma$ that appear, and we obtain the second sum in Proposition \ref{prop:pieri}.  In the non-spinor case, we note that the length 2 condition translates to $\beta \ge 1$ when $\epsilon = 1$, or in other words, $\beta \ge \epsilon$.  The equality in (\ref{eqn:lprod}) follows.
\end{proof}

\subsection{Character coefficients: local integral side} \label{subsec:locintside}

For each of the sums appearing in (\ref{eqn:multdef}), we would like to write an expression for $m(x,y,a,b,c)$ so that (\ref{eqn:multdef}) is equal to the right-hand side of (\ref{eqn:localintegral}).  Expanding out the product in (\ref{eqn:localintegral}), the coefficient of $A_1[2a-c]B_2[b,c]$ is $U^{c-a}V^c$ multiplied by
\begin{equation} \label{eqn:loccoeffca} \sum_{\substack{0 \le d \le 2a-c \\ 0 \le e \le b \\ 0 \le f}} U^{b+d-e+f}(V^2)^{e+f}\end{equation}
if $c \ge a$ or $U^{a-c}V^{2a-c}$ multiplied by
\begin{equation} \label{eqn:loccoeffac} \sum_{\substack{0 \le d \le c \\ 0 \le e \le -a+b+c \\ 0 \le f}} U^{-a+b+c+d-e+f}(V^2)^{e+f}\end{equation}
if $a < c$.  We next write the summations in (\ref{eqn:loccoeffca}) and (\ref{eqn:loccoeffac}) in the form $\sum_{x,y \ge 0} m(x,y,a,b,c)U^xV^{2y}$.  In what follows, we regard $a,b,$ and $c$ as fixed and suppress them in the notation.

{\bf Case $c \ge a$}: By examining the terms in the sum in (\ref{eqn:loccoeffca}) and using the inequalities on $d,e,$ and $f$, it is clear that if the monomial $U^xV^{2y}$ appears at all, we must have
\begin{equation} \label{eqn:caineq}-2a-b+c \le -x+y \le b, y \ge 0,\textrm{ and }x+y \ge b.\end{equation}
Moreover, we must have
\begin{equation} \label{eqn:cacong} x+y \equiv b \pmod{2} \textrm{ if } 2a=c,\end{equation}
since $x+y = b+2f$ in this case.  It is easy to see that (\ref{eqn:caineq}) and (\ref{eqn:cacong}) are exactly the conditions for $m(x,y)\neq 0$ when $c\ge a$.  We define
\begin{equation}\label{eqn:cadefdelta} \delta = \begin{cases} 0 & \textrm{if }x+y \equiv b \pmod{2}\\ 1 &\textrm{otherwise}.\end{cases}\end{equation}

\begin{lemma}
We have
\begin{equation} \label{eqn:camform} m(x,y) = \begin{cases} 0 & \textrm{ if }(\ref{eqn:caineq})\textrm{ or }(\ref{eqn:cacong}) \textrm{ does not hold} \\ \min\(\floor{\frac{2a-c-\delta}{2}},\frac{b+x-y-\delta}{2},\floor{\frac{2a+b-c-x+y}{2}},b\)+1 & \textrm{ if }y \ge b\textrm{ and }(\ref{eqn:caineq})\textrm{ and }(\ref{eqn:cacong})\textrm{ hold}  \\ \min\(\floor{\frac{2a-c-\delta}{2}},\frac{-b+x+y-\delta}{2},\floor{\frac{2a+b-c-x+y}{2}},y\)+1 & \textrm{ if }y < b\textrm{ and }(\ref{eqn:caineq})\textrm{ and }(\ref{eqn:cacong})\textrm{ hold}. \end{cases}\end{equation}

\end{lemma}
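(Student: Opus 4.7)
The plan is to interpret the sum in (\ref{eqn:loccoeffca}) as a lattice-point count and reduce it to a one-dimensional count. A triple $(d,e,f)$ contributes $U^x V^{2y}$ precisely when $b+d-e+f=x$ and $e+f=y$, subject to $0\le d\le 2a-c$, $0\le e\le b$, and $0\le f$. Solving the two linear equations for $d$ and $f$ in terms of $e$ gives $f=y-e$ and $d=x-b+2e-y$, so $m(x,y)$ equals the number of integers $e$ satisfying
\[\max\bigl(0,\,\ceil{(b+y-x)/2}\bigr)\,\le\, e\,\le\,\min\bigl(b,\,y,\,\floor{(2a-c+b+y-x)/2}\bigr).\]
Nonemptiness of this interval is then equivalent to (\ref{eqn:caineq}) together with (\ref{eqn:cacong}): the four inequalities in (\ref{eqn:caineq}) make the five bounds mutually compatible, and (\ref{eqn:cacong}) handles the boundary case $2a=c$, where $d$ is forced to equal $0$ and the range on $2e$ collapses to the single integer $b+y-x$, which must then be even.

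Assuming nonemptiness, the count is simply $e_{\max}-e_{\min}+1$. I would split into the two cases $y\ge b$ and $y<b$, which determine whether $b$ or $y$ is the binding term in $e_{\max}$, and within each into the subcases $b+y-x\ge 0$ and $b+y-x<0$, which determine whether $\ceil{(b+y-x)/2}$ or $0$ is the binding term in $e_{\min}$. The parity variable $\delta$ enters uniformly through $\ceil{(b+y-x)/2}=(b+y-x+\delta)/2$, from which routine arithmetic yields
\[b-\ceil{(b+y-x)/2}=\tfrac{b+x-y-\delta}{2},\qquad y-\ceil{(b+y-x)/2}=\tfrac{-b+x+y-\delta}{2},\]
\[\floor{(2a-c+b+y-x)/2}-\ceil{(b+y-x)/2}=\floor{(2a-c-\delta)/2}.\]
In each of the four subcases the active bounds produce exactly two of the four arguments of the minimum appearing in (\ref{eqn:camform}).

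The main technical step is to verify that, in each subcase, the two inactive arguments of the four-term minimum are automatically at least as large as the active ones, so that including them does not change the minimum. For example, when $y\ge b$ and $b+y-x\ge 0$, one checks $b\ge (b+x-y-\delta)/2$ (equivalent to $b+y-x\ge-\delta$, which holds) and $\floor{(2a+b-c-x+y)/2}\ge\floor{(2a-c-\delta)/2}$ (which holds since $b+y-x\ge 0$ has parity $\delta$). The other three subcases are analogous and require no new ideas. I do not expect a conceptual obstacle: the entire argument is a careful bookkeeping exercise with floors, ceilings, and the parity of $b+y-x$, and once the four pairwise comparisons are verified, the closed form (\ref{eqn:camform}) follows immediately from $m(x,y)=e_{\max}-e_{\min}+1$.
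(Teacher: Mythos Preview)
Your proposal is correct and follows essentially the same approach as the paper: both recognize that the triples $(d,e,f)$ lie on a single affine line and reduce $m(x,y)$ to the count of integer points in an interval, splitting into the same cases according to the sign of $b+y-x$ and the comparison of $y$ with $b$. The only cosmetic difference is that the paper parametrizes the line by an increment $j$ from a base solution with minimal $d$, whereas you parametrize directly by $e$; the resulting bounds and the verification that the ``inactive'' arguments of the minimum are redundant are identical.
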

\begin{proof}
We need only check this when (\ref{eqn:caineq}) and (\ref{eqn:cacong}) hold.  The number $m(x,y)$ is the number of solutions $(d,e,f)$ to $U^xV^{2y}=U^{b+d-e+f}(V^2)^{e+f}$ meeting the constraints in (\ref{eqn:loccoeffca}).  Starting with the solution for which $d$ is minimal, the linear conditions $x-b=d-e+f$ and $y=e+f$ force the remaining solutions to be of the form $(d,e,f) + j\cdot (2,1,-1)$ for $j \ge 0$.

To determine the number of solutions, we break into cases.  If $-x+y \ge -b$, then we obtain a solution $(d,e,f) = (\delta,\frac{-x+y+b+\delta}{2},\frac{x+y-b-\delta}{2})$, where $\delta = 0$ or 1 to match the parity of $x+y-b$, which is automatically minimal in $d$.  Note that the inequalities in (\ref{eqn:caineq}) together with $-x+y \ge -b$ guarantee that $e$ and $f$ are in the allowed range, while (\ref{eqn:cacong}) ensures that $d=\delta$ is as well.  The largest value of $j$ for which $(d,e,f) + j\cdot (2,1,-1)$ is simply the largest one for which all three of the variables are within the allowed bounds.  For $d$ this is $\floor{\frac{2a-c-\delta}{2}}$, for $e$ this is $b-\frac{-x+y+b+\delta}{2} = \frac{x-y+b-\delta}{2}$, and for $f$ this is $\frac{x+y-b-\delta}{2}$.  So
\begin{equation} \label{eqn:casexyb} m(x,y) = \min\(\floor{\frac{2a-c-\delta}{2}},\frac{x-y+b-\delta}{2},\frac{x+y-b-\delta}{2}\)+1\end{equation}
in this case.  We note that in the expressions on the right-hand side of (\ref{eqn:camform}), only the first and second terms in the minima are relevant when $-x+y \ge -b$.  Moreover, if $y \ge b$, $\frac{x+y-b-\delta}{2}$ is irrelevant in (\ref{eqn:casexyb}) while if $y < b$, $\frac{x-y+b-\delta}{2}$ is irrelevant.  This proves (\ref{eqn:camform}) in this case.

We now assume that $-x+y < -b$.  In this case, only the third and fourth terms in the minima on the right-hand side of (\ref{eqn:camform}) are relevant.  The solution $(d,e,f) = (x-y-b,0,y)$ is in range and clearly minimal in $d$.  Analyzing the upper bounds of each component as before give
\begin{equation} \label{eqn:casebyx} m(x,y) = \min\(\floor{\frac{2a+b-c-x+y}{2}},b,y\)+1.\end{equation}
As before, an inequality between $y$ and $b$ allows us to exclude one of the three values, giving the equality (\ref{eqn:camform}) again in this case.
\end{proof}

{\bf Case $a > c$}:  The analysis of this case is identical to the previous one, so we simply state the final result.  The exact conditions for $m(x,y) \ne 0$ are that 
\begin{equation} \label{eqn:acineq} a-b-2c \le -x+y \le -a+b+c, y \ge 0,x+y \ge -a+b+c,\textrm{ and, if }c=0, x+y \equiv b+a \pmod{2}.\end{equation}
Define
\begin{equation}\label{eqn:acdefdelta} \delta = \begin{cases} 0 & \textrm{if }x+y \equiv -a+b+c \pmod{2}\\ 1 &\textrm{otherwise}.\end{cases}\end{equation}
We have
\begin{align} \label{eqn:acmform} &m(x,y) = \\\nonumber &\begin{cases} 0 & \textrm{ if }(\ref{eqn:acineq}) \textrm{ does not hold} \\ \min\hspace{-1.5pt}\(\floor{\frac{c-\delta}{2}},\frac{-a+b+c+x-y-\delta}{2},\floor{\frac{-a+b+2c-x+y}{2}},-a+b+c\)+1 & \textrm{ if }y \ge -a+b+c\textrm{ and }(\ref{eqn:acineq})\textrm{ holds}  \\ \min\hspace{-1.5pt}\(\floor{\frac{c-\delta}{2}},\frac{a-b-c+x+y-\delta}{2},\floor{\frac{-a+b+2c-x+y}{2}},y\)+1 & \textrm{ if }y < -a+b+c\textrm{ and }(\ref{eqn:acineq})\textrm{ holds}. \end{cases}
\end{align}

\subsection{Character coefficients: $L$-function side} \label{subsec:lfuncside}

We now work with the right-hand side of (\ref{eqn:lprod}) in order to write it in the form of (\ref{eqn:multdef}).  We write $n(x,y,a,b,c)$ in place of $m(x,y,a,b,c)$ to distinguish it from the function in Section \ref{subsec:locintside}, though we again abbreviate the notation to $n(x,y)$, keeping $a,b,$ and $c$ fixed.

We will simultaneously handle the case of $c$ odd or even by using the following notational conventions.  We use $\underline{\cdot}$ for a term present only if $c$ is even and $\overline{\cdot}$ for a term present only if $c$ is odd.  The former is already utilized in (\ref{eqn:cond2}) above.  We also define $\gamma \in \mathbf{Z}$ by the formula $c = 2\gamma + \overline{1}$.

For fixed $a,b,c$, the coefficient $n(x,y)$ is the number of 7-tuples $(k,m,n,\epsilon,\alpha,\beta,i)$ such that
\begin{itemize}
	\item the inequalities in (\ref{eqn:cond1}), (\ref{eqn:cond2}), and (\ref{eqn:cond3}) are satisfied and
	\item we have $U^kV^{2m+2n+\overline{1}}A_1[2m+\overline{1}]B_2[2\alpha+k-n-2m-\epsilon-2i,2\beta+2i+\overline{1}]=U^{c-a+x}V^{c+2y}A_1[2a-c]B_2[b,c]$ or $U^{a-c+x}V^{2a-c+2y}A_1[2a-c]B_2[b,c]$ depending on whether $c \ge a$ or $a>c$, respectively.
\end{itemize}
Our goal is to calculate $n(x,y)$ more explicitly so that we may compare the result to (\ref{eqn:camform}) or (\ref{eqn:acmform}).  Although we will break into cases based on whether $c \ge a$ or $a > c$, we will do our calculations in such a way that the answer in the latter case can be deduced quickly from the former.

{\bf Case $c \ge a$}:  The equality $U^{c-a+x}V^{c+2y}A_1[2a-c]B_2[b,c]=U^kV^{2m+2n+\overline{1}}A_1[2m+\overline{1}]B_2[2\alpha+k-n-2m-\epsilon-2i,2\beta+2i+\overline{1}]$ allows us to pare down the number of free parameters as follows.  We have
\begin{equation} \label{eqn:detk} k =c-a+x, 2m+\overline{1} = 2a-c, \textrm{ and } 2m+2n+\overline{1} = c+2y,\end{equation}
which, solving for $m$ and $n$ in the second and third equations, (using $c = 2\gamma+\overline{1}$) gives
\begin{equation} \label{eqn:detmn} m = a-\gamma-\overline{1} \textrm{ and } n = \gamma+y-m = -a+c+y.\end{equation}
The relation $2\beta+2i+\overline{1} = c = 2\gamma+\overline{1}$ gives $\beta = \gamma-i$.  So we are reduced to looking for triples $(\epsilon,\alpha,i)$.  We can rewrite the relation
\[2\alpha+k-n-2m-\epsilon-2i = b\]
as
\[2\alpha+k-n-2m-b = 2i+\epsilon.\]
Using the formulas for $k,n$, and $m$ in (\ref{eqn:detk}) and (\ref{eqn:detmn}), this uniquely determines $i$ and $\epsilon$ in terms of $\alpha$ and the givens $a,b,c,x$, and $y$.  In particular, we are reduced to counting the values of a single parameter $\alpha$ that satisfy the three inequalities (\ref{eqn:cond1}), (\ref{eqn:cond2}), and (\ref{eqn:cond3}) (all of which can be written in terms of $a,b,c,x$, and $y$).  We note that $\epsilon$ is equal to 0 or 1 based on the parity of
\[2\alpha+k-n-2m-b \equiv x+y+b \pmod{2},\]
and is thus equal to the $\delta$ of (\ref{eqn:cadefdelta}).  We also note that
\[i = \alpha+\frac{k-n-2m-b-\epsilon}{2} = \alpha+\frac{x-y-2a-b+c+\overline{1}-\epsilon}{2}.\]
Using this, we may now list out the conditions on $\alpha$.  From (\ref{eqn:cond1}) we have
\begin{equation} \label{eqn:alpha1} a-\gamma-\overline{1} \le \alpha \textrm{ and } \alpha \le \gamma + y.\end{equation}
Substituting for $\beta$ in (\ref{eqn:cond2}), we obtain
\[\underline{\epsilon} \le \gamma-\alpha-\frac{x-y-2a-b+c+\overline{1}-\epsilon}{2} \le m.\]
The lower bound rearranges to give the upper bound
\begin{equation} \label{eqn:alpha2} \alpha \le -\underline{\epsilon}+\frac{-x+y+2a+b-\overline{2}+\epsilon}{2} \end{equation}
on $\alpha$.  The upper bound rearranges to give the lower bound
\begin{equation} \label{eqn:alpha3} \frac{-x+y+b+c-\overline{1}+\epsilon}{2} \le \alpha\end{equation}
on $\alpha$.  For (\ref{eqn:cond3}), we first note that the first upper bound gives $i \le \alpha-\gamma+i$ after substituting for $\beta$, which rearranges to
\begin{equation} \label{eqn:alpha4} \gamma \le \alpha. \end{equation}
Substituting for $k,m,n,i$ and $\beta$ in the remaining terms in (\ref{eqn:cond3}) gives
\begin{align*} 0 &\le \alpha+\frac{x-y-2a-b+c+\overline{1}-\epsilon}{2}\\
  &\le (c-a+x)-2(a-\gamma-\overline{1})-(-a+c+y)+\alpha+\gamma - \(\alpha+\frac{x-y-2a-b+c+\overline{1}-\epsilon}{2}\) -\epsilon\\
	&=x-y-2a+c+\overline{1}+\gamma - \(\frac{x-y-2a-b+c+\overline{1}-\epsilon}{2}\) -\epsilon
\end{align*}
Rearranging this gives the lower and upper bounds
\begin{equation} \label{eqn:alpha5} \frac{-x+y+2a+b-c-\overline{1}+\epsilon}{2} \le \alpha \textrm{ and } \alpha \le b+\gamma. \end{equation}
We may now conclude that $n(x,y)$ is the number of integers in the interval of solutions for $\alpha$ permitted by (\ref{eqn:alpha1}), (\ref{eqn:alpha2}), (\ref{eqn:alpha3}), (\ref{eqn:alpha4}), and (\ref{eqn:alpha5}).  We can slightly simplify this situation using the hypothesis $c \ge a$, as the first inequality in (\ref{eqn:alpha5}) is implied by (\ref{eqn:alpha3}) and $c \ge a$, so we only need the latter one.  Moreover, the first inequality in (\ref{eqn:alpha1}) is implied by (\ref{eqn:alpha4}) and $c \ge a$ as well.

{\bf Case $a > c$}:  The inequality $c \ge a$ was only used in the last two sentences of the preceding calculation in order to render the first inequalities in (\ref{eqn:alpha1}) and (\ref{eqn:alpha5}) redundant.  Thus the remainder of the calculations apply equally well in the case $a > c$ once a substitution is made to make the definitions of $x$ and $y$ compatible.  We now have $U^kV^{2m+2n+\overline{1}}A_1[2m+\overline{1}]B_2[2\alpha+k-n-2m-\epsilon-2i,2\beta+2i+\overline{1}]=U^{a-c+x}V^{2a-c+2y}A_1[2a-c]B_2[b,c]$, so $k = a-c+x$ and $2m+2n+\overline{1} = 2a-c+2y$.  Comparing these to (\ref{eqn:detk}), we see that if we replace $x$ with $x+2a-2c$ and $y$ with $y+a-c$ in (\ref{eqn:alpha1}), (\ref{eqn:alpha2}), (\ref{eqn:alpha3}), (\ref{eqn:alpha4}), and (\ref{eqn:alpha5}), we obtain the conditions for the $a>c$ case.  We obtain the following conditions.
\begin{gather}
  \label{eqn:acalpha1} a-\gamma-\overline{1} \le \alpha,\alpha \le \gamma + y+a-c, \alpha \le b+\gamma \textrm{ and } \gamma \le \alpha\\
	\label{eqn:acalpha2} \alpha \le -\underline{\epsilon}+\frac{-x+y+a+b+c-\overline{2}+\epsilon}{2}\\
	\label{eqn:acalpha3} \frac{-x+y-a+b+2c-\overline{1}+\epsilon}{2} \le \alpha \textrm{ and } \frac{-x+y+a+b-\overline{1}+\epsilon}{2} \le \alpha
\end{gather}
Since $a > c$, the second inequality in (\ref{eqn:acalpha3}) implies the first inequality in (\ref{eqn:acalpha3}).  Moreover, $\gamma \le \alpha$ is implied by $a-\gamma-\overline{1} \le \alpha$.

\subsection{Comparison of coefficient functions} \label{subsec:compare}

We now have an explicit description of $m(x,y,a,b,c)$ and sets of inequalities that determine $n(x,y,a,b,c)$.  In this section, we prove that $m(x,y,a,b,c)=n(x,y,a,b,c)$ in all cases.  We will continue to drop $a,b,$ and $c$ from the notation.

\begin{prop} \label{prop:ineq}
We have $m(x,y) = n(x,y)$.
\end{prop}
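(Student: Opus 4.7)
The plan is to realize $n(x,y)$ as the count of integers $\alpha$ in the intersection of intervals cut out by the two lower bounds $L_1, L_2$ and three upper bounds $U_1, U_2, U_3$ derived in Section \ref{subsec:lfuncside}. Then
\[n(x,y) = \max\bigl(0,\ \min_i U_i - \max_j L_j + 1\bigr) = \max\bigl(0,\ \min_{i,j}(U_i - L_j)+1\bigr),\]
so the task reduces to computing the six differences $U_i - L_j$ and identifying them with the four terms in the $\min$ formula for $m(x,y)$. Before that, I would verify that the parameter $\epsilon\in\{0,1\}$ of Section \ref{subsec:lfuncside} coincides with the parity marker $\delta$ of (\ref{eqn:cadefdelta}) and (\ref{eqn:acdefdelta}): the relation $2i+\epsilon = 2\alpha+k-n-2m-b$ forces $\epsilon \equiv x+y+b \pmod 2$ in the $c\geq a$ case, matching $\delta$, and the $a>c$ case is analogous. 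This identification also accounts for the extra parity clauses at the boundary $2a = c$ (forcing $c$ even and $m = 0$), where the condition $\underline{\epsilon} \leq \beta \leq m = 0$ gives $\epsilon = 0$, agreeing with (\ref{eqn:cacong}); the $c = 0$ case in (\ref{eqn:acineq}) works identically.

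Next, in the $c\geq a$ case, with $L_1, L_2$ coming from (\ref{eqn:alpha4}), (\ref{eqn:alpha3}) and $U_1, U_2, U_3$ from (\ref{eqn:alpha1}), (\ref{eqn:alpha5}), (\ref{eqn:alpha2}), I would compute the six differences. Using $c = 2\gamma + \overline{1}$ and $\epsilon = \delta$, separating by the parity of $c$ only to check the floor identities, one obtains
\begin{align*}
U_1 - L_1 &= y, & U_2 - L_1 &= b, & U_3 - L_1 &= \left\lfloor \tfrac{2a+b-c-x+y}{2}\right\rfloor,\\
U_1 - L_2 &= \tfrac{x+y-b-\delta}{2}, & U_2 - L_2 &= \tfrac{b+x-y-\delta}{2}, & U_3 - L_2 &= \left\lfloor \tfrac{2a-c-\delta}{2}\right\rfloor.
\end{align*}
These are precisely the six candidates inside the $\min$ in the two branches of (\ref{eqn:camform}). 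When $y \geq b$, $U_1 - L_j \geq U_2 - L_j$ for $j \in \{1,2\}$, so the two $U_1$ terms are dominated and the $\min$ reduces to the four remaining terms, matching the first branch; when $y < b$, $U_1$ and $U_2$ swap roles, yielding the second branch. The non-negativity of the six differences reproduces (\ref{eqn:caineq}), while the case $2a = c$, $\delta = 1$, where $\lfloor (2a-c-\delta)/2\rfloor = -1$, recovers the parity constraint of (\ref{eqn:cacong}).

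The $a > c$ case is handled by a parallel computation with $L_1', L_2', U_1', U_2', U_3'$ extracted from (\ref{eqn:acalpha1})--(\ref{eqn:acalpha3}), yielding
\begin{align*}
U_1' - L_1' &= y, & U_2' - L_1' &= -a+b+c, & U_3' - L_1' &= \left\lfloor \tfrac{-a+b+2c-x+y}{2}\right\rfloor,\\
U_1' - L_2' &= \tfrac{a-b-c+x+y-\delta}{2}, & U_2' - L_2' &= \tfrac{-a+b+c+x-y-\delta}{2}, & U_3' - L_2' &= \left\lfloor \tfrac{c-\delta}{2}\right\rfloor,
\end{align*}
which match (\ref{eqn:acmform}) via the case split $y \geq -a+b+c$ vs.\ $y < -a+b+c$ (equivalently $U_1' \geq U_2'$ vs.\ $U_1' < U_2'$). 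The boundary $c = 0$, $\delta = 1$ gives $\lfloor (c-\delta)/2\rfloor = -1$, forcing $n = m = 0$ in agreement with the last clause of (\ref{eqn:acineq}).

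The main obstacle will be the parity and floor-function bookkeeping: each of the three floor-valued differences requires a separate case analysis on the parity of $c$ to align with the $\overline{\cdot}$ and $\underline{\cdot}$ conventions of Section \ref{subsec:lfuncside}, and care is needed so that the substitution $\epsilon = \delta$ interacts correctly with the floors. Once these algebraic identities are verified, the proof collapses to the structural identity $\min_{i,j}(U_i - L_j) = \min_i U_i - \max_j L_j$, together with the easy determination of which upper-bound combination is binding in each sub-case.
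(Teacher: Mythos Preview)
Your approach is correct and is genuinely different from the paper's. The paper proceeds by an exhaustive case analysis: for each of the five ways in which the non-vanishing conditions (\ref{eqn:caineq})--(\ref{eqn:cacong}) can fail it checks that $n$ vanishes, and for each of the possible arguments of the $\min$ in (\ref{eqn:camform}) it assumes that argument is the minimum and then verifies, inequality by inequality, that the interval for $\alpha$ has exactly that many points. This yields roughly a dozen sub-cases, each requiring a separate chain of implications and repeated appeals to the parity fact (\ref{eqn:epsfact}). Your argument bypasses this by recognising at once that $n(x,y)=\max(0,\min_i U_i-\max_j L_j+1)=\max(0,\min_{i,j}(U_i-L_j)+1)$, computing the six differences in closed form, and observing that the case split $y\gtrless b$ (resp.\ $y\gtrless -a+b+c$) eliminates exactly the two dominated differences and leaves precisely the four entries of (\ref{eqn:camform}) (resp.\ (\ref{eqn:acmform})). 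This is more conceptual and avoids the paper's repeated verification of the same inequalities in different guises.

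One point you should make explicit: the formula $\#\{\alpha\in\Z:\max_j L_j\le\alpha\le\min_i U_i\}=\max(0,\min_i U_i-\max_j L_j+1)$ requires that all the $L_j$ and $U_i$ be integers. This is true --- it follows from $c=2\gamma+\overline{1}$ together with your identification $\epsilon=\delta$, which forces $-x+y+b+\epsilon$ to be even --- but it deserves a sentence, since otherwise a floor/ceiling correction would be needed. With that remark in place your argument is complete and considerably shorter than the original.
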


\begin{proof}

We break into cases based on whether $c \ge a$ or $a > c$.

{\bf Case $c \ge a$}:  Recall that the conditions for non-vanishing of $m$ are given by (\ref{eqn:caineq}) and (\ref{eqn:cacong}).  So if $m$ vanishes, we must have
\begin{enumerate}
	\item $-x+y > b$,
	\item $-x+y < -2a-b+c$,
	\item $y < 0$,
	\item $x+y < b$, or
	\item $2a=c$ and $x+y \not\equiv b \pmod{2}$.
\end{enumerate}
In each of these cases, we verify that $n$ vanishes, while simultaneously proving that $m=n$ under a related condition.  After checking one additional case, the combination of the resulting equalities will prove the proposition in all cases.
\begin{enumerate}
	\item The inequality (\ref{eqn:alpha3}) combined with the second inequality in (\ref{eqn:alpha5}) gives $\gamma + \frac{-x+y+b+\epsilon}{2} \le \alpha \le b+\gamma$.  It follows that if $-x+y > b$, $n(x,y)=0$, and $n(x,y) \le \frac{x-y+b-\epsilon}{2}+1$ otherwise.

We now claim that if $m(x,y) = \frac{x-y+b-\epsilon}{2}+1 \ne 0$, then $m(x,y) = n(x,y)$.  Note that examination of (\ref{eqn:camform}) implies that $y \ge b$ in this case.  Since we have already checked that $n \le m$ under this condition, it suffices to check that every upper bound for $\alpha$ is at least $b+\gamma$ and every lower bound is at most $\gamma + \frac{-x+y+b+\epsilon}{2}$.

We first handle the lower bound $\gamma$ from (\ref{eqn:alpha4}).  To see that $\gamma \le \gamma + \frac{-x+y+b+\epsilon}{2}$ or
\begin{equation} \label{eqn:lowerineq} 0 \le \frac{-x+y+b+\epsilon}{2}, \end{equation}
observe that $m(x,y) = \frac{x-y+b-\epsilon}{2}+1$ implies $\frac{x-y+b-\epsilon}{2}\le b$ or $0 \le \frac{-x+y+b+\epsilon}{2}$.  Since $y \ge b$ the claim for the upper bound $\gamma + y$ is trivial.  Finally, we need $b + \gamma \le -\underline{\epsilon}+\frac{-x+y+2a+b-\overline{2}+\epsilon}{2}$.  We can rearrange this inequality to
\begin{equation} \label{eqn:case1ineq} 0 \le \frac{-x+y+2a-b-c+\epsilon-\overline{1}-2\underline{\epsilon}}{2}. \end{equation}
We will deduce this from the fact that since $m(x,y) = \frac{x-y+b-\epsilon}{2}+1$, we have $\frac{x-y+b-\epsilon}{2} \le \floor{\frac{2a-c-\epsilon}{2}}$.  Since the left-hand side of this inequality is an integer, we have
\[0 \le \floor{\frac{-x+y+2a-b-c}{2}},\]
and moreover the parity of this numerator is that of $c-\epsilon$.  Comparing with (\ref{eqn:case1ineq}), it suffices to verify that if $c-\epsilon$ is odd, then $\epsilon-\overline{1}-2\underline{\epsilon} \ge -1$, and that if $c-\epsilon$ is even, then $\epsilon-\overline{1}-2\underline{\epsilon} \ge 0$.  It is now easy to check that in each of four cases for the parity of $\epsilon$ and of $c$, the relevant inequality is satisfied.  In fact, we have equality:
\begin{equation} \label{eqn:epsfact} \epsilon-\overline{1}-2\underline{\epsilon} = -1\textrm{ or }\epsilon-\overline{1}-2\underline{\epsilon} = 0\end{equation}
depending on the parity of $c-\epsilon$.  This will be useful later.
	\item The inequalities (\ref{eqn:alpha2}) and (\ref{eqn:alpha4}) give $\gamma \le \alpha \le -\underline{\epsilon}+\frac{-x+y+2a+b-\overline{2}+\epsilon}{2}$. The difference between the outer expressions is $-\underline{\epsilon}+\frac{-x+y+2a+b-c-\overline{1}+\epsilon}{2}$.  Considering the cases for $c \pmod{2}$ and $\epsilon$, it follows that if this difference is nonnegative, $n(x,y) \le \floor{\frac{2a+b-c-x+y}{2}}+1$.  Now suppose that $-x+y < -2a-b+c$, so that $-\underline{\epsilon}+\frac{-x+y+2a+b-c-\overline{1}+\epsilon}{2} \le -\underline{\epsilon}+\frac{-1-\overline{1}+\epsilon}{2}$.  If $\epsilon = 0$, the right-hand side is negative, so $n(x,y)=0$.  If $\epsilon = 1$, either $-\underline{\epsilon}$ or $-\frac{\overline{1}}{2}$ is strictly negative, so we again find $n(x,y)=0$.

We now claim that if $m(x,y) = \floor{\frac{2a+b-c-x+y}{2}}+1 \ne 0$, then $m(x,y)=n(x,y)$.  Similarly to the previous case, it suffices to check that every lower bound on $\alpha$ is at most $\gamma$ and every upper bound is at least $-\underline{\epsilon}+\frac{-x+y+2a+b-\overline{2}+\epsilon}{2}$.  For the lower bound, we need $\frac{-x+y+b+c-\overline{1}+\epsilon}{2} \le \gamma$ or, rearranging,
\begin{equation} \label{eqn:lowerineqflip} 0 \le \frac{x-y-b-\epsilon}{2}.\end{equation}
The condition on $m$ gives us $\floor{\frac{2a+b-c-x+y}{2}} \le \floor{\frac{2a-c-\epsilon}{2}}$, which implies the result since the two sides have the same parity.

For the upper bound $b+\gamma$, the needed inequality rearranges to (\ref{eqn:case1ineq}) with the inequality flipped, or
\begin{equation} \label{eqn:case1ineqflip} 0 \ge \frac{-x+y+2a-b-c+\epsilon-\overline{1}-2\underline{\epsilon}}{2}. \end{equation}
The condition on $m$ gives $\floor{\frac{2a+b-c-x+y}{2}} \le b$ or $\floor{\frac{2a-b-c-x+y}{2}} \le 0$.  Since the parity of $2a-b-c-x+y$ is that of $c - \epsilon$, combining this with the fact (\ref{eqn:epsfact}) above proves (\ref{eqn:case1ineqflip}).  The argument for the other upper bound $y+\gamma$, for which we need
\begin{equation} \label{eqn:case1ineqflipyb} 0 \ge \frac{-x-y+2a+b-c+\epsilon-\overline{1}-2\underline{\epsilon}}{2}, \end{equation}
is identical; simply replace $\floor{\frac{2a+b-c-x+y}{2}} \le b$ with $\floor{\frac{2a+b-c-x+y}{2}} \le y$.
	\item The combination of the second inequality in (\ref{eqn:alpha1}) with (\ref{eqn:alpha4}) is $\gamma \le \alpha \le y+\gamma$.  It follows immediately that $n(x,y)=0$ if $y <0$ and that $n(x,y) \le y+1$ otherwise.

We now claim that if $m(x,y) = y+1 \ne 0$, then $m(x,y) = n(x,y)$.  As before, we need the other lower bound to be no more than $\gamma$ and the other upper bounds to be at least $y+\gamma$.  We first note that if $m(x,y)=y+1$, then $y \le b$.  The inequality $y + \gamma \le b + \gamma$ follows.  The needed comparison between $\gamma$ and the lower bound in (\ref{eqn:alpha2}) is equivalent to (\ref{eqn:lowerineqflip}).  Since $m(x,y) = y+1$, $y \le \frac{-b+x+y-\epsilon}{2}$, which rearranges to (\ref{eqn:lowerineqflip}).  The comparison with (\ref{eqn:alpha4}) gives (\ref{eqn:case1ineqflipyb}) flipped, or
\begin{equation} \label{eqn:case1ineqyb} 0 \le \frac{-x-y+2a+b-c+\epsilon-\overline{1}-2\underline{\epsilon}}{2}. \end{equation}
This follows using the case analysis from (\ref{eqn:epsfact}) from $y \le \floor{\frac{2a+b-c-x+y}{2}}$.
	\item The combination of the second inequality in (\ref{eqn:alpha1}) with (\ref{eqn:alpha3}) gives $\frac{-x + y+b+c-\overline{1}+\epsilon}{2} \le \alpha \le \gamma+y$.  The difference of the outer terms is $\frac{x+y-b-\epsilon}{2}$, proving that $n(x,y)=0$ if $x+y < b$ and that $n(x,y) \le \frac{-b+x+y-\delta}{2}$ otherwise.

Conversely, if $m(x,y) = \frac{-b+x+y-\epsilon}{2} + 1 \ne 0$, then we claim $m=n$.  In the case we need to check $\gamma + y \le \gamma + b$, (\ref{eqn:lowerineq}), and (\ref{eqn:case1ineqyb}).  The first of these follows from the assumption and examination of (\ref{eqn:camform}), the second follows from $\frac{-b+x+y-\epsilon}{2} \le y$, and the third follows from $\frac{-b+x+y-\epsilon}{2} \le \floor{\frac{2a-c-\epsilon}{2}}$ using the case analysis from (\ref{eqn:epsfact}) as above.
	\item The combination of (\ref{eqn:alpha2}) and (\ref{eqn:alpha3}) gives $\frac{-x+y+b+c-\overline{1}+\epsilon}{2} \le \alpha \le -\underline{\epsilon}+\frac{-x+y+2a+b-\overline{2}+\epsilon}{2}$.  The difference of the outer terms is $\frac{2a-c-\overline{1}-2\underline{\epsilon}}{2}$.  If $2a=c$, then it follows that $n(x,y)=0$ unless $x+y\equiv 2 \pmod{2}$.  Moreover, checking the cases for the parity of $c$ and $\epsilon$, we obtain the inequality $n(x,y) \le \floor{\frac{2a-c-\delta}{2}} + 1$ when the right-hand side is nonnegative.

Conversely, if $m(x,y) = \floor{\frac{2a-c-\epsilon}{2}} + 1 \ne 0$, we claim that $m=n$.  In this case, we need to check (\ref{eqn:lowerineq}), (\ref{eqn:case1ineqflip}), and (\ref{eqn:case1ineqflipyb}).  These follow, respectively, from $\floor{\frac{2a-c-\epsilon}{2}} \le \floor{\frac{2a+b-c-x+y}{2}}$, $\floor{\frac{2a-c-\epsilon}{2}} \le \frac{b+x-y-\epsilon}{2}$, and $\floor{\frac{2a-c-\epsilon}{2}} \le \frac{-b+x+y-\epsilon}{2}$ using arguments as above.
\end{enumerate}
We finally observe that the inequality $n(x,y) \le b+1$ follows immediately from (\ref{eqn:alpha4}) and the second inequality in (\ref{eqn:alpha5}).  If $m(x,y) = b+1$, we claim that $m=n$.  For this, we need to check $\gamma + y \ge \gamma + b$, (\ref{eqn:lowerineqflip}), (\ref{eqn:case1ineq}).  The first follows from $m(x,y) = b+1$ and (\ref{eqn:camform}).  The second and third follow, respectively, from $b \le \frac{b+x-y-\epsilon}{2}$ and $b \le \floor{\frac{2a+b-c-x+y}{2}}$ by arguments as above.

Since we have checked equality for each of the possibilities for $m(x,y)$ in (\ref{eqn:camform}), we deduce that $n(x,y) = m(x,y)$.

{\bf Case $a > c$}: The inequalities on $\alpha$ in this case were obtained from the ones in the $c \ge a$ case by substitution.  Recall that two inequalities were redundant in the $c \ge a$ case, while two others are redundant in the $a > c$ case.  If one replaces everywhere in the argument for the $c\ge a$ case the inequality $\gamma \le \alpha$ with the inequality $a-\gamma-\overline{1} \le \alpha$, replaces the inequality $\frac{-x+y+b+c-\overline{1}+\epsilon}{2} \le \alpha$ by $\frac{-x+y+2a+b-c-\overline{1}+\epsilon}{2}$, and applies the substitution in $x$ and $y$, the result is the proof that $m(x,y) = n(x,y)$ in the $a > c$ case.
\end{proof}

\section{First term identities and poles of multivariate Eisenstein series} \label{sec:poles}
We obtain a first term identity between two non-Siegel Eisenstein series on $\GSp_6$ and apply it to study poles of $L$-functions.  In Section \ref{subsec:archimedean}, we verify a result concerning Archimedean principal series by adapting the proof of a similar result of Jiang \cite{jiang} on degenerate principal series representations of $\mathrm{Sp}_8(\R)$.  By Jiang's theorem on first term identities \cite{jiang2}, which relies only on this Archimedean result, this yields Proposition \ref{FTQ} in Section \ref{subsec:res} below.  We use it to prove Proposition \ref{propB:intro} of the introduction in \ref{subsec:poles}.  The proof requires a calculation to control the integrals at ramified and Archimedean places, which is carried out in Proposition \ref{prop:ramcalc}.  Finally, we explain in Section \ref{subsec:nopole} how to deduce a stronger result than Proposition \ref{propB:intro} by using the integral of Bump, Friedberg, and Ginzburg on $\GSp_4$ \cite{bfg2}.

\subsection{Degenerate Archimedean principal series} \label{subsec:archimedean}

We will need to prove a result on degenerate principal series of a particular parabolic subgroup of $\Sp_6$ in order to apply Jiang's first term identity.  We use the basis and symplectic form from Section \ref{subsec:global}.  Let $R$ be the maximal parabolic stabilizing $\gen{f_1,f_2}$; its Levi $M_R$ is isomorphic to $\GL_2 \times \Sp_2$, and its modulus character $\delta_R(m)$ on $m=(m_1,m_2) \in R_M$ is given by $|\det m_1|^5$.  We have the decomposition $R = M_R U_R$, where $U_R$ is the unipotent radical of $P$.  We also write $W_R$ for the Weyl group of $M_R$, which embeds in $W_G$.  For $p \in R(\mathbf{R})$, write
\begin{equation} \label{eqn:levidec} p = mn = (m_1,m_2)n.\end{equation}
Define the normalized principal series
\[I_R(s) = \set{f:\Sp_6(\mathbf{R}) \rightarrow \mathbf{C}| f(pg) = |\det m_1|^{s + \frac{5}{2}} f(g) \textrm{ for all }p \in R(\mathbf{R}), g \in G(\mathbf{R})}.\]

Then the following is the $n=3$, $r=2$ case of \cite[Assumption/Proposition]{jiang2}, which Jiang assumes as a hypothesis in order to prove a first-term identity.
\begin{prop} \label{prop:assume}
For $s > \frac{1}{2}$, $I_R(s)$ has a unique irreducible quotient and is generated by the spherical section. \end{prop}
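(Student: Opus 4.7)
The plan is to follow Jiang's method in \cite[\S 4]{jiang}, which proves the analogous statement for a degenerate principal series of $\Sp_8(\mathbf{R})$. The central tool will be the standard long intertwining operator $M(w_0, s): I_R(s) \to I_{\ol{R}}(-s)$, attached to the longest element $w_0$ of $W_G / W_{M_R}$ sending $R$ to the opposite parabolic $\ol{R}$. By choosing a reduced expression $w_0 = s_{\beta_k}\cdots s_{\beta_1}$ in simple reflections not lying in $W_{M_R}$, one factors $M(w_0, s)$ as a composition of rank-one intertwining operators, each of which is (up to a twist) a standard intertwining operator for a principal series of an $\SL_2(\mathbf{R})$ or $\GL_2(\mathbf{R})$ subgroup of $\Sp_6(\mathbf{R})$ corresponding to a root outside the Levi.

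First I would compute the Gindikin--Karpelevi\v{c} scalar by which each rank-one factor acts on the spherical section and verify it is finite and non-zero for $s > 1/2$; this shows $M(w_0, s)$ is non-zero on the spherical vector. Next I would use the rank-one factorization to establish cyclicity: at each intermediate stage of the composition, the rank-one operator sends the spherical vector to the spherical vector of the next induced representation, and cyclicity propagates because the relevant rank-one induced representations of $\SL_2(\mathbf{R})$ and $\GL_2(\mathbf{R})$ are cyclic on their spherical vector in the parameter range under consideration. This will yield that $I_R(s)$ itself is generated as a $(\mathfrak{g}, K)$-module by the spherical section. For the unique irreducible quotient, the image $M(w_0, s)(I_R(s))$ sits inside $I_{\ol{R}}(-s)$, where the exponent $-(s+5/2)$ lies strictly in the negative Weyl chamber relative to $\ol{R}$ for $s > 1/2$; Langlands' quotient theorem then identifies this image with the unique irreducible quotient of $I_R(s)$, and combining with cyclicity shows that the quotient is unique.

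The main obstacle will be the rank-one analysis near the boundary $s = 1/2$: certain rank-one principal series of $\SL_2(\mathbf{R})$ sit on the edge of reducibility for parameters just above $1/2$, and one must verify that at such points neither the cyclicity of the spherical vector nor the irreducibility of the image of $M(w_0, s)$ is destroyed. This is precisely the delicate part treated in Jiang \cite[\S 4]{jiang}, and the analysis in our situation should be strictly easier thanks to the smaller rank of the Levi $M_R \cong \GL_2 \times \Sp_2$, so the same strategy will carry through with only routine modifications to the root combinatorics and the explicit rank-one scalars.
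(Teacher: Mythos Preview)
Your proposal misidentifies Jiang's method. What Jiang actually does in \cite[\S 4]{jiang}, and what the paper faithfully adapts, is not a rank-one factorization/Langlands quotient argument but a Bruhat-theoretic computation of the intertwining number $\dim \hom_G(I_{R,\infty}^\mathrm{sm}(s),I_{R,\infty}^\mathrm{sm}(-s))$. One first shows, via Sugiura's classification of Cartan subgroups and a result of Vogan--Przebinda, that passage to the contragredient is implemented by an outer automorphism $\mathrm{Ad}(\delta)$; this reduces the proposition to showing the intertwining number is $1$. That in turn is bounded by analyzing $R\times R$-equivariant distributions on $G$ orbit-by-orbit in the Bruhat decomposition $G=\bigcup_w RwR$, using Bruhat's estimates (transverse symmetric-power bounds) to kill all contributions except the open orbit for $s>\tfrac12$, with a delicate distributional argument to eliminate an extra potential contribution at the exceptional point $s=\tfrac32$.

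Your alternative route has real gaps. The Langlands quotient theorem does not apply directly: the inducing data on $M_R\cong \GL_2\times\Sp_2$ is the trivial representation, which is not tempered, so you cannot invoke the standard statement to conclude a unique irreducible quotient. Your cyclicity argument is also not a proof: the assertion that ``cyclicity propagates'' through a chain of rank-one intertwining operators is not a general fact---knowing that each rank-one operator is nonzero on the spherical vector tells you only that the spherical vector survives to the image, not that it generates the source. What one actually needs (and what the paper proves) is the intertwining-number bound; once that is in hand, unique irreducible quotient and spherical cyclicity follow together from the $\mathrm{Ad}(\delta)$ argument. If you want to salvage an intertwining-operator approach you would still have to prove $\dim\hom(I_R(s),I_R(-s))=1$, and that is exactly the Bruhat computation.
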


Jiang \cite[Chapter 4, Theorem 1.2.1]{jiang} proves a version of Proposition \ref{prop:assume} for the analogous Eisenstein series to $R$ on $\GSp_8$.  The remainder of this section will adapt his arguments to the case at hand.  We emphasize that once we make various relevant calculations for our case, the proof is nearly identical to Jiang's \cite[\S4.2.1, \S4.2.2, \S4.2.3]{jiang}.  We include it for completeness.  Most of the notations below are taken from \emph{loc. cit.}

\subsubsection{Intertwining numbers}

Write $G = \Sp_6(\mathbf{R})$.  Also write $I_R^\mathrm{sm}(s)$ for the space of smooth elements of $I_R(s)$ with compact support modulo $R(\mathbf{R})$.  We also write $K$ simply for the usual maximal compact subgroup of $G$, isomorphic to $U(3)$, and let $I_{R,\infty}^\mathrm{sm}(s)$ denote the subspace of $I_R^\mathrm{sm}(s)$ consisting of $K$-finite functions.  This is the Harish-Chandra model of $I_R(s)$.  We will assume for now that
\begin{equation} \label{eqn:dim1} \dim_\mathbf{C} \hom_G(I_{R,\infty}^\mathrm{sm}(s),I_{R,\infty}^\mathrm{sm}(-s)) =1,\end{equation}
and deduce Proposition \ref{prop:assume} following \cite[\S 4.2.1]{jiang}.  Afterwards, we will prove (\ref{eqn:dim1}).

The proof of Proposition 2.1.2 in Chapter 4 of \cite{jiang}, which assumes the analogue of (\ref{eqn:dim1}), applies equally well to our case once we prove the analogue of Theorem 2.1.2, which shows that the process of taking the contragredient of a principal series representation is equivalent to the adjoint action of a certain element $\delta$.  Following Jiang, we begin by stating Sugiura's classification of conjugacy classes of Cartan subgroups of $\Sp_6$.  We will then study the adjoint action of $\delta$ on these classes and then read off the effect on principal series using the Harish-Chandra classification.

Let $A_1=\set{\diag(t,1,1,1,1,t^{-1})|t \in \mathbf{R}^\times}$.  Define the group $T_{S^1}=\set{\mm{a}{b}{-b}{a}| a^2+b^2=1}$.  We define two conjugacy classes of embeddings of $T_{S^1}$ inside $\Sp_6$ as follows.
\begin{enumerate}
	\item Up to conjugacy, we write $T_1$ for the embedded copy of $T_{S^1}$ defined by
	\[T_1 = \set{\diag(1,t,t^{-1},1):t \in T_{S^1}},\]
	where this matrix has blocks of sizes $1,2,2$, and $1$ down the diagonal.
	\item $T_2$ corresponds to the maximal compact of $\Sp_2$ acting on a symplectic subspace of $V_6$ of dimension 2.
\end{enumerate}
For instance, the conjugacy classes associated to $A_1 \times T_1$ and $A_1 \times A_1 \times T_2$ are of matrices of the form
\[\(\begin{array}{cccc} t & & & \\ & m_1 & & \\ & & {}^tm_1 & \\ & & & t^{-1}\end{array}\) \textrm{ and } \(\begin{array}{ccc}\diag(t,t') & & \\ & m_2 & \\ &  & \diag(t^{\prime-1},t^{-1})\end{array}\),\]
respectively, where $m_1 \in T_1$, $m_2 \in T_2$, and the second matrix is written in $2 \times 2$ block form.  The full list of representatives is
\[A_1^3, A_1^2\times T_2, A_1 \times T_1, A_1 \times T_2^2, T_2 \times T_1,\textrm{ and }T_2^3.\]
Then the Cartan involution $\theta$ given by $g \mapsto {}^tg^{-1}$ fixes each representative, as does the adjoint action of the matrices $\delta^{\pm} = \mm{}{\pm \mathbf{1}_3'}{\mathbf{1}_3'}{} \in \GSp_6$, where $\mathbf{1}_\cdot'$ denotes the antidiagonal 1's matrix.  We note also that the induced action of $\delta^+$ on each $A_1$ or $T_2$ factor is via $h \mapsto h^{-1}$, while $\delta^+$ acts on the $T_1$ factors by $h \mapsto h$.  On the other hand, $\delta' = \mm{}{-\mathbf{1}_2}{\mathbf{1}_2}{} \in K$ acts on the $T_1$ factors by $h \mapsto h^{-1}$.

Vogan and Przebinda prove a relationship between $\pi$ and its attached Harish-Chandra character data and set of lowest $K$ types that is equivariant for the action of automorphisms of $G$ and the process of taking contragredients; see Chapter 4, Theorem 2.1.1 of \cite{jiang} for a concise formulation.  It follows from the above description of Cartan subgroups and action of $\delta^{\pm}$ that each $\pi$ is brought to its contragredient by the adjoint action of $\delta^+$ (or $\delta = \delta^+ \delta^-$, since $\delta^- \in K$).  This is because the $T_1$ factors are already brought to their inverses by a conjugation in $K$ and are fixed by the adjoint action of $\delta^+$, while the $A_1$ and $T_2$ factors are brought to their inverses by $\delta^+$.  In particular, the proof of Proposition 2.1.2 in Chapter 4 of \cite{jiang} now applies in exactly the same way to show that (\ref{eqn:dim1}) implies Proposition \ref{prop:assume}.

\subsubsection{Bruhat decomposition}

We next address the proof of (\ref{eqn:dim1}).  We follow Jiang \cite[\S 4.2.2]{jiang} very closely.  For this we study certain spaces of distributions $T: C_c^\infty(G) \rightarrow \mathbf{C}$, where $C_c^\infty(G)$ denotes smooth functions of compact support on $\Sp_6(\mathbf{R})$.  The space $C_c^\infty(G)$ is given the Schwartz topology as defined in \cite[pg.\ 479]{warner}.  (Also see \cite[\S4]{garrett3} for a more concrete definition.)  We define $C_c^\infty(G)^\vee$ to be the space of continuous linear functionals on $C_c^\infty(G)$.  If $H_1,H_2$ are subgroups of $G$, we write $T^{(h_1,h_2)}$, $(h_1,h_2) \in H_1 \times H_2$ for the function $\phi \mapsto T((h_1,h_2)^{-1}\circ \phi)$, where $((h_1,h_2)^{-1}\circ \phi)(g) = \phi(h_1gh_2^{-1})$.  We say that $T$ has support in a closed subset $S \subseteq G$ if for each $\phi \in C_c^\infty(G \setminus S)$, $T(\phi)=0$.  We define
\[\mathbf{T}_s = \set{T \in C_c^\infty(G)^\vee | T^{(p,q)} = |m_1(pq)|^{s-\frac{5}{2}} T \textrm{ for all }p,q \in R},\]
where $m_1(pq)$ is the matrix $m_1$ appearing in the Levi decomposition (\ref{eqn:levidec}) of the element $pq$.  The dimension $\dim \mathbf{T}_s$ is exactly equal to $\dim_\mathbf{C} \hom_G(I_{R,\infty}^\mathrm{sm}(s),I_{R,\infty}^\mathrm{sm}(-s))$, due to the existence and properties of a trace map taking an intertwiner to a distribution; see \cite[Theorem 5.3.2.1]{warner}.

Recall that we have an embedding $\GSp_2 \boxtimes \GSp_2 \boxtimes \GSp_2 \inj \GSp_6$.  If we write $M_1 \boxtimes M_2 \boxtimes M_3$ for $2 \times 2$ matrices $M_i$, we mean the matrix obtained by having $M_i$ act on $\gen{e_i,f_i}$ in $V_6$.  Consider the Bruhat decomposition $G = \cup_w R w R$.  Write $\mathbf{1}_k'$ to be the antidiagonal 1's matrix as above and  define $\mathbf{J}_2 = \mm{0}{-1}{1}{0}$.  A set of Weyl group representatives are $w_{(i,i)} = \mathbf{1}_2^{\boxtimes i} \boxtimes \mathbf{J}_2^{\boxtimes 2-i} \boxtimes \mathbf{1}_2$ for $i \in \set{1,2}$, $w_{(1,2)} = \diag(1,\mathbf{1}_2',\mathbf{1}_2',1)$,
\[w_{(0,0)} = \(\begin{array}{c|c|c|c|c|c} & & & & 1 &\\ \hline & & & & &1\\ \hline  & & 1 & & &\\ \hline & & & 1 & & \\ \hline -1 & & & & &\\ \hline & -1 & &  & &\end{array}\), \textrm{ and } w_{(0,1)}=\(\begin{array}{c|c|c|c|c|c} & & 1 & & &\\ \hline & & & & 1 &\\ \hline 1 & & & & &\\ \hline & & & & & 1\\ \hline & -1 & & & &\\ \hline & & & 1 & &\end{array}\).\]
We write $O_{(i,j)} = R w_{(i,j)} R$ and set $\Omega_{(i,i)} = \Omega_{(i-1,i)} \cup O_{(i,i)}$ and $\Omega_{i,i+1} = \Omega_{(i,i)}\cup O_{(i,i+1)}$.  Considering $i+j$ gives a linear chain of containments of open subsets $\Omega_{(i,j)}$ of $G$.  Then define
\begin{align*}
  \mathbf{T}_{(i,j)} &= \set{T \in \mathbf{T}_s| \mathrm{Supp}(T) \subseteq \ol{O_{(i,j)}}},\\
	\mathbf{T}_s[O_{(i,j)}] &= \set{T|_{\Omega_{(i,j)}}|T \in \mathbf{T}_{(i,j)}}, \textrm{ and}\\
	\mathbf{T}_s(O_{(i,j)}) &= \set{T\in C_c^\infty(\Omega_{(i,j)})^\vee \right| T^{(p,q)} = |\det m_1(pq)|^{s-\frac{5}{2}}T, \textrm{ for }p,q \in R(\mathbf{R}), \mathrm{Supp}(T) \subseteq O_{(i,j)}\left.}.
\end{align*}
Then $\mathbf{T}_s[O_{(i,j)}] \subset \mathbf{T}_s(O_{(i,j)})$ and
\begin{equation} \label{eqn:orbitineq} \dim \mathbf{T}_s \le \sum_{i,j} \dim \mathbf{T}_s[O_{(i,j)}] \le \sum_{i,j} \dim\mathbf{T}_s(O_{(i,j)}).\end{equation}

\subsubsection{Bruhat estimates}

Let $R_{(i,j)} = R \cap w_{(i,j)}^{-1} Rw_{(i,j)}$ and let $\delta_{(i,j)} = \delta_{R_{(i,j)}}$.  Write $\mathfrak{sp}_{6,\mathbf{C}}$ and $\mathfrak{r}_\mathfrak{C}$ for the complexified Lie algebras of $\Sp_6$ and $R$.  Define $\Xi_{(i,j)}^n$ to be the twist of the $n^\mathrm{th}$ symmetric power of the adjoint representation of $R_{(i,j)}(\mathbf{R})$ on the quotient space
\[\mathfrak{O}_{(i,j)}=\mathfrak{sp}_{6,\mathbf{C}}/(\mathfrak{r}_\mathfrak{C} + \mathrm{ad}(w_{(i,j)}^{-1})\mathfrak{r}_\mathfrak{C})\]
by the character $|\det m_1(pw_{(i,j)}pw_{(i,j)}^{-1})|^{-5}\delta_{(i,j)}(p)$.  Also, let $\Psi_{s,(i,j)}$ denote the character
\[\Psi_{s,(i,j)}(p) = |\det m_1(pw_{(i,j)}pw_{(i,j)}^{-1})|^{s-\frac{5}{2}}.\]
Finally, let
\[i(s,O_{(i,j)},n)=\dim\hom_{R_{(i,j)}(\mathbf{R})}(\Psi_{s,(i,j)}, \Xi_{(i,j)}^n).\]

Theorem 5.3.2.3 of \cite{warner} shows that
\begin{equation} \label{eqn:warnerbound} \dim\mathbf{T}_s(O_{(i,j)}) \le \sum_{n=0}^\infty i(s,O_{(i,j)},n) \textrm{ and }\dim\mathbf{T}_s(O_{(0,0)}) \le i(s,O_{(0,0)},0).\end{equation}
We will compute these bounds in our case.  Observe that
\[R_{(0,0)} = \diag(m_1,m_2,m_1^*), R_{(1,1)} = \(\begin{array}{ccccc} t_1 & * & * & * & *\\ & t_2 & & & * \\ & & m_2 & & *\\ & & & t_2^{-1} & *\\ & & & & t_1^{-1}\end{array}\), \textrm{ and } R_{(2,2)} = R,\]
where $m_2 \in \Sp_2(\mathbf{R})$.  We have $\Psi_{s,(0,0)}(p)=1$, $\Psi_{s,(1,1)}(p) = |t_1|^{2s-5}$, and $\Psi_{s,(2,2)}(p) = |\det m_1|^{2s-5}$.  Writing $\Lambda_n = \mathrm{Sym}^n\mathfrak{O}_{(i,j)}$ for each case, we also have $\Xi_{(0,0)}^n(p) = \Lambda_n(p)$, $\Xi_{(1,1)}^n(p) = |t_1|^{-4} \Lambda_n(p)$, and $\Xi_{(2,2)}^n(p) = |\det m_1|^{-5} \Lambda_n(p)$.  Since $\mathfrak{O}_{(0,0)}=1$, we have $\Xi_{(0,0)}^n(p) = 1$.

For the remaining Weyl group elements, we have
\[R_{(0,1)} = \(\begin{array}{cccccc} t_1 & * &  & * &  & * \\ & t_2 & & * & &  \\ & & t_3 & * & * & *\\ & & & t_3^{-1} & & \\ & & & & t_2^{-1} & * \\ & & & & & t_1^{-1}\end{array}\) \textrm{ and } R_{(1,2)} =  \(\begin{array}{cccccc} t_1 & * & * & * & * & * \\ & t_2 & & * & * & * \\ & & t_3 & * & * & *\\ & & & t_3^{-1} & & * \\ & & & & t_2^{-1} & * \\ & & & & & t_1^{-1}\end{array}\).\]
It follows that $\Psi_{s,(0,1)}(p) = |t_1t_3|^{s-\frac{5}{2}}$ and $\Psi_{s,(1,2)}(p) = |t_1^2t_2t_3|^{s-\frac{5}{2}}$.  We also have $\Xi_{(0,1)}^n(p) = |t_1t_3|^{-1}\Lambda_n(p)$ and $\Xi_{(1,2)}^n(p) = |t_1^2t_2t_3|^{-2}\Lambda_n(p)$.

We consider the restriction of the representation $\mathfrak{O}_{(i,i)}$ to the Levi of $P_{(i,i)}$.  Note that the bound (\ref{eqn:warnerbound}) for the case $(i,j)=(0,0)$ is always 1, coming from the identity map between trivial characters.  Using the notation for $R_{(i,j)}$ above, the representation $\mathfrak{O}_{(1,1)}$ is $t_1^2 \otimes \mathbf{1}_{\mathrm{Sp}_2} \oplus t_1 \otimes \mathbf{C}_{\mathrm{Sp}_2}^2$, where $\mathbf{1}_{\mathrm{Sp}_2}$ and $\mathbf{C}_{\mathrm{Sp}_2}^2$ are the standard 1 and 2-dimensional representions of $m_2 \in \Sp_2$.  For $\mathfrak{O}_{(2,2)}$, we have
\[\underbrace{\mathrm{Sym}^2(\det m_1\oplus \det m_1)}_{=((\det m_1)^2)^{\oplus 3}} \otimes \mathbf{1}_{\mathrm{Sp}_2} \oplus (\det m_1)^{\oplus 2} \otimes \mathbf{C}_{\mathrm{Sp}_2}^2.\]
For the purposes of computing the intertwining space, the only term of $\Lambda_n$ we need to consider comes from the $n^\textrm{th}$ power of $t_1^2$ or $((\det m_1)^2)^{\oplus 3}$ in the respective cases.  But if $n \ge 1$, there are no such maps with $\mathrm{Re}(s)$ positive, since we would require a solution to $2s-5 = -4-2n$ or $2s-5 = -5-2n$.  So for $s > 0$, the only nonzero $i(s,O_{(i,i)},n)$ have $n=0$ and we have
\[i(s,O_{(2,2)},0) = 0 \textrm{ and } i(s,O_{(1,1)},0) = \begin{cases} 1 & s = \frac{1}{2}\\ 0 & \textrm{ otherwise}. \end{cases}\]

Next, we consider the restriction of the representation $\mathfrak{O}_{(i,i+1)}$ to the Levi of $P_{(i,i+1)}$, which in this case is abelian.  For $i=0$ we get the character $t_1t_3$, and for $i=1$, we get $t_1t_3 \oplus t_1t_2 \oplus t_1^2 \oplus t_3t_2$.  If we assume $\mathrm{Re}(s) > \frac{1}{2}$, it follows in the same way as before that only the $n=0$ term contributes.  (We have $i(\frac{1}{2},O_{(0,1)},1) = 1$, in contrast to the case treated by Jiang.)  In this case, for $\mathrm{Re}(s) > \frac{1}{2}$,
\[i(s,O_{(1,2)},0) = 0 \textrm{ and } i(s,O_{(0,1)},0) = \begin{cases} 1 & s = \frac{3}{2}\\ 0 & \textrm{ otherwise}. \end{cases}\]
Combining these calculations, it follows that for $\mathrm{Re}(s) > \frac{1}{2}$,
\[\dim_\mathbf{C} \hom_G(I_{R,\infty}^\mathrm{sm}(s),I_{R,\infty}^\mathrm{sm}(-s)) \le 1 \textrm{ if }s \ne \frac{3}{2} \textrm{ and }\dim_\mathbf{C} \hom_G\(I_{R,\infty}^\mathrm{sm}\(\frac{3}{2}\),I_{R,\infty}^\mathrm{sm}\(-\frac{3}{2}\)\) \le 2.\]
The remainder of the proof will deal with the exceptional case $s=\frac{3}{2}$.

\subsubsection{Intertwining operators and distributions on $O_{(0,0)}$}

From the bound (\ref{eqn:orbitineq}), it would be sufficient to prove that $\mathbf{T}_{\frac{3}{2}}[O_{(0,0)}] = 0$.  We observe that $\mathbf{T}_{\frac{3}{2}}[O_{(0,0)}] \subseteq \mathbf{T}_{\frac{3}{2}}(O_{(0,0)})$, and it turns out that $\dim_\mathbf{C} \mathbf{T}_{\frac{3}{2}}(O_{(0,0)}) = 1$.  We construct the nontrivial member of this space and show that it cannot be extended to an element of $\mathbf{T}_{\frac{3}{2}}[O_{(0,0)}]$.  Consider the intertwining operator
\[\mathfrak{M}_{(0,0)}(s): I_{R,\infty}^\mathrm{sm}(s) \rightarrow I_{R,\infty}^\mathrm{sm}(-s), \mathfrak{M}_{(0,0)}(s)(f_s)(g) = \int_{U_R(\mathbf{R})}f(w_{(0,0)}ng)dn.\]
The integral is convergent for $\mathrm{Re}(s)$ sufficiently large and has meromorphic continuation.  To simplify notation we write $\zeta(s) = \Gamma(\frac{s}{2})$.  Then the normalizing factor
\[c_{(0,0)}(s) = \frac{\zeta(s-\frac{1}{2})\zeta(2s)\zeta(s-\frac{3}{2})}{\zeta(w+\frac{3}{2})\zeta(2w+1)\zeta(w+\frac{5}{2})}\]
has the property that $c_{(0,0)}^{-1}(s)\mathfrak{M}_{(0,0)}(s)$ is holomorphic for $\mathrm{Re}(s) > 0$ and identifies the normalized spherical sections.

It follows that for $\mathrm{Re}(s) > \frac{1}{2}$, the integral $\mathfrak{M}_{(0,0)}(s)$ has a unique simple pole at $s = \frac{3}{2}$.  For a function $\varphi \in C_c^\infty(G)$, define the canonical projection
\[\mathrm{Pr}_s(\varphi)(g) = \int_{R(\mathbf{R})} |\det m_1|^{-s - \frac{5}{2}}f(pg)dp,\]
where $m_1$ stands for the top left $2 \times 2$ block of $p$ (as defined in (\ref{eqn:levidec})).  Then we can use $\mathfrak{M}_{(0,0)}(s)$ to define a distribution on $\varphi \in C_c^\infty(G)$ by
\begin{equation} \label{eqn:defdist} \widetilde{\mathfrak{M}}_{(0,0)}(s)(\varphi) = \mathfrak{M}_{(0,0)}(s)(\mathrm{Pr}_s(\varphi))(\mathbf{1}_{G(\mathbf{R})}),\end{equation}
where $\mathbf{1}_{G(\mathbf{R})}$ denotes the identity element.  This is not meaningful at the poles, including the value $s = \frac{3}{2}$.  However, the same formula defines $\widetilde{\mathfrak{M}}_{(0,0)}(\frac{3}{2})$ as a distribution on the smaller space $C_c^\infty(\Omega_{(0,0)})$.  In fact, for $p_1,p_2 \in R(\mathbf{R})$, an easy computation yields
\begin{equation} \label{eqn:p1p2equiv} (p_1,p_2)\circ \widetilde{\mathfrak{M}}_{(0,0)}(s) = |\det m_1(p_1p_2)|^{s-\frac{5}{2}}\widetilde{\mathfrak{M}}_{(0,0)}(s)(\varphi),\end{equation}
so $\widetilde{\mathfrak{M}}_{(0,0)}(s) \in \mathbf{T}_s(O_{(0,0)})$ for all $s$.

To obtain our needed dimension bound, it suffices to verify that $\widetilde{\mathfrak{M}}_{(0,0)}(\frac{3}{2})\in \mathbf{T}_{\frac{3}{2}}(O_{(0,0)})$ cannot be extended to an element of $\mathbf{T}_{\frac{3}{2}}[O_{(0,0)}]$.  We use the argument of \cite[\S 4.2.3]{jiang}.  Write
\[\widetilde{\mathfrak{M}}_{(0,0)}(s)(\varphi) = \frac{1}{s-\frac{3}{2}} \mathrm{Res}_{s=\frac{3}{2}}(\widetilde{\mathfrak{M}}_{(0,0)}(s))(\varphi) + \mathrm{PV}_{s=\frac{3}{2}}(\widetilde{\mathfrak{M}}_{(0,0)}(s))(\varphi) + O\(s-\frac{3}{2}\),\]
where $\mathrm{PV}_{s=\frac{3}{2}}(\widetilde{\mathfrak{M}}_{(0,0)}(s))(\varphi)$ denotes the second term in the Taylor expansion of $\widetilde{\mathfrak{M}}_{(0,0)}(s)(\varphi)$ around $s=\frac{3}{2}$.  To ease notation, define $\Lambda^{(-1)}(\frac{3}{2})=\mathrm{Res}_{s=\frac{3}{2}}(\widetilde{\mathfrak{M}}_{(0,0)}(s))$ and $\Lambda^{(0)}(\frac{3}{2}) = \mathrm{PV}_{s=\frac{3}{2}}(\widetilde{\mathfrak{M}}_{(0,0)}(s))$.  Observe that due to the normalized intertwining operator having a simple pole at $s= \frac{3}{2}$, $\Lambda^{(-1)}(\frac{3}{2})$ defines a distribution on $C_c^\infty(G)$, as does $\Lambda^{(0)}(\frac{3}{2})$.

We have a Taylor expansion
\[|\det m_1(p_1p_2)|^{s-\frac{5}{2}} = |\det m_1(p_1p_2)|^{-1} + |\det m_1(p_1p_2)|^{-1}\ln|\det m_1(p_1p_2)|(s-\frac{3}{2}) + O((s-\frac{3}{2})^2)\]
of the factor in (\ref{eqn:p1p2equiv}).  Substituting this expansion into the equivariance equation for $\widetilde{\mathfrak{M}}_{(0,0)}(s)$, we see that
\[(p_1,p_2)\circ \Lambda^{(-1)}\(\frac{3}{2}\) = |\det m_1(p_1p_2)|^{-1} \Lambda^{(-1)}\(\frac{3}{2}\)\]
as distributions and
\begin{equation} \label{eqn:invpv} (p_1,p_2)\circ \Lambda^{(0)}\(\frac{3}{2}\) = |\det m_1(p_1p_2)|^{-1}\Lambda^{(0)}\(\frac{3}{2}\) + \ln|\det m_1(p_1p_2)|^{-1}\Lambda^{(-1)}\(\frac{3}{2}\).\end{equation}
Since $\Lambda^{(-1)}(\frac{3}{2})$ vanishes on $C^\infty_c(\Omega_{(0,0)})$, it defines an element of $\mathbf{T}_{\frac{3}{2}}[O_{(0,1)}]$.  It also follows that $\Lambda^{(0)}(\frac{3}{2})$ defines an extension of $\widetilde{\mathfrak{M}}_{(0,0)}(s)$ from $C^\infty_c(\Omega_{(0,0)})$ to $C_c^\infty(G)$.

Now assume that there exists a nonzero element $\Lambda \in \mathbf{T}_{\frac{3}{2}}[O_{(0,0)}]$, or, after scaling, a distribution $\Lambda \in C^\infty_c(G)^\vee$ that satisfies the invariance property defining $\mathbf{T}_{\frac{3}{2}}$ and restricts to $\widetilde{\mathfrak{M}}_{(0,0)}(\frac{3}{2})$ on $C_c^\infty(\Omega_{(0,0)})$.  We consider $\Lambda' = \Lambda-\Lambda^{(0)}(\frac{3}{2})$; observe that by (\ref{eqn:invpv}), $\Lambda^{(0)}(\frac{3}{2})$ fails to satisfy $(p_1,p_2)$-invariance, so $\Lambda'$ must be nonzero.  Moreover, $\Lambda'$ is a distribution supported on $\ol{O}_{(0,1)}$ with
\begin{equation} \label{eqn:lambdatrans} (p_1,p_2) \circ \Lambda' = |\det m_1(p_1p_2)|^{-1} \Lambda' + \ln|\det m_1(p_1p_2)|^{-1}\Lambda^{(-1)}\(\frac{3}{2}\)\end{equation}
for $p_1,p_2 \in R(\mathbf{R})$.  Note as a special case that for $n_1,n_2$ in the unipotent radical $U_R(\mathbf{R})$, $(p_1,p_2) \circ \Lambda' = \Lambda'$.

\subsubsection{Intertwining operators and distributions on $O_{(0,1)}$}

To proceed further, it will be necessary to explicitly identify the distribution $\Lambda^{(-1)}(\frac{3}{2})$ in order to compute its support.  Notice that $\Lambda^{(-1)}(\frac{3}{2}) \in \mathbf{T}_{\frac{3}{2}}[O_{(0,1)}]$ and $\dim \mathbf{T}_{\frac{3}{2}}[O_{(0,1)}] = 1$, so it suffices to find a nonzero element of this space.

Following Jiang \cite[p.\ 142]{jiang}, we will construct $\Lambda^{(-1)}(\frac{3}{2})$ as the specialization of a composition of two intertwining operators.  Unlike $\mathfrak{M}_{(0,0)}(s)$, this composition will not give a homomorphism $I_{R,\infty}^\mathrm{sm}(s) \rightarrow I_{R,\infty}^\mathrm{sm}(-s)$ for varying $s$, but instead will specialize to one only at $s=\frac{3}{2}$.

We recall that the Levi factor $M_R$ of $R$ has been identified with $\GL_2 \times \Sp_2$.  We refer to the Borel subgroups of the respective factors by $B_{\GL_2}$ and $B_{\Sp_2}$.  We will define an intertwining operator
\[\mathfrak{M}_{(0,1)}(s): I_{R,\infty}^\mathrm{sm}(s) \rightarrow \ind_{R(\mathbf{R})}^{G(\mathbf{R})}\(\left[|\det|^{-\frac{2s+3}{4}} \ind_{B_{\GL_2}(\mathbf{R})}^{\GL_2(\mathbf{R})} |\frac{t_1}{t_2}|^{\frac{2s-1}{4}}\right] \otimes \ind_{B_{\Sp_2}(\mathbf{R})}^{\Sp_2(\mathbf{R})} |t_3|^{s-\frac{1}{2}}\),\]
where the tensor product is between representations of the respective factors $\GL_2$ and $\Sp_2$ of the Levi of $R$ and we have used the notation
\[\diag(t_1,t_2,t_3,t_3^{-1},t_2^{-1},t_1^{-1})\]
for an element of the torus.  The operator $\mathfrak{M}_{(0,1)}(s)$ is defined by the formula
\[\mathfrak{M}_{(0,1)}(s)(f_s) = \int_{U_{w_{(0,1)}}(\mathbf{R})} f_s(w_{(0,1)}ng)dn, \textrm{ where }U_{w_{(0,1)}} = \(\begin{array}{cccccc} 1 &  & * &  & * &  \\ & 1 & * &  & * & * \\ & & 1 &  &  & \\ & & & 1 & * & * \\ & & & & 1 &  \\ & & & & & 1\end{array}\).\]
We next define the intertwining operator
\begin{align*} \mathfrak{M}'(s):&\ind_{R(\mathbf{R})}^{G(\mathbf{R})}\(\left[|\det|^{-\frac{2s+3}{4}} \ind_{B_{\GL_2}(\mathbf{R})}^{\GL_2(\mathbf{R})} |\frac{t_1}{t_2}|^{\frac{2s-1}{4}}\right] \otimes \ind_{B_{\Sp_2}(\mathbf{R})}^{\Sp_2(\mathbf{R})} |t_3|^{s-\frac{1}{2}}\)\\
 \rightarrow &\ind_{R(\mathbf{R})}^{G(\mathbf{R})}\(\left[|\det|^{-\frac{2s+3}{4}} \ind_{B_{\GL_2}(\mathbf{R})}^{\GL_2(\mathbf{R})} |\frac{t_1}{t_2}|^{-\frac{2s-1}{4}}\right] \otimes \ind_{B_{\Sp_2}(\mathbf{R})}^{\Sp_2(\mathbf{R})} |t_3|^{-s+\frac{1}{2}}\)
\end{align*}
to be the composite of the usual nontrivial intertwining operators on $\GL_2$ and $\Sp_2$.  More precisely, let $w' = \diag(\mathbf{J}_2,\mathbf{J}_2,\mathbf{J}_2)$ in block diagonal form and define
\[\mathfrak{M}'(s)(f_s) = \int_{U'(\mathbf{R})} f_s(w'ng)dn, \textrm{ where }U' = \diag(\mm{1}{*}{0}{1},\mm{1}{*}{0}{1},\mm{1}{*}{0}{1}).\]
The normalizing factor for the composition $\mathfrak{M}_{(0,1)}'(s) = \mathfrak{M}'(s) \circ \mathfrak{M}_{(0,1)}(s)$ is given by
\[c(s)=\frac{\zeta(s-\frac{1}{2})^2\zeta(2s)}{\zeta(s+\frac{5}{2})\zeta(s+\frac{3}{2})\zeta(2s+1)}.\]
Notice that this factor is holomorphic at $s=\frac{3}{2}$ and that the range of $\mathfrak{M}_{(0,1)}'(\frac{3}{2})$ is $I_{R,\infty}^\mathrm{sm}(-\frac{3}{2})$.  Due to this holomorphy and using the definition (\ref{eqn:defdist}), $\mathfrak{M}_{(0,1)}'(\frac{3}{2})$ defines a non-zero distribution $\widetilde{\mathfrak{M}}_{(0,1)}'(\frac{3}{2})$ on all of $C_c^\infty(G)$.

\subsubsection{Calculus of distributions}

The form of the defining integral implies that $\widetilde{\mathfrak{M}}_{(0,1)}'(\frac{3}{2})$ has support equal to $\ol{O}_{(0,1)}$.  Moreover, we have
\[(p_1,p_2) \circ \widetilde{\mathfrak{M}}_{(0,1)}'(\frac{3}{2}) = |\det m_1(p_1p_2)|^{-1} \widetilde{\mathfrak{M}}_{(0,1)}'(\frac{3}{2}),\]
so $\widetilde{\mathfrak{M}}_{(0,1)}'(\frac{3}{2}) \in \mathbf{T}_{\frac{3}{2}}[O_{(0,1)}]$.  Recall that this space is at most 1-dimensional by our bound above.  It follows that $\Lambda^{(-1)}(\frac{3}{2})$ is proportional to $\widetilde{\mathfrak{M}}_{(0,1)}'(\frac{3}{2})$.  In particular, the support of $\Lambda^{(-1)}(\frac{3}{2})$ is equal to $\ol{O}_{(0,1)}$.  It is then easy to see using (\ref{eqn:lambdatrans}) that $\Lambda'$ has support equal to $\ol{O}_{(0,1)}$.

We now show that a $\Lambda' \in C_c^\infty(G)^\vee$ that satisfies (\ref{eqn:lambdatrans}) and has support equal to $\ol{O}_{(0,1)}$ cannot exist.  We can restrict both $\Lambda'$ and $\Lambda^{(-1)}(\frac{3}{2})$ to $C_c^\infty(\Omega_{(0,1)})$, and again to $C_c^\infty(O_{(0,1)})$, since they are supported away from $\Omega_{(0,0)} = \Omega_{(0,1)} \setminus O_{(0,1)}$.  Both are still nonzero because their support was originally $\ol{O}_{(0,1)}$.  Write $B$, $T$, and $U_B$ for the upper-triangular Borel of $G$, diagonal torus of $G$, and the unipotent radical of $B$, respectively.  We finally restrict again to $C_c^\infty(\Omega)$ for the open subset $\Omega \subseteq O_{(0,1)}$ given by $\Omega = U_B(\mathbf{R})T(\mathbf{R})w_\ell U_{w_\ell}(\mathbf{R})$, where $w_\ell = w'w_{(0,1)} w^{\prime -1}$ is the longest element in the double coset $W_Rw_{(0,1)}W_R$, $w' = \diag(\mathbf{J}_2,\mathbf{J}_2,\mathbf{J}_2)$ as above, and
\[U_{w_\ell} = \(\begin{array}{cccc} 1 & * & * & * \\ &  \mathbf{1}_2 & * & * \\ & & \mathbf{1}_2 & *\\ & & & 1\end{array}\).\]

Note that (\ref{eqn:lambdatrans}) implies that $\Lambda'$ is not $(p_1,p_2)$-equivariant, while $\Lambda^{(-1)}(\frac{3}{2})$ is, so to derive a contradiction it would suffice to show that $\Lambda^{(-1)}(\frac{3}{2}) = -\Lambda'$.  Define by $L_X$ or $R_X$, respectively, the action of an element $X$ of the Lie algebra $\mathfrak{g}$ of $G(\mathbf{R})$ by differentiation on functions on the left or right of $C_c^\infty(\Omega)^\vee$.  Let $\mathfrak{t}$ be the Lie algebra of $T(\mathbf{R})$; we identify $\mathfrak{t}$ with $\mathbf{R}^3$ in such a way that the exponential map is
\[\mathbf{R}^3 \ni (a_1,a_2,a_3) \mapsto \diag(\exp(a_1),\exp(a_2),\exp(a_3),\exp(-a_3),\exp(-a_2),\exp(-a_1)).\]
For all $a,b \in \mathbf{R}$, $(0,a,b) \in \mathfrak{t}$ satisfies $L_{(0,a,b)} = -aT-a\Lambda^{(-1)}(\frac{3}{2}) = R_{(0,a,b)}$ by (\ref{eqn:lambdatrans}), so we need only show that $L_{(0,1,0)} = R_{(0,1,0)} = 0$ to prove the result.  The property (\ref{eqn:lambdatrans}) also implies $L_X \ast \Lambda' = 0 = R_X \ast \Lambda'$ for any $X \in \mathfrak{u}_B$, the Lie algebra of $U_B(\mathbf{R})$, as well as $L_{(a,-a,0)} \ast \Lambda' = 0 = R_{(a,-a,0)} \ast \Lambda'$ and $L_{(0,0,a)} \ast \Lambda' = 0 = R_{(0,0,a)} \ast \Lambda'$.

We can compute the action of $X=(0,a,b)$ on the left and right in terms of the decomposition $\Omega=U_B(\mathbf{R})T(\mathbf{R})w_\ell U_{w_\ell}(\mathbf{R})$.  Note that this decomposition gives an isomorphism $U_B(\mathbf{R}) \times T(\mathbf{R}) \times U_{w_\ell}(\mathbf{R}) \rightarrow \Omega$ of real manifolds.  So $\varphi \in C_c^\infty(\Omega)$ can be written as $\varphi(n,t,n')$.  By definition, the right action $L_X$ on $\Lambda'$ is given by
\[(L_X \ast \Lambda')(\varphi) = (\Lambda')(X \ast \varphi) = (\Lambda')(ntw_\ell n' \mapsto \frac{d}{dt}\varphi(\exp(tX)ntw_\ell n').\]
We observe that
\[\exp(tX)ntw_\ell n' = (\exp(tX)n\exp(-tX))\exp(tX)tw_\ell n',\]
where since $T$ normalizes $U_B$, the right-hand side now corresponds to $\varphi(\mathrm{ad}(tX)(n),\exp(tX)t,n')$.  In other words,
\[(L_X \ast \Lambda')(\varphi) = \Lambda'(L_{X'} \ast \varphi + (at_2 \frac{\partial}{\partial t_2} + bt_3\frac{\partial}{\partial t_3})(\varphi))=\Lambda'(at_2 \frac{\partial \varphi}{\partial t_2} + bt_3\frac{\partial \varphi}{\partial t_3})\]
for some $X' \in \mathfrak{u}_B$.  We similarly use
\[ntw_\ell n'\exp(-tX) = n (tw_\ell\exp(-tX)w_\ell^{-1}) w_\ell (\exp(tX)n'\exp(-tX))\]
to obtain
\[(R_X \ast \Lambda')(\varphi) = \Lambda'(R_{X''} \ast \varphi + (bt_2 \frac{\partial}{\partial t_2} + at_3\frac{\partial}{\partial t_3})(\varphi))=\Lambda'(-bt_2 \frac{\partial\varphi}{\partial t_2} -at_3\frac{\partial\varphi}{\partial t_3})\]
for $X'' \in \mathfrak{u}_{w_\ell}$, the Lie algebra of $U_{w_\ell}(\mathbf{R})$.  It follows from varying $a$ and $b$ in $L_X \ast \Lambda' = R_X \ast \Lambda'$ that $L_X \ast \Lambda' = R_X \ast \Lambda'=0$ for all $a,b$.  Setting $a=1$ and $b=0$ gives the identity we need.

\subsection{Residues of the two-variable Eisenstein series} \label{subsec:res}

In the remainder of the section, we only consider poles of $L$-functions to the right of the line with real part $\frac{1}{2}$.  We also shift the variable $w$ of the Eisenstein series by 2 so that the functional equation is given by $w \mapsto 1-w$ instead of $w \mapsto 5-w$.  Recall that we have used $P$ to denote the non-maximal parabolic stabilizing the flag $\gen{f_1,f_2} \subseteq \gen{f_1,f_2,f_3}$.  We can write the section of the two variable Eisenstein series as lying in an iterated induction.  Write $R$ for the maximal parabolic with Levi factor $\GL_2 \times \GSp_2$.  Here, $\GSp_2$ refers to the group of matrices of the form $\diag(\mathbf{1}_2,m,\mathbf{1}_2)$ and $B_{\GSp_2}$ is the upper triangular Borel subgroup of $\GSp_2$.  Recall that our section is an element $f_{s,w} \in \ind_{P(\mathbf{A})}^{G(\mathbf{A})} \chi_{s,w}$, where $\chi_{w,s}(m_1,m_2,\mu) = |\det m_1|_{\mathbf{A}}^{w+2} |m_2|_{\mathbf{A}}^{2s}|\mu|_{\mathbf{A}}^{-s-w-2}$ in the notation of (\ref{eqn:defp}).  We regard $f_{s,w}$ as being in the iterated induction
\[f_{s,w} \in \ind_{R(\mathbf{A})}^{G(\mathbf{A})} \(|\det m_1|_{\mathbf{A}}^{w+2} \cdot \(\ind_{P(\mathbf{A})}^{R(\mathbf{A})} |\mu|_{\mathbf{A}}^{-s-w-2}|m_2|_{\mathbf{A}}^{2s}\)\).\]
We may identify the inner induction with $\ind_{B_{\GSp_2}(\mathbf{A})}^{\GSp_2(\mathbf{A})} |\mu|_{\mathbf{A}}^{-s-w-2}|m_2|_{\mathbf{A}}^{2s}$.

Denote by $\chi'_w: R(\A) \rightarrow \C^\times$ the unique character whose restriction to $P(\A)$ is $\chi_{w, 0}$, the character $\chi_{w,s}$ evaluated at $s=0$.  For $f \in \ind_{P(\A)}^{\GSp_6(\A)}(\chi_{w,s})$, set 
\[r(f)(g,w) = \mathrm{res}_{s=1} \sum_{\gamma \in P(\Q)\backslash R(\Q)}{f(\gamma g, s,w)}.\]
One has the following lemma.
\begin{lemma}\label{s=1:lemma} For $f$ and $r(f)$ as above, one has $r(f) \in \ind_{R(\A)}^{\GSp_6(\A)}(\chi'_w)$.  Furthermore, the residue at $s=1$ of the Eisenstein series $E(g,s,w) = \sum_{\gamma \in R(\Q)\backslash \GSp_6(\Q)}{f(\gamma g,s,w)}$ is $E(g,w,r(f)) = \sum_{\gamma \in R(\Q) \backslash \GSp_6(\Q)}{r(f)(\gamma g,w)}$. \end{lemma}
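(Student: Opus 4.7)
The plan is to factor the sum over $P(\Q)\backslash \GSp_6(\Q)$ defining $E(g,s,w)$ through $R(\Q)$, exploiting the containment $P \subseteq R$. With
\[F(g, s, w) := \sum_{\gamma \in P(\Q) \backslash R(\Q)} f(\gamma g, s, w),\]
we formally have $E(g, s, w) = \sum_{\delta \in R(\Q) \backslash \GSp_6(\Q)} F(\delta g, s, w)$, and by definition $r(f)(g, w) = \mathrm{res}_{s=1} F(g, s, w)$. The lemma then reduces to two assertions: (i) $r(f)$ transforms by $\chi'_w$ under left translation by $R(\A)$, and (ii) $\mathrm{res}_{s=1}$ commutes with the outer summation over $\delta$.

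For (i), the key observation is that $P$ is the preimage in $R$ of $\GL_2 \times B_{\GSp_2}$ under the Levi projection $R \twoheadrightarrow R/U_R \cong \GL_2 \times \GSp_2$, where $B_{\GSp_2}$ denotes the Borel of $\GSp_2$. Accordingly $P(\Q) \backslash R(\Q) \cong B_{\GSp_2}(\Q) \backslash \GSp_2(\Q)$, so $F(\cdot, s, w)$ is essentially a $\GL_2$-Eisenstein series in the $\GSp_2$-coordinate of $g$, with the $\GL_2$- and similitude-parts of $\chi_{w,s}$ sitting outside as inducing data on $R$. A direct check shows that on the torus of $B_{\GSp_2}$, $\chi_{w,s}$ agrees with $\delta_{B_{\GSp_2}} \cdot \chi'_w$ at $s=1$; by the classical theory of $\GL_2$-Eisenstein series, this is exactly where the inner series has a simple pole with residue a constant function on $\GSp_2(\A)$. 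Factoring out that constant, what remains has the right equivariance, yielding $r(f) \in \ind_{R(\A)}^{\GSp_6(\A)}(\chi'_w)$.

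For (ii), the argument takes place in a region where $\mathrm{Re}(w)$ is large. By \cite[Proposition II.1.5]{mw}, the double series for $E(g, s, w)$ converges absolutely and uniformly on compacta in $g$ and on a punctured disk around $s = 1$, provided $\mathrm{Re}(w)$ is sufficiently large. This uniformity allows one to extract the $(s-1)^{-1}$-coefficient term by term in the outer sum, giving $\mathrm{res}_{s=1} E(g, s, w) = \sum_\delta r(f)(\delta g, w)$; the right-hand side is the Eisenstein series on $\GSp_6$ from $R$ built out of $r(f)$, which converges absolutely for $\mathrm{Re}(w)$ large by the same reference. Meromorphic continuation in $w$ then propagates the identity to all $g$ and $w$ where both sides are defined. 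The main technical point is this uniform convergence in a punctured neighborhood of $s=1$ — everything else is formal once it is in hand, and it follows from standard estimates as in \cite{mw}.
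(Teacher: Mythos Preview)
Your proposal is correct and follows essentially the same approach as the paper: the paper's entire proof is the single sentence ``This follows from the fact that the residues of degenerate Eisenstein series on $\GL_2$ are constant,'' and your argument is a careful unpacking of exactly this point, identifying the inner sum $F(g,s,w)$ with a $\GL_2$-type Eisenstein series on the $\GSp_2$ factor of the Levi of $R$ and invoking the constancy of its residue at $s=1$. Your additional discussion of convergence in part (ii) supplies justification that the paper leaves implicit.
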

\begin{proof} This follows from the fact that the residues of degenerate Eisenstein series on $\GL_2$ are constant.\end{proof}

Let $R' = R \cap Sp_6$ and let $Q \subseteq \Sp_6$ be the parabolic stabilizing the line spanned by $f_3$ in $V$.  For a standard section $\varphi_{s'} \in \ind_{Q(\A)}^{\Sp_6(\A)}(\delta_{Q}^{s'/6+ 1/2})$, let $E_{Q}(g,\varphi_{s'})$ denote the associated Eisenstein series.  We have the following identity.
\begin{proposition}\label{FTQ} Let $f_{s,w}$ denote a standard section in $\ind_{P(\A)}^{\GSp_6(\A)}(\chi_{s,w})$, and denote by $E(g,f_{s,w})$ the associated Eisenstein series.  There is a standard section $\varphi_{s'}$ as above so that one has the equality
\[\mathrm{Res}_{w=2} \mathrm{Res}_{s=1} E(g,f_{s,w}) = E_{Q}(g,\varphi_{s'})|_{s'=3/2}\]
for all $g \in \Sp_6(\A)$. \end{proposition}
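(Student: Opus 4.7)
The plan is to split the double residue into two successive steps, handling the inner $s=1$ residue via Lemma~\ref{s=1:lemma}, and then handling the outer $w=2$ residue via the first-term identity of Jiang~\cite[Proposition 3.1]{jiang2}, which has now become unconditional in our setting thanks to Proposition~\ref{prop:assume}.

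First, I would apply Lemma~\ref{s=1:lemma} to rewrite
\[\mathrm{Res}_{s=1}\, E(g, f_{s,w}) = E(g, w, r(f)),\]
where $r(f) \in \ind_{R(\A)}^{\GSp_6(\A)}(\chi_w')$ is a standard section. The resulting single-variable Eisenstein series is attached to the maximal parabolic $R$ of $\GSp_6$, whose intersection with $\Sp_6$ is $R' = R \cap \Sp_6$ with Levi factor $\GL_2 \times \Sp_2$. Restricting to $\Sp_6(\A)$, the character $\chi_w'|_{R'(\A)}$ is $|\det m_1|^{w+2}$ on the $\GL_2$ factor. Comparing with the normalization of Proposition~\ref{prop:assume}, where the principal series satisfies $f(pg) = |\det m_1|^{s + 5/2} f(g)$, the dictionary $s = w - \frac{1}{2}$ sends $w = 2$ to $s = \frac{3}{2}$, which lies inside the region $s > \frac{1}{2}$ governed by our Archimedean hypothesis.

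Second, I would invoke Jiang's first-term identity~\cite[Proposition 3.1]{jiang2}, which for a general symplectic group relates a pole of a degenerate Eisenstein series associated to the parabolic stabilizing a two-dimensional isotropic subspace (our $R'$) to the specialization of a Klingen-type Eisenstein series (our $E_Q$) at an appropriate point. Jiang's identity is stated conditionally on an Archimedean irreducibility hypothesis which, in our setting, is precisely Proposition~\ref{prop:assume}. Applied to the section $r(f)$, it produces a standard section $\varphi_{s'} \in \ind_{Q(\A)}^{\Sp_6(\A)}(\delta_Q^{s'/6 + 1/2})$ with
\[\mathrm{Res}_{w=2}\, E(g, w, r(f)) = E_Q(g, \varphi_{s'})|_{s' = 3/2},\]
and composing with the first step yields the claimed identity.

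The main obstacle is bookkeeping normalizations. One must carefully compute the modulus characters $\delta_R$, $\delta_{R'}$, and $\delta_Q$, so that the inductions defined in Section~\ref{subsec:global} align with the conventions used by Jiang; confirm that $w = 2$ corresponds to Jiang's residue point on the $R'$ side; and verify that the resulting section on the $Q$ side lives in $\ind_{Q(\A)}^{\Sp_6(\A)}(\delta_Q^{s'/6+1/2})$ at $s' = 3/2$ (rather than at some other shifted value). These reduce to explicit but unilluminating comparisons of half-integer exponents, so the genuine conceptual content of the proof is the two-step decomposition of the double residue together with the application of Jiang's theorem enabled by Proposition~\ref{prop:assume}.
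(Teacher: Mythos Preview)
Your proposal is correct and follows essentially the same two-step approach as the paper: first apply Lemma~\ref{s=1:lemma} to compute the inner residue at $s=1$, then invoke Jiang's first-term identity \cite[Proposition~3.1]{jiang2} (the case $n=3$, $r=2$, $\ell=1$) for the residue at $w=2$, with Proposition~\ref{prop:assume} supplying the Archimedean hypothesis. Your additional discussion of the normalization bookkeeping is more detailed than the paper's own proof, which simply cites the relevant case of Jiang's result without spelling out the matching of parameters.
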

\begin{proof} The residue at $s=1$ of $E(g,f_{s,w})$ is computed in Lemma \ref{s=1:lemma}.  The residue at $w=2$ of the resulting function is given by Jiang's first term identity -- the case $n=3, r=2, \ell=1$ of Proposition 3.1 of \cite{jiang2}. Note that this proposition makes the assumption that \cite[Assumption/Proposition]{jiang2} holds; we verify this in Proposition \ref{prop:assume} above.
\end{proof}

\subsection{Poles of $L$-functions} \label{subsec:poles}

We prove the following result.
\begin{prop} \label{prop:poles}
Let $S$ be a set of primes consisting of the Archimedean place as well as all places where $\pi_1$ or $\pi_2$ is ramified.  Suppose that $L^S(w,\pi_2,\mathrm{Std})$ has a pole at $w=2$.  Then $L^S(s,\pi,\mathrm{Std}\times\mathrm{Spin})$ cannot have a pole at $s=1$.
\end{prop}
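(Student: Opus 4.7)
The plan is to argue by contradiction, combining Theorem \ref{thm:mainthm}, the first-term identity Proposition \ref{FTQ}, and the local non-vanishing statement Proposition \ref{prop:ramcalc}. Suppose both $L^S(w,\pi_2,\mathrm{Std})$ has a pole at $w=2$ and $L^S(s,\pi,\mathrm{Std}\times\mathrm{Spin})$ has a pole at $s=1$. After the shift of the variable $w$ introduced in Section \ref{subsec:res}, the former appears as a pole of $I^*_S(\phi_1,\phi_2,s,w)$ along the line $w=2$, so by Theorem \ref{thm:mainthm} the normalized global integral has simple poles along both $s=1$ and $w=2$. By Proposition \ref{prop:ramcalc} one may choose the data at places in $S$ so that the product of local integrals is nonvanishing at $(s,w)=(1,2)$, and consequently the double residue $\mathrm{Res}_{w=2}\mathrm{Res}_{s=1} I^*_S(\phi_1,\phi_2,s,w)$ is nonzero.

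Interchanging residues with integration and applying Proposition \ref{FTQ} (together with the values of the normalizing zeta factors of $E^*$ at $(s,w)=(1,2)$, all finite and nonzero there), the same double residue equals a nonzero constant multiple of
\[\mathcal{I}(\phi,\varphi):=\int_{H(\mathbf{Q})Z(\mathbf{A})\backslash H(\mathbf{A})}E_Q(g,\varphi_{3/2})\,\phi_1(g_1)\phi_2(g_2)\,dg,\]
where $\varphi_{3/2}$ is the section on the Klingen parabolic $Q\subseteq\Sp_6$ (stabilizing $\gen{f_3}$) attached to the original data. The contradiction will follow from the unconditional vanishing $\mathcal{I}(\phi,\varphi)=0$, which I prove by unfolding $E_Q$ along $H$. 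The $H(\mathbf{Q})$-orbits on $Q(\mathbf{Q})\backslash\Sp_6(\mathbf{Q})$ are in bijection with $H(\mathbf{Q})$-orbits on projective lines in $V=V_1\oplus V_2$, and a short analysis (analogous to Lemma \ref{orbitCalc}) yields three orbits, with representatives $\gen{f_1}$, $\gen{f_3}$, and $\gen{f_1+f_3}$. For each orbit the section $\varphi_{3/2}$ is invariant under a product of one or both of $U_{B_1}$ (the Borel unipotent of $\GL_2$) and the Klingen unipotent of $\GSp_4$; Fubini then replaces $\phi_1$ or $\phi_2$ by its constant term along a proper parabolic of the respective factor, which vanishes by cuspidality.

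The main technical point is the verification, in the open (mixed) orbit, that the full subgroup $U_{B_1}\times U_{\mathrm{Klingen}(\GSp_4)}\subseteq H$ lies inside the unipotent radical of the Klingen parabolic $Q_C\subseteq\Sp_6$ stabilizing $\gen{f_1+f_3}$, not merely inside the set-stabilizer of that line.  This reduces to the standard description of a Klingen unipotent radical: its elements fix the distinguished isotropic vector and act trivially on the one-dimensional quotient $V/W^\perp$ (with $W=\gen{f_1+f_3}$), while the action on the four-dimensional space $W^\perp/W$ is unconstrained.  Both factors $U_{B_1}$ and $U_{\mathrm{Klingen}(\GSp_4)}$ manifestly satisfy these two conditions, so the unipotent invariance is as claimed.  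Once this is in hand, the cuspidality argument applies uniformly across all three orbits and gives $\mathcal{I}(\phi,\varphi)=0$, contradicting the nonvanishing of the double residue and completing the proof.
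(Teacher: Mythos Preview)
Your approach is essentially the paper's: contradiction via Theorem~\ref{thm:mainthm} and Proposition~\ref{prop:ramcalc}, then Proposition~\ref{FTQ}, then unfolding $E_Q$ into three $H$-orbits on lines and killing each by cuspidality. Two points need tightening.

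First, Proposition~\ref{FTQ} is stated only for $g\in\Sp_6(\A)$, so you cannot directly ``interchange residues with integration'' in the integral over $H(\Q)Z(\A)\backslash H(\A)$. The paper inserts an extra step: write the $H$-integral as an outer integral over the similitude $t(a)=\diag(1,1,1,a,a,a)$ and an inner integral over $(\SL_2\times\Sp_4)(\Q)\backslash(\SL_2\times\Sp_4)(\A)$; if the full integral has nonzero double residue, some value of $a$ does, and after translating all data by $t(a)$ one may apply Proposition~\ref{FTQ} to the inner $\Sp_6$-integral. You should include this reduction.

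Second, your description of ``the Klingen unipotent radical'' of $Q_C\subset\Sp_6$ is wrong: in the unipotent radical the induced action on $W^\perp/W$ is \emph{trivial}, not unconstrained, and indeed $U_{B_1}$ does not lie in that unipotent radical (e.g.\ $(u-1)(e_1-e_3)=u f_1\notin\gen{f_1+f_3}$). What you have actually described, and what you need, is the kernel of the inducing character on $Q_C$, namely the subgroup fixing the vector $f_1+f_3$ (equivalently $\Sp(W^\perp/W)\ltimes U_{Q_C}$). Both $U_{B_1}$ and the Klingen unipotent of $\GSp_4$ fix $f_1+f_3$, so they lie in this kernel and the section is invariant under them; cuspidality of either $\phi_1$ or $\phi_2$ then kills the mixed orbit, as in the paper. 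Fix the terminology and the argument goes through.
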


We begin with the proof of the following result, which controls the local integrals $I(W_1,W_2,s,w)$ at the Archimedean place, and at bad finite places.
\begin{prop} \label{prop:ramcalc}
For any finite place $v$ and any $\pi$ on $\GL_2 \boxtimes \GSp_4$ with trivial central character, the data of $\phi_{1,v},\phi_{2,v},$ and $f_v(g,s,w)$ can be selected to make the local integral $I(W_1, W_2,s,w)$ defined by (\ref{eqn:localint}) constant and nonzero.  If $v$ is Archimedean, the local integral $I(W_1, W_2,s,w)$ has meromorphic continuation in $s$ and $w$.  If $s_0,w_0 \in \mathbf{C}$ are given, it is possible to choose $\phi_{1,v},\phi_{2,v},$ and $f_v(g,s,w)$ so that $I(W_1, W_2,s,w)$ is non-zero at $(s_0,w_0)$.
\end{prop}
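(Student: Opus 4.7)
The plan is to establish each of the three assertions by a ``bump function'' argument that localizes the integrand of (\ref{eqn:localint}) onto a small set, reducing the computation to a volume calculation. Both the finite and Archimedean cases proceed along parallel lines.

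For a finite place $v$, let $K_v = \GSp_6(\mathcal{O}_{F_v})$ and fix a small compact open subgroup $\Omega \subseteq K_v$ containing the identity. Define $f_v(g,s,w)$ via the Iwasawa decomposition $\GSp_6(F_v) = P(F_v) \cdot K_v$ by declaring $f_v|_{K_v}$ to be the characteristic function of $\Omega$; the dependence of $f_v$ on $(s,w)$ then comes entirely from the transformation rule under $P(F_v)$. Next, choose cusp forms $\phi_1, \phi_2$ whose associated Whittaker functions $W_1, W_2$ (defined by (\ref{eqn:whit1}) and (\ref{eqn:whit2})) are supported, via the Iwasawa decomposition of $H(F_v)$, on the product of a small neighborhood of the identity in the respective tori modulo $Z$ with a small neighborhood of the identity in the corresponding maximal compact subgroup, and so that $W_1, W_2$ equal $1$ on their supports; this is possible by the density of such Whittaker functions in the local Whittaker model and by choosing vectors of sufficiently high depth. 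With these data the integrand of (\ref{eqn:localint}) is supported on a single small compact piece of $E(f_3)'(F_v) E(f_2)(F_v) Z(F_v) \backslash H(F_v)$ on which it equals a nonzero constant independent of $(s,w)$. The integral thus equals the volume of this piece times a nonzero constant.

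For the Archimedean place, the same localization strategy produces the nonvanishing statement at any prescribed $(s_0, w_0)$. By Dixmier--Malliavin, $W_1, W_2$ can be taken to be smooth functions compactly supported in prescribed neighborhoods of the identity in the respective tori modulo centers; similarly, the section $f_\infty(g, s, w)$ can be chosen so that its restriction to $K_\infty$ is an $(s,w)$-independent smooth test function supported in a small open neighborhood $\Omega \subseteq K_\infty$ of the identity. For such data the integrand of (\ref{eqn:localint}) is smooth and compactly supported in $g$ and depends holomorphically on $(s, w)$ through the $P$-equivariance of $f_\infty$; hence the local integral is entire in $(s,w)$, and by arranging the smooth bump data to be real-positive near a point of the support one ensures nonvanishing at $(s_0, w_0)$.

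For the meromorphic continuation statement at the Archimedean place with more general standard-section data, the approach is to establish absolute convergence of (\ref{eqn:localint}) in some open cone of $(s,w)$ via the gauge estimates of Casselman--Wallach for smooth Whittaker functions and the $P$-equivariance of $f_\infty$, and then to extend meromorphically by Bernstein's principle for families of distributions depending holomorphically on parameters. The main obstacle is the Archimedean convergence analysis: three noncompact factors (the two tori, together with the Iwasawa coordinate for the Eisenstein data) interact in the integral, so one must carry out the gauge bookkeeping carefully and identify a nonempty domain of convergence in $(s,w)$. Once this is done, both meromorphic continuation and the bump-data nonvanishing follow by the arguments sketched above.
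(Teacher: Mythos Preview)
Your bump-function strategy has a structural gap at the key step. You claim that after localizing $f_v|_{K_v}$ near the identity and localizing the Whittaker functions on the $H$-torus, the integrand of (\ref{eqn:localint}) is supported on a small compact piece on which it is independent of $(s,w)$. This fails. The section $f_v(\gamma_5 g,s,w)$, viewed as a function of $g\in H$, transforms under left translation by the full stabilizer $S_5=\mathrm{Stab}_5=H\cap\gamma_5^{-1}P\gamma_5$ via the character $\chi_{s,w}(\gamma_5\,\cdot\,\gamma_5^{-1})$, which genuinely depends on $(s,w)$. The domain of integration is $E(f_3)'E(f_2)Z\backslash H$, and by Lemma \ref{lem:gamma5stab} the quotient $E(f_3)'E(f_2)Z\backslash S_5$ is essentially a copy of $\mathrm{PGL}_2$; along this non-compact direction the section contributes a factor of the shape $|\mu(g)|^{s}$ that your localization cannot remove. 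Localizing $W_1,W_2$ on the $H$-torus does not collapse this direction either, because the torus inside $S_5$ sits diagonally across the $\GL_2$ and $\GSp_4$ factors and is not the torus you are localizing on. A secondary issue: the assertion that a $\GSp_4$-Whittaker function can be taken with torus-support in an arbitrarily small neighborhood of the identity is nontrivial, and Dixmier--Malliavin by itself does not produce compactly supported Whittaker functions.

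The paper confronts exactly this residual $S_5$-direction by isolating it as an inner integral
\[I'(v_1,v_2,s)=\int_{(S_5\cap U_{B_H})\backslash S_5'}|\mu(g)|^{s}L_1(g_1v_1)L_2(g_2v_2)\,dg\]
over a subgroup $S_5'\subset S_5$. At finite places $I'$ is made constant by explicit Fourier-theoretic choices of $v_1,v_2$ (convolving against Schwartz--Bruhat functions on one-parameter unipotent subgroups). At the Archimedean place, Dixmier--Malliavin is used not to manufacture compact support but to rewrite $I'$ as a finite sum of local Novodvorsky integrals for $\GL_2\times\GSp_4$, whose meromorphic continuation and nonvanishing are supplied by Soudry. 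The reduction of the full $I(W,f,s,w)$ to $I'$ is then achieved by taking $f$ to be a product of Godement-type sections built from Schwartz--Bruhat data on auxiliary $\GSp_6$-modules; a stabilizer computation (Claim \ref{claim:stab}) shows the outer integral collapses onto $I'$.
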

\begin{proof} For ease of notation, set $S_5 = \mathrm{Stab}_5 \subseteq H$, where $\mathrm{Stab}_5$ is as defined in the proof of Theorem \ref{unfolding}.  Since we are working locally, we also write only the name of a group for its points over the local field $\mathbf{Q}_v$.  We have $E(f_3)'E(f_2) = U_{B_H} \cap S_5$, where $U_{B_H}$ denotes the unipotent radical of the upper triangular Borel subgroup of $H$.  Thus the local integral to be computed is
\[I(W,f,s,w) = \int_{(U_{B_H} \cap S_5)Z \backslash H}{W(g)f'(g,s,w)\,dg}\]
where $W(g) = W_1(g_1)W_2(g_2)$ denotes the Whittaker function on $H$.

Denote by $S_5' \subseteq S_5$ the subgroup of $S_5$ consisting of those $g = (g_1,g_2) \in \GL_2 \boxtimes \GSp_4$ with $f_2 g_2 = f_2$.  In matrices, $S_5'$ consists of elements of the form
\begin{equation}\label{S'eqn} g=(g_1,g_2) = \left( \left(\begin{array}{cc} a& -b \\ -c&d\end{array}\right), \left(\begin{array}{cccc}\delta &&& * \\&a&b&\\&c&d& \\ &&&1\end{array}\right)\right),\end{equation}
where $\delta = ad -bc =\mu(g)$.  Denote by $L_i$ a non-zero Whittaker functional on $\pi_{i,v}$ for $i \in \set{1,2}$.  The proof consists of two parts.  The first is an analysis of the integral
\[I'(v_1,v_2,s) = \int_{S_5 \cap U_{B_H} \backslash S_5'}{|\mu(g)|^{s} L_1(g_1 v_1) L_2(g_2 v_2)\,dg},\]
and the second will reduce the study of the integral $I(W,f,s,w)$ to that of $I'(v_1,v_2,s)$.

Our analysis of the integral $I'(v_1,v_2,s)$ is given by the following lemma.
\begin{lemma}\label{I'lem} If $v$ is finite, there exists data $v_1, v_2$ so that $I'(v_1,v_2,s)$ is a nonzero constant independent of $s$.  If the place $v$ is Archimedean, then the integral $I'(v_1,v_2,s)$ has meromorphic continuation in $s$, and this meromorphic continuation is continuous in the choice of vectors $v_1, v_2$.  Furthermore, with appropriate data, it can be made nonzero at any fixed $s_0$. \end{lemma}  
\begin{proof} We deduce the Archimedean part of the lemma from an analogous result for Novodvorsky's $\GL_2 \times \GSp_4$ integral \cite{novodvorsky}.  For the finite places, we prove the statement of the lemma directly.

Suppose first that $\Q_v = \R$.  The local integral considered in \cite{novodvorsky} is
\[J(v_1,v_2,s) = \int_{U_2 \times U_2 \backslash \GL_2 \boxtimes \GL_2}{\Phi_{2}((0,1)g_2)|\det(g)|^{s}L_1'(g_1 v_1)L_2(g v_2)\,dg}.\]
Here, $g = (g_1,g_2)$ is an element of $\GL_2 \boxtimes \GL_2$, which is embedded in $\GSp_4$ by choosing a splitting of the four-dimensional symplectic space $W_4$ into a direct sum $W_2 \oplus W_2$ of two-dimensional symplectic subspaces, $\Phi_2$ denotes a Schwartz-Bruhat function on the second $W_2$ factor, $U_2$ is the unipotent radical of the upper-triangular Borel subgroup of each $\GL_2$, and $L_1'$ denotes the Whittaker functional of $\pi_{1,v}$ on $\GL_2$ that transforms by the character $\mm{1}{x}{}{1} \mapsto \psi(-x)$.

Set $\epsilon = \mm{-1}{}{}{1}$.  By uniqueness of the Whittaker functional, we have $L_1'(v_1) = L_1(\epsilon v_1).$  For $h \in \GL_2$, set $j(h) = \epsilon h \epsilon^{-1}$.  Then $j(\mm{a}{b}{c}{d}) = \mm{a}{-b}{-c}{d}$.  It follows that
\begin{align}\label{Jepint} \nonumber J(\epsilon^{-1}v_1, v_2,s) &= \int_{U_2 \times U_2 \backslash \GL_2 \boxtimes \GL_2}{\Phi_{2}((0,1)g_2)|\det(g)|^{s}L_1(j(g_1) v_1)L_2(g v_2)\,dg} \\ \nonumber &= \int_{B_2' \backslash \GL_2}{\Phi_{2}((0,1)g_2)\int_{ U_2 \times U_2 \backslash \GL_2 \boxtimes B_2'}{|\mu(r)|^{s}L_1(j(r_1)v_1)L_2(r gv_2)\,dr}\,dg} \\ &= \int_{B_2' \backslash \GL_2}{\Phi_{2}((0,1)g_2)I'(v_1,gv_2,s)\,dg_2}.\end{align}
Here $B_2' \subseteq \GL_2$ is the subgroup consisting of matrices of the form $\mm{*}{*}{0}{1}$.

Now we apply the well-known result of Dixmier-Malliavin \cite{dixmierMalliavin} to the group $\SL_2(\Q_v)$ acting on the vector $v_2$, where $\SL_2$ is embedded in $\Sp_4$ via its action on the vectors $e_2, f_2$.  In particular, we have that $v_2 = \sum_{j}{\varphi_j * v_{2,j}}$, where the sum is finite, the $v_{2,j}$ are smooth vectors in the space of $\pi_{2,v}$, and $\varphi_j$ is in $C^{\infty}_{c}(\SL_2(\Q_v))$.  We obtain that
\begin{equation} \label{eqn:iprimesum} I'(v_1,v_2,s) = \sum_{j} \int_{U_2 \backslash \SL_2}{\xi_j(g) I'(v_1, gv_{2,j},s)\,dg},\end{equation}
where
\[\xi_j(g) = \int_{\Q_v}{\varphi_j(\mm{1}{u}{}{1}g)\,du}.\]
But the functions $\xi_j(g)$ are easily seen to be of the form $\Phi_j((0,1)g)$ for compactly supported Schwartz-Bruhat functions $\Phi_j$ on the $1 \times 2$ row vectors $W_2$.  Hence the integral $I'(v_1,v_2,s)$ can be written as a finite sum of integrals of the form $J(v_1, v_2',s)$.  We conclude that the integrals $I'(v_1,v_2,s)$ have meromorphic continuation, which is continuous in the parameters, since the same is true of the integrals $J(v_1,v_2,s)$ by \cite[Theorem A]{soudryArch}.  Furthermore, if this meromorphic continuation vanishes at some fixed $s = s_0$ for all choices of data $v_1, v_2$, then by (\ref{Jepint}) the same vanishing would hold for $J(v_1,v_2,s)$.  However, by \cite[Proposition, \S 7.2]{soudryRSAxB} and the same non-vanishing statement for the classical Rankin-Selberg convolution on $\GL_2 \times \GL_2$ \cite[\S 17, \S 18]{jacquetRS}, this latter integral can be made nonvanishing at $s_0$.

Now assume that $v$ is a finite place.  Note that if $g =(g_1,g_2) \in S_5'$ has $g_1 = \mm{a}{*}{0}{1}$, then $L_1(g_1 u_1(x) v_1) = \psi(ax)L_1(g_1 v_1)$.  Now suppose $\xi_1$ is a Schwartz-Bruhat function on $\mathbf{G}_a$, and define
\[v_1' = \int_{F}{\xi_1(z)u_1(z) \cdot v_1\,dz}.\]
Then $L_1(g v_1') = \widehat{\xi}(a) L_1(g v_1)$, where $\widehat{\xi_1}$ is the Fourier transform of $\xi_1$, and we still assume $g = (g_1,g_2)$ has $g_1  = \mm{a}{*}{0}{1}$.

Next, note that if $g \in S_5'$, then in the notation of (\ref{S'eqn}), $L_2(  g u_2(x,y) v_2) = \psi(-cy+dx)L_2(g v_2).$  Hence, if $\xi_2$ is a Schwartz-Bruhat function on $\mathbf{G}_a^2$, and
\[v_2' = \int_{F^2}{\xi_2(x,y) u_2(x,y) \cdot v_2\,dx\,dy},\]
 then $L_2( g \cdot v_2') = \widehat{\xi_2}(c,d) L_2 (g \cdot v_2)$, where $\widehat{\xi_2}$ is an appropriate Fourier transform.

Choose $v_1, v_2$ so that $L_1(v_1) \neq 0$ and $L_2(v_2) \neq 0$.  Choose $\xi_1, \xi_2$ so that $\widehat{\xi_1}$ is nonnegative and supported very close to $1$, and $\widehat{\xi_2}$ is nonnegative and supported very close to $(0,1)$.  Let $v_1', v_2'$ be as above.  Denote by $S_5'' \subseteq S_5'$ the subgroup consisting of those $g=(g_1,g_2) \in S_5'$ with $g_1$ of the form $\mm{*}{*}{0}{1}$.  Then we have
\begin{align*}
  I'(v_1',v_2',s) &= \int_{S_5 \cap U_{B_H} \backslash S_5'}{|\mu(g)|^{s} \widehat{\xi_2}(c,d) L_1(g_1 v_1') L_2(g_2 v_2)\,dg}\\
  &= C \int_{S_5 \cap U_{B_H} \backslash S_5''}{|\mu(g)|^{s} L_1(g_1 v_1') L_2(g_2 v_2)\,dg} \\
	&=  C \int_{S_5 \cap U_{B_H} \backslash S_5''}{|\mu(g)|^{s} \widehat{\xi_1}(a) L_1(g_1 v_1) L_2(g_2 v_2)\,dg} \\
	&= C' L_1(v_1) L_2(v_2).
\end{align*}
Here $C, C'$ are positive constants that come from the measures of compact open subgroups.  It follows that when the place $v$ is finite, we can choose data $v_1', v_2'$ so that $I'(v_1',v_2',s)$ is constant nonzero independent of $s$.  This completes the proof of the lemma.
\end{proof}

We now consider again the entire local integral $I(W,f,s,w)$.  To reduce the study of this integral to that of $I'(v_1,v_2,s)$, we will carefully choose the Eisenstein section $f(g,s,w)$.  

First, note that there is a surjective map $\wedge^3 W_6 \otimes \mu^{-1} \rightarrow W_6$ of representations of $\GSp_6$; denote its kernel by $W_{14}$.  As a representation of $H = \GL_2 \boxtimes \GSp_4$, $W_{14} = W_4 \oplus W_2 \otimes V_5$, where $V_5 \subseteq \wedge^2 W_4 \otimes \mu^{-1}$ is the $5$-dimensional irreducible representation of $\PGSp_4$.  Moreover, $f_1 \wedge f_2 \wedge f_3 \in W_{14}$.  If $\Phi_{14}$ is a Schwartz-Bruhat function on $W_{14}$, and $g \in \GSp_6(F)$, set
\[f_{(3,3)}(g,s,\Phi_{14}) = |\mu(g)|^{s} \int_{\GL_1(F)}{\Phi_{14}(t f_1 \wedge f_2 \wedge f_3 g)|t|^{2s}\,dt}.\]
Then $f_{(3,3)}(g,s,\Phi_{14})$ is a local section for a degenerate Siegel Eisenstein series on $\GSp_6$.

Similarly, suppose $\Phi_{12}$ is a Schwartz-Bruhat function on $V_{\GL_2} \otimes W_6$, a $12$-dimensional representation of $\GL_2 \times \GSp_6$, where $V_{\GL_2}$ is a space of column vectors on which $m \in \GL_2$ acts by left-multiplication by $m^{-1}$.  We identify $V_{\GL_2} \otimes W_6$ with $M_{2,6}(F)$, the $2 \times 6$ matrices over $F$, by letting $m \in \GL_2$ act by left multiplication by $m^{-1}$ and letting $g \in \GSp_6$ act by right multiplication.  Set
\[f_{(2,2,2)}(g,w-2s,\Phi_{12}) = |\mu(g)|^{w-2s} \int_{\GL_2(F)}{\Phi_{12}\left(h(0|0|1_2)g\right)|\det(h)|^{w-2s}\,dh}.\]
Then $f_{(2,2,2)}(g,w-2s,\Phi_{12})$ is a local section for a degenerate Eisenstein series associated to the parabolic $R$ on $\GSp_6$.  

For Schwartz-Bruhat functions as above, we set $f(g,s,w) = f_{(3,3)}(g,s,\Phi_{14})f_{(2,2,2)}(g,w-2s,\Phi_{12})$.  With this section, the local integral $I(W,f',s,w)$ is readily seen to become
\[\int_{(U_{B_H} \cap S_5) \backslash H \times \GL_2}{|\mu(g)|^{w-s}|\det(h)|^{w-2s}\Phi_{14}(f_1 \wedge f_2 \wedge f_3 \gamma_5 g)\Phi_{12}(h(0|0|1_2)\gamma_5 g)W(g)\,dg\,dh}.\]

Let $V_{26}= W_{14} \oplus W_{12}$ and let $\Phi_{26} = \Phi_{14}\otimes \Phi_{12}$ be a Schwartz-Bruhat function on $V_{26}$.  We define 
\[v_0 = (f_1 \wedge f_2 \wedge f_3 \gamma_5, (0|0|1_2)\gamma_5) \in V_{26}.\]
Let $\GL_2 \times H$ act on $V_{26}$ via $v = (v_{14}, v_{12}) \cdot (m,g) = (v_{14}g, m^{-1} v_{12} g)$.  Here we have $v_{14} \in W_{14}$, $v_{12} \in M_{2,6}$, $m \in \GL_2$, and $g \in H$.
\begin{claim}\label{claim:stab} The stabilizer of $v_0$ inside $\GL_2 \times H$ is $\{(m,g): g = (g_1,g_2) \in S_5', m = g_1\}$. \end{claim}
\begin{proof} One computes
\[f_1 \wedge f_2 \wedge f_3 \gamma_5 = (\omega_2-\omega_1) \wedge f_2 + (e_3 \wedge f_1 + f_3 \wedge e_1) \wedge f_2\]
where $\omega_1 = e_1 \wedge f_1$, $\omega_2 = e_2 \wedge f_2 + e_3 \wedge f_3$ are the symplectic forms of the embedded $\GL_2$ and $\GSp_4$.  Additionally, one has
\begin{equation}\label{phi12} \left(\begin{array}{cc|cc|cc} 0&0 &0 &0 &1&0 \\ 0&0&0&0&0&1\end{array}\right)\gamma_5 = \left(\begin{array}{cc|cc|cc} 1&0 &-1 &0 &0&0 \\ 0&0&0&1&0&1\end{array}\right).\end{equation}
Using the outer columns of the right-hand side of (\ref{phi12}), one concludes that if $(m,g)$ stabilizes $v_0$, then $m = g_1$.  Moreover, the middle columns of (\ref{phi12}) imply that the action of $g_2$ on $\gen{e_3, f_3}$ must be by the matrix $j(m)=j(g_1)$, where we recall that $j$ is the map $\mm{a}{b}{c}{d} \mapsto \mm{a}{-b}{-c}{d}$ on $2 \times 2$ matrices.  The remainder of the computation is straightforward. \end{proof}

Applying Claim \ref{claim:stab}, we deduce that
\begin{align}\label{IYint} \nonumber &I(W,f',s,w) = \int_{(U_{B_H} \cap S_5) \backslash \GL_2 \times H}{|\mu(g)|^{w-s}|\det(m)|^{2s-w}\Phi_{26}(v_0(m,g)) W(g)\,dg\,dm} \\  &= \int_{ S_5'\backslash \GL_2 \times H}{ |\mu(g) \det(m)^{-1}|^{w-2s}\Phi_{26}(v_0(m,g))|\mu(g)|^s \int_{S_5 \cap U_{B_H} \backslash S_5'}{|\mu(r)|^{s}W(rg)\,dr}\,dg\,dm}.\end{align}

Suppose the place $v$ is finite, and denote by $K$ an open compact subgroup of $H$ sufficiently small so that $W$ is right-invariant by $K$.  Denote by $K'$ a small open compact subgroup of $\GL_2$.  It is not difficult to see that we can choose non-negative, compactly supported Schwartz-Bruhat data $\Phi_{14}, \Phi_{12}$ so that $\Phi_{26}(v_0(m,g)) \neq 0$ implies $m^{-1}g_1 \in K'$ and $g \in S_5' K$.  With such data, it follows that $I(W,f',s,w) = C I'(v_1,v_2,s)$ for some positive constant $C$.  Hence by Lemma \ref{I'lem}, we have proved the proposition in the case that the place $v$ is finite.

Consequently, since the global Rankin-Selberg convolution has meromorphic continuation in $s,w$, and we already know that the partial $L$-functions $L^S(\pi,\mathbf{1} \times \mathrm{Std},w)$, $L^{S}(\pi,\mathrm{Std} \times \mathrm{Spin},s)$ have meromorphic continuation, we deduce that the Archimedean integral $I(W,f',s,w)$ has meromorphic continuation in $s,w$.  Now suppose that the meromorphic continuation of $I(W,f',s,w)$ to $s=s_0, w= w_0$ is $0$ for all choices of data $v_1, v_2$.  Then by (\ref{IYint}), since $\Phi_{26}$ can be an arbitrary smooth compactly supported function, we obtain that $I'(v_1,v_2,s)$ vanishes at $s=s_0$ for all $v_1,v_2$.  But by Lemma \ref{I'lem}, this cannot happen.  Hence the proposition is proved. \end{proof}

Using this, we may now give the application of the integral representation to the poles of the $L$-functions.
\begin{proof}[Proof of Proposition \ref{prop:poles}]
Assume that $L^S(w,\pi_2,\mathrm{Std})$ has a pole at $w=2$ and $L^S(s,\pi,\mathrm{Std}\times\mathrm{Spin})$ has a pole at $s=1$.  We will derive a contradiction.

Using the case $s_0=1$, $w_0=2$ of Proposition \ref{prop:ramcalc}, there exist $\phi_1\in \pi_1,\phi_2\in \pi_2$, and a section $f_{s,w}(g)$ for the Eisenstein series $E(g,s,w,f_{s,w})$ so that the integral
\[I(g,\phi_1,\phi_2,s,w,f_{s,w}) =_{(s_0,w_0)} L^S(w,\pi_2,\mathrm{Std})L^S(s,\pi,\mathrm{Std}\times\mathrm{Spin}),\]
where $=_{(*)}$ means that the quotient of the two sides is non-vanishing and holomorphic near the value $(s_0,w_0)$.  (Note that the normalizing factors have no poles or zeroes at $(1,2)$.)  Then by hypothesis, $I(g,\phi_1,\phi_2,s,w,f_{s,w})$ has poles along $s=1$ and $w=2$.

After forming an outer integration over the similitude, we have
\[\int_{ \mathbf{A}^{\times 2}\mathbf{Q}^\times\backslash \mathbf{A}^\times}\int_{(\SL_2\boxtimes \Sp_4)(\mathbf{Q})Z(\mathbf{A}) \backslash (\SL_2\boxtimes \Sp_4)(\mathbf{A})} E(gt(a),w,f_{s,w})\phi_1(g_1t(a))\phi_2(g_2t(a))dg,\]
where $t = \diag(1,1,1,a,a,a)$.  If the residue of the inner integral at $s=1$ and $w=2$ is zero for all values of $t \in \mathbf{A}^\times$, it follows that $L^S(w,\pi_2,\mathrm{Std})$ cannot have a pole at $w=2$ while $L^S(s,\pi,\mathrm{Std}\times\mathrm{Spin})$ has a pole at $s=1$.  Thus, there is such a $t$ for which the inner integral has a nonzero residue at $s=1$ and $w=2$.  By translating all of the data of $\GSp_6(\A)$ and $H(\A)$ by $t$, we deduce that there is a standard section $f'_{s,w}$ and cusp forms $\phi_1', \phi_2'$ so that the integral
\[\int_{(\SL_2\boxtimes \Sp_4)(\mathbf{Q})\backslash (\SL_2\boxtimes \Sp_4)(\mathbf{A})} E(g,w,f'_{s,w})\phi_1'(g_1)\phi_2'(g_2)dg\]
has a nonzero residue at $s=1$ and $w=2$.

Now we apply Proposition \ref{FTQ}.  Applying this proposition, we see that to conclude the proof, it suffices to check that
\[\int_{(\GL_2\boxtimes \GSp_4)(\mathbf{Q})Z(\mathbf{A}) \backslash (\GL_2\boxtimes \GSp_4)(\mathbf{A})} E_Q(g,s',\varphi_{s'})\phi_1(g_1)\phi_2(g_2)dg\]
vanishes identically for all $\varphi_{s'}$.  It is easy to see that the action of $(\GL_2\boxtimes \GSp_4)(\mathbf{Q})$ on lines has three orbits represented by $\gen{f_1}$, $\gen{f_2}$, and $\gen{f_1+f_2}$.  The terms of the unfolded integral corresponding to $\gen{f_1}$ and $\gen{f_2}$ vanish by cuspidality of $\pi_2$.  For $\gen{f_1+f_2}$, observe that the stabilizer includes the unipotent radical of the Klingen parabolic of $\GSp_4$ stabilizing $\gen{f_2}$, so the integral corresponding to this orbit also vanishes by cuspidality.
\end{proof}

\subsection{The integral of Bump-Friedberg-Ginzburg} \label{subsec:nopole}

In this section, we explain why Proposition \ref{prop:poles} can be deduced from the multivariate integral of Bump-Friedberg-Ginzburg \cite{bfg2}.
\begin{prop} \label{prop:nopole}
Suppose that $\pi$ is a generic cuspidal automorphic representation of $\GSp_4(\mathbf{A})$.  Then for any finite set $S$, the Standard $L$-function $L^S(\pi,\mathrm{Std},s)$ of $\pi$ has no pole at $s=2$.
\end{prop}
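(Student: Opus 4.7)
The plan is to mirror the strategy used to prove Proposition \ref{prop:poles}, but replacing the two-variable Eisenstein series on $\GSp_6$ with the two-variable Eisenstein series used by Bump, Friedberg, and Ginzburg in their $\GSp_4$ integral. That integral, applied to a generic cusp form $\phi \in \pi$, represents a product of the form $L^S(s,\pi,\mathrm{Std})\,L^S(w,\pi,\mathrm{Spin})$ up to normalizing zeta factors and local integrals at places in $S$. Since the Spin $L$-function is entire in $w$ away from the center (by Novodvorsky), at a generic value $w_0$ the partial $L$-function $L^S(w,\pi,\mathrm{Spin})$ is nonzero and finite, so poles of the integral in $s$ at $s=2$ are detected by poles of $L^S(s,\pi,\mathrm{Std})$.

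Assume for contradiction that $L^S(s,\pi,\mathrm{Std})$ has a pole at $s=2$. The first step is to establish an analogue of Proposition \ref{prop:ramcalc} for the BFG integral on $\GSp_4$: for any fixed $(s_0,w_0)$ in a neighborhood of $(2,w_0)$, one may choose data so that the product of local integrals at Archimedean and ramified places is non-zero there. Together with the factorization of the global integral, this produces a choice of $\phi$ and of Eisenstein section making the global BFG integral have a pole at $s=2$ for generic $w$ near $w_0$.

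The second step is a residue analysis for the two-variable Eisenstein series of BFG at $s=2$, entirely parallel to the use of Proposition \ref{FTQ} in the proof of Proposition \ref{prop:poles}. The point is that the residue (taken in the $s$-variable only) of this Eisenstein series at $s=2$ is a residual Eisenstein series whose inducing parabolic is smaller. Unfolding this simpler Eisenstein series against $\phi$ produces a sum over orbits of unipotent subgroups acting on isotropic lines; the stabilizer of each orbit contains the unipotent radical of a proper parabolic of $\GSp_4$, so every term vanishes by the cuspidality of $\pi$. This contradicts the non-vanishing of the residue coming from the assumed pole of $L^S(s,\pi,\mathrm{Std})$ at $s=2$.

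The main obstacle is carrying out the residue/first-term identification cleanly: one must identify the residue at $s=2$ of the specific two-variable Eisenstein series in \cite{bfg2} as an honest Eisenstein series on $\GSp_4$ (or a closely related group) associated to a proper parabolic, and then verify an orbit calculation analogous to Lemma \ref{orbitCalc} showing that every orbit's stabilizer meets $\GSp_4$ in something containing a cuspidality-killing unipotent. The residue computation is the technical heart; once available, the cuspidality argument is formal and exactly as in the last paragraph of the proof of Proposition \ref{prop:poles}.
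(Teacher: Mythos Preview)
Your overall strategy is the same as the paper's---use the Bump--Friedberg--Ginzburg integral on $\GSp_4$, take a residue in the Standard variable, and kill the result with cuspidality---but you have misidentified the structure of the BFG integral and, as a result, vastly overcomplicated the residue step.

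The BFG integral on $\GSp_4$ is \emph{not} built from a single two-variable Eisenstein series on a non-maximal parabolic (as in this paper's $\GSp_6$ integral). It is a product of two \emph{separate} degenerate Eisenstein series,
\[
I^*(\phi,s,w) = \int_{\GSp_4(\mathbf{Q})Z(\mathbf{A})\backslash \GSp_4(\mathbf{A})} \phi(g)\,E_P^*(g,s,f_s)\,E_Q^*(g,w,\varphi_w)\,dg,
\]
where $P$ is the Siegel parabolic and $Q$ the Klingen parabolic, and this represents $L^S(\pi,\mathrm{Std},3s-1)L^S(\pi,\mathrm{Spin},2w-\tfrac{1}{2})$ (so a pole of the Standard $L$-function at $2$ corresponds to $s=1$). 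The point is that the residue of the \emph{Siegel} Eisenstein series $E_{P'}^*$ (restricted to $\Sp_4$) at its rightmost pole $s=1$ is a \emph{constant}---this is classical (e.g.\ Kudla--Rallis). No first-term identity, no residual Eisenstein series on a smaller parabolic, and no orbit calculation is needed. The residue of the global integral is therefore simply a nonzero constant times
\[
\int_{\Sp_4(\mathbf{Q})\backslash \Sp_4(\mathbf{A})} \phi(g)\,E_{Q'}^*(g,w,\varphi_w)\,dg,
\]
which vanishes immediately by cuspidality of $\phi$ (a cusp form integrated against a single degenerate Eisenstein series). Your ``main obstacle''---identifying the residue as an Eisenstein series for a smaller parabolic and then running an orbit analysis as in Lemma \ref{orbitCalc}---simply does not arise.
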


\begin{proof}
Let $\GSp_4$ be the symplectic group stabilizing the form on $\set{e_1,e_2,f_2,f_1}$ given by $\gen{e_i,f_j} = \delta_{i,j}$ and all other pairings 0.  Let $P$ and $Q$ respectively denote the Siegel and Klingen parabolic subgroups stabilizing $\gen{f_1,f_2}$ and $\gen{f_2}$, respectively.  Write
\[I^*(\phi,s,w,f_s,\varphi_w) = \int_{\GSp_4(\mathbf{Q})Z(\mathbf{A}) \backslash \GSp_4(\mathbf{A})} \phi(g) E_P^*(g,s,f_s)E_Q^*(g,w,\varphi_w)dg,\]
where $\phi \in \pi$,
\[E_P^*(g,s,f_s) = \sum_{\gamma \in P(\mathbf{Q}) \backslash \GSp_4(\mathbf{Q})} f_s^*(\gamma g), \textrm{ and }E_Q^*(g,w,\varphi_w) = \sum_{\gamma \in Q(\mathbf{Q}) \backslash \GSp_4(\mathbf{Q})} \varphi_w^*(\gamma g).\]
The normalized sections used here are obtained by multiplying $f_s$ and $\varphi_w$ by the zeta factors $\zeta(6s-2)\zeta(3s)$ and $\zeta(4w)$, respectively.  By \cite[Theorem 1.2 and Proposition 1.3]{bfg}, we may choose the data so that we have
\[I^*(\phi,s,w,f_s,\varphi_w) = (*)L^S(\pi,\mathrm{Std},3s-1)L^S(\pi,\mathrm{Spin},2w-\frac{1}{2}),\]
where (*) is nonvanishing at $(1,w_0)$ for a given choice of $w_0 \in \mathbf{C}$.  So it suffices to check that $I^*(\phi,s,w)$ has no pole at $s=1$.  If there is such a pole, then using the argument from the proof of Proposition \ref{prop:poles}, there exists data for which
\[\int_{\Sp_4(\mathbf{Q})Z(\mathbf{A}) \backslash \Sp_4(\mathbf{A})} \phi(g) E_{P'}^*(g,s,f_s)E_{Q'}^*(g,w,\varphi_w)dg\]
has a pole, where $P' = P \cap \Sp_4$ and $Q' = Q \cap \Sp_4$.  The pole of $E_{P'}^*(g,s,f_s)$ at $s=1$ corresponds to the rightmost pole of the Siegel Eisenstein series, which is well-known to have constant residue.  (See \cite[Proposition 5.4.1]{kr}.)  Then the residue is identified with
\[\int_{\Sp_4(\mathbf{Q})Z(\mathbf{A}) \backslash \Sp_4(\mathbf{A})} \phi(g)E_{Q'}^*(g,w,\varphi_w)dg,\]
which obviously vanishes by cuspidality of $\pi$.
\end{proof}

\bibliography{integralRepnBib}

\begin{thebibliography}{10}

\bibitem{brion}
Michel Brion.
\newblock Invariants d'un sous-groupe unipotent maximal d'un groupe
  semi-simple.
\newblock {\em Ann. Inst. Fourier (Grenoble)}, 33(1):1--27, 1983.

\bibitem{bumpFriedberg}
Daniel Bump and Solomon Friedberg.
\newblock The exterior square automorphic {$L$}-functions on {${\rm GL}(n)$}.
\newblock In {\em Festschrift in honor of {I}. {I}. {P}iatetski-{S}hapiro on
  the occasion of his sixtieth birthday, {P}art {II} ({R}amat {A}viv, 1989)},
  volume~3 of {\em Israel Math. Conf. Proc.}, pages 47--65. Weizmann,
  Jerusalem, 1990.

\bibitem{bfg2}
Daniel Bump, Solomon Friedberg, and David Ginzburg.
\newblock Rankin-{S}elberg integrals in two complex variables.
\newblock {\em Math. Ann.}, 313(4):731--761, 1999.

\bibitem{bfg}
Daniel Bump, Masaaki Furusawa, and David Ginzburg.
\newblock Non-unique models in the {R}ankin-{S}elberg method.
\newblock {\em J. Reine Angew. Math.}, 468:77--111, 1995.

\bibitem{dixmierMalliavin}
Jacques Dixmier and Paul Malliavin.
\newblock Factorisations de fonctions et de vecteurs ind\'efiniment
  diff\'erentiables.
\newblock {\em Bull. Sci. Math. (2)}, 102(4):307--330, 1978.

\bibitem{garrett3}
Paul~B. Garrett.
\newblock Examples of function spaces.
\newblock {\em Unpublished manuscript}.

\bibitem{garrett}
Paul~B. Garrett.
\newblock Pullbacks of {E}isenstein series; applications.
\newblock In {\em Automorphic forms of several variables ({K}atata, 1983)},
  volume~46 of {\em Progr. Math.}, pages 114--137. Birkh\"auser Boston, Boston,
  MA, 1984.

\bibitem{garrett2}
Paul~B. Garrett.
\newblock On the arithmetic of {S}iegel-{H}ilbert cuspforms: {P}etersson inner
  products and {F}ourier coefficients.
\newblock {\em Invent. Math.}, 107(3):453--481, 1992.

\bibitem{gelbartPSR}
Stephen Gelbart, Ilya Piatetski-Shapiro, and Stephen Rallis.
\newblock {\em Explicit constructions of automorphic {$L$}-functions}, volume
  1254 of {\em Lecture Notes in Mathematics}.
\newblock Springer-Verlag, Berlin, 1987.

\bibitem{ginzh}
David Ginzburg and Joseph Hundley.
\newblock Multivariable {R}ankin-{S}elberg integrals for orthogonal groups.
\newblock {\em Int. Math. Res. Not.}, (58):3097--3119, 2004.

\bibitem{gpsr}
David Ginzburg, Ilya Piatetski-Shapiro, and Stephen Rallis.
\newblock {$L$}-functions for the orthogonal group.
\newblock {\em Mem. Amer. Math. Soc.}, 128(611):viii+218, 1997.

\bibitem{hs}
Joseph Hundley and Xin Shen.
\newblock A multi-variable {R}ankin-{S}elberg integral for a product of
  {$GL_2$}-twisted spinor {$L$}-functions.
\newblock {\em Monatsh. Math.}, 181(2):355--403, 2016.

\bibitem{ikeda}
Tamotsu Ikeda.
\newblock On the location of poles of the triple {$L$}-functions.
\newblock {\em Compositio Math.}, 83(2):187--237, 1992.

\bibitem{jacquetRS}
Herv\'e Jacquet.
\newblock {\em Automorphic forms on {${\rm GL}(2)$}. {P}art {II}}.
\newblock Lecture Notes in Mathematics, Vol. 278. Springer-Verlag, Berlin-New
  York, 1972.

\bibitem{jiang}
Dihua Jiang.
\newblock Degree {$16$} standard {$L$}-function of {${\rm GSp}(2)\times {\rm
  GSp}(2)$}.
\newblock {\em Mem. Amer. Math. Soc.}, 123(588):viii+196, 1996.

\bibitem{jiang2}
Dihua Jiang.
\newblock The first term identities for {E}isenstein series.
\newblock {\em J. Number Theory}, 70(1):67--98, 1998.

\bibitem{jz}
Dihua Jiang and Lei Zhang.
\newblock A product of tensor product {$L$}-functions of quasi-split classical
  groups of {H}ermitian type.
\newblock {\em Geom. Funct. Anal.}, 24(2):552--609, 2014.

\bibitem{kr}
Stephen~S. Kudla and Stephen Rallis.
\newblock A regularized {S}iegel-{W}eil formula: the first term identity.
\newblock {\em Ann. of Math. (2)}, 140(1):1--80, 1994.

\bibitem{krs}
Stephen~S. Kudla, Stephen Rallis, and David Soudry.
\newblock On the degree {$5$} {$L$}-function for {${\rm Sp}(2)$}.
\newblock {\em Invent. Math.}, 107(3):483--541, 1992.

\bibitem{mw}
C.~M\oe{}glin and J.-L. Waldspurger.
\newblock {\em Spectral decomposition and {E}isenstein series}, volume 113 of
  {\em Cambridge Tracts in Mathematics}.
\newblock Cambridge University Press, Cambridge, 1995.
\newblock Une paraphrase de l'\'Ecriture [A paraphrase of Scripture].

\bibitem{novodvorsky}
Mark~E. Novodvorsky.
\newblock Automorphic {$L$}-functions for the symplectic group {${\rm
  GSp}(4)$}.
\newblock In {\em Automorphic forms, representations and {$L$}-functions
  ({P}roc. {S}ympos. {P}ure {M}ath., {O}regon {S}tate {U}niv., {C}orvallis,
  {O}re., 1977), {P}art 2}, Proc. Sympos. Pure Math., XXXIII, pages 87--95.
  Amer. Math. Soc., Providence, R.I., 1979.

\bibitem{psr}
Ilya Piatetski-Shapiro and Stephen Rallis.
\newblock A new way to get {E}uler products.
\newblock {\em J. Reine Angew. Math.}, 392:110--124, 1988.

\bibitem{pss}
Ilya~I. Piatetski-Shapiro and David Soudry.
\newblock {$L$}\ and {$\varepsilon $}\ functions for {${\rm GS}{\rm p}(4)\times
  {\rm GL}(2)$}.
\newblock {\em Proc. Nat. Acad. Sci. U.S.A.}, 81(12, Phys. Sci.):3924--3927,
  1984.

\bibitem{soudry}
David Soudry.
\newblock The {CAP} representations of {${\rm GSp}(4,{\bf A})$}.
\newblock {\em J. Reine Angew. Math.}, 383:87--108, 1988.

\bibitem{soudryRSAxB}
David Soudry.
\newblock Rankin-{S}elberg convolutions for {${\rm SO}_{2l+1}\times{\rm
  GL}_n$}: local theory.
\newblock {\em Mem. Amer. Math. Soc.}, 105(500):vi+100, 1993.

\bibitem{soudryArch}
David Soudry.
\newblock On the {A}rchimedean theory of {R}ankin-{S}elberg convolutions for
  {${\rm SO}_{2l+1}\times{\rm GL}_n$}.
\newblock {\em Ann. Sci. \'Ecole Norm. Sup. (4)}, 28(2):161--224, 1995.

\bibitem{warner}
Garth Warner.
\newblock {\em Harmonic analysis on semi-simple {L}ie groups. {I}}.
\newblock Springer-Verlag, New York-Heidelberg, 1972.
\newblock Die Grundlehren der mathematischen Wissenschaften, Band 188.

\end{thebibliography}
\bibliographystyle{plain}

\end{document}